\numberwithin{equation}{section}
\theoremstyle{plain}%
\newtheorem{theorem}{Theorem}
\numberwithin{theorem}{section}
\newtheorem{proposition}[theorem]{Proposition}
\newtheorem{lemma}[theorem]{Lemma}
\newtheorem{corollary}[theorem]{Corollary}
\newtheorem{remark}[theorem]{Remark}
\newtheorem{conjecture}[theorem]{Conjecture}
\theoremstyle{definition}
\newtheorem{example}[theorem]{Example}
\newcommand{\C}{\mathbb{C}}
\newcommand{\Z}{\mathbb{Z}}
\newcommand{\PP}{\mathbb{P}}
\newcommand{\R}{\mathbb{R}}
\newcommand{\sP}{\mathcal{P}}
\newcommand{\sL}{\mathcal{L}}
\newcommand{\cE}{\mathcal{E}}
\newcommand{\cR}{\mathcal{R}}
\newcommand{\cN}{\mathcal{N}}
\newcommand{\OO}{\mathrm{O}}
\newcommand{\leaveout}[1]{}
\newcommand{\barX}{\overline{X}}
\newcommand{\sO}{\mathcal{O}}
\newcommand{\ED}{\mathrm{EDdegree}}
\newcommand{\aED}{\mathrm{aEDdegree}}
\newcommand{\corrR}[1]{\mathcal{E}_#1}
\newcommand{\Xsing}{X_\mathrm{sing}}
\newcommand{\Ysing}{Y_\mathrm{sing}}
\newcommand{\dd}{\mathrm{d}}
\newcommand{\TT}{J}
\newcommand{\dto}{\to}
\newcommand{\mymarginpar}[1]{\marginpar{\tiny #1}}
\newcommand{\generic}{general}
\newcommand{\tr}{\mathrm{tr}}
\date{}
\begin{document}

\title{\bf The Euclidean Distance Degree \\ of an Algebraic Variety}

\author{Jan Draisma, Emil Horobe\c{t},
Giorgio Ottaviani, \\
Bernd Sturmfels and Rekha R. Thomas}

\maketitle

\begin{abstract} \noindent
The nearest point map of a real algebraic variety
with respect to Euclidean distance is an algebraic function.
 For instance, for varieties of low rank matrices,
the Eckart-Young Theorem states that this map
is given by the singular value decomposition.
This article develops a theory of such
nearest point maps from the perspective of
computational algebraic geometry.
The Euclidean distance degree of a variety
is the number of critical points of the squared distance
to a \generic{} point outside the variety.
Focusing on varieties seen in applications, we  present
numerous tools for exact computations.
\end{abstract}

\noindent {\bf AMS subject classification:} 51N35, 14N10,
14M12, 90C26, 13P25.

\section{Introduction}

Many models in the sciences and engineering are expressed
as sets of real solutions to systems of polynomial equations in $n$ unknowns.
For such a  real algebraic variety $X \subset \R^n$, we consider the following problem:
given  $u \in \R^n$, compute
 $u^* \in X$ that minimizes the squared Euclidean distance
 $d_u(x) =   \sum_{i=1}^n (u_i-x_i)^2$ from
 the given point $u$. This  optimization
problem arises in a wide range of applications.
For instance, if
$u$ is a noisy sample from $X$,
where the error model is a standard Gaussian in $\R^n$,
then $u^*$ is the maximum likelihood estimate
for~$u$.

In order to find $u^*$ algebraically, we consider the set of solutions
in $\C^n$ to the equations defining $X$. In this manner, we
regard $X$ as a complex variety in $\C^n$,
and we examine all complex critical points of the squared
distance function  $d_u(x) =  \sum_{i=1}^n (u_i-x_i)^2$ on $X$.  Here we
only allow those critical points $x$ that are non-singular on $X$. The
number of such critical points is constant on a dense open subset of
data $u \in \R^n $.
That number is called the {\em Euclidean distance degree} (or ED degree)
of the variety $X$, and denoted as {\rm EDdegree}(X).

The ED degree of a variety $X$  measures the algebraic complexity of writing the optimal
solution $u^*$ of $d_u(x)$ over $X$. It is a function of the input data and an important invariant of the optimization problem. This paper describes the basic properties of the ED degree using tools from computational and classical algebraic geometry. In many situations, our techniques offer formulas for this invariant.  Our goal is to establish the foundations of ED
degree so that it can be specialized in specific instances to
solve the optimization problem.

Using Lagrange multipliers,
and the observation that  $\nabla d_u =  2(u-x) $,
our problem amounts to computing
all regular points $x \in X$ such that $u-x
= (u_1-x_1,\ldots,u_n-x_n) $ is
perpendicular to the tangent space $T_x X $ of $X$ at $x$.
Thus, we seek to solve the constraints
\begin{equation}
\label{eq:sys1}
x \in X \, , \,\,
x \not\in X_{\rm sing} \quad {\rm and} \quad
u-x \perp T_x X,
\end{equation}
where $X_{\rm sing}$ denotes the singular locus of $X$.
The ED degree of~$X$ counts the solutions $x$.

\begin{example} \label{ex:cardioid}
We illustrate our problem for
a plane curve.
Figure~\ref{fig:logo} shows the
{\em cardioid}
$$
X \,\, = \,\, \bigl\{ (x,y) \in \R^2\,:\,
(x^2+y^2+x)^2 = x^2 + y^2\bigr\} .
$$
For general data $(u,v)$ in $\R^2$,
the cardioid $X$ contains precisely three points $(x,y) $
whose tangent line is perpendicular to  $(u-x,v-y)$.
Thus ${\rm EDdegree}(X) = 3$. All three critical points
$(x,y)$ are real, provided $(u,v)$ lies outside the
{\em evolute}, which is the small inner cardioid
\begin{equation}
\label{eq:innercardioid}
 \bigl\{\, (u,v) \in \R^2\,:\,
27 u^4+54 u^2v^2+27 v^4+54 u^3+54 uv^2+36 u^2+9v^2+8u \,=\, 0 \, \bigr\} .
\end{equation}
The evolute
is called the {\em ED discriminant} in this paper.
If $(u,v)$ lies inside the evolute then two of the
critical points are complex, and the unique real solution
maximizes $d_u$.
 \hfill $\diamondsuit$
 \end{example}

\begin{figure}[h]
\begin{center}
\vskip -0.3cm
\includegraphics[scale=0.6]{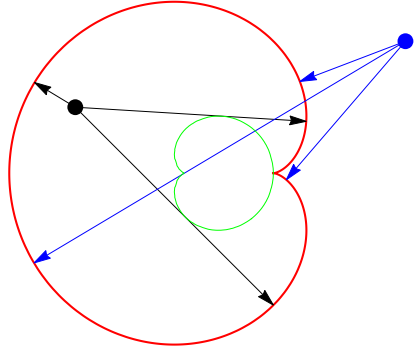}
\vskip -0.4cm
\caption{The cardioid has ED degree three. The inner cardioid is the ED discriminant.}
 \label{fig:logo}
\end{center}
\end{figure}

Readers familiar with algebraic statistics \cite{DSS}
may note that the ED degree
of a variety $X$ is  an additive analogue
of its {\em ML degree} (maximum likelihood degree).
 Indeed,
if $X$ represents a statistical model for discrete data then
maximum likelihood estimation leads to
polynomial equations which we can write
in a form that looks like (\ref{eq:sys1}),
with $u/x = (u_1/x_1,\ldots, u_n/x_n)$:
\begin{equation}
\label{eq:sys2}
x \in X \, , \,\,
x \not\in X_{\rm sing} \quad {\rm and} \quad
u/x \perp T_x(X).
\end{equation}
See Example~\ref{ex:twobytwo} and \cite{HKS, HS} for details.
Here, the optimal solution $\hat u$ minimizes the
Kullback-Leibler distance  from the distribution $u$
to the model $X$.
Thus, ED degree and ML degree are close cousins.

For most varieties  $X$ and most data $u$,
the number of real critical points
is much smaller than ${\rm EDdegree}(X)$. To quantify
that difference we also study the expected number of real critical points of $d_u$ on $X$.
This number, denoted ${\rm aEDdegree}(X)$
and called the {\em average ED degree},
 depends on the underlying probability distribution on $\R^n$.
 For instance, for the cardioid $X$ in
Example \ref{ex:cardioid}, the invariant ${\rm aEDdegree}(X)$ can be
any real number between $1$ and $3$.
The specific value
depends on how we sample the data points $(u,v)$ from $\R^2$.

This paper is organized as follows. In Section~\ref{sec:Equations} we
rigorously define ED degree for affine and projective varieties,
and show how the ED degree of $X$ and all critical points
of $d_u$ can be computed in practice.
The projective case  is important because many varieties
 in applications are defined by homogenous equations.
For the most part, our exposition assumes no prerequisites beyond
undergraduate mathematics. We follow the book by Cox, Little and O'Shea \cite{CLO},
and we illustrate the main concepts with code in {\tt Macaulay2} \cite{M2}.

Section \ref{sec:FirstApp} is devoted to case studies in
control theory,  geometric modeling, computer vision,
and low rank matrix completion.
New results include formulas for the ED degree for the
Hurwitz stability problem and for the number of
critical formations on the line, as in \cite{AH}.

In Section~\ref{sec:Correspondence} we introduce the {\em ED correspondence}
$\mathcal{E}_X$, which is the variety of pairs $(x,u)$ where $x \in X$ is critical for $d_u$.
The ED correspondence is of vital importance for the computation of
average ED degrees, in that same section. We show how to derive
parametric representations of $\mathcal{E}_X$, and how these
translate into  integral representations for  ${\rm aEDdegree}(X)$.

Duality plays a key role in both algebraic geometry
and  optimization theory \cite{RS}.  Every projective variety
$X \subset \PP^n$ has a dual variety $X^* \subset \PP^n$,
whose points are the hyperplanes tangent to $X$.
In  Section~\ref{sec:Duality} we prove that
${\rm EDdegree}(X) = {\rm EDdegree}(X^*)$,
we express this number as  the sum of the classical polar classes
\cite{Holme}, and we lift the ED correspondence to the
conormal variety of $(X,X^*)$. When
$X$ is smooth and toric, we obtain a combinatorial
formula for ${\rm EDdegree}(X)$ in terms of the
volumes of faces of the corresponding polytope.

In Section~\ref{sec:geomop} we study the
 behavior of the ED degree under linear projections and under
intersections with linear subspaces. We also examine the
fact that the ED degree can go up or can go down
 when passing from an affine
variety in $\C^n$ to its projective closure in $\PP^n$.

In  Section~\ref{sec:Chern} we express ${\rm EDdegree}(X)$
in terms of Chern classes when $X$ is smooth and projective,
and we apply this to classical Segre and Veronese varieties.
We also study the {\em ED discriminant} which is the locus
of all data points $u$ where
two critical points of $d_u$ coincide.
For instance, in Example \ref{ex:cardioid}, the
ED discriminant is the inner cardioid.
Work of Catanese, Trifogli 
and others \cite{CT, JP} offers degree formulas for  ED discriminants
in various situations.

As we will see in Example~\ref{ex:eckartyoung}, the ED degree of the
variety of bounded-rank matrices can be derived from the 
{\em Eckart-Young Theorem}. The singular value decomposition
furnishes the critical points.
The case of
multidimensional
{\em tensors}, while of equally fundamental importance, is much more
involved. In Section~\ref{sec:Tensors}, following \cite{DH,Friedland,FO},
we give an account of recent results on the ordinary and average ED
degree of the variety of rank one tensors.

Even though the problem of minimizing Euclidean distance to a variety 
arises in a number of applications, there seems to be no systematic study 
of this problem in the generality that we address here. This paper aims to 
lay down the foundations for solving this problem
  using tools from algebraic geometry and 
computational algebra. In Section~\ref{sec:FirstApp} we will see many 
formulas for the ED degree in specific instances, some of which are new 
while others reprove existing results in the literature using our tools and 
uniform framework. The remaining sections explore several different aspects 
of  the ED degree and offer many possible directions 
in which the general theory can be further developed or tailored to particular 
applications. Our ED degree umbrella brings under it a variety of applications and theoretical tools, and 
draws on previous work that relates to subjects such as the  
Catanese-Trifogli formula for the ED discriminant (Section~\ref{sec:Chern}) 
and the work of Piene and Holme on duality (Section~\ref{sec:Duality}).

\section{Equations defining critical points}
\label{sec:Equations}

An algebraic variety $X$ in $\R^n$ can be described either implicitly,
by a system of polynomial equations in $n$ variables, or parametrically,
as the closure of the image of a polynomial map $\psi : \R^m \rightarrow
\R^n$.  The second representation arises frequently in applications,
but it is restricted to varieties $X$ that are unirational. The first
representation exists for any variety $X$.  In what follows we
start with the implicit representation, and we derive
the polynomial equations that characterize the critical points of
the squared distance function $d_u = \sum_{i=1}^n(x_i-u_i)^2$ on~$X$.
The function $d_u$ extends to a polynomial function on $\C^n$. So,
 if $x$ is a complex point in $X$ then $d_u(x)$ is usually
a complex number, and that number can be zero even if $x \not= u$.
The Hermitian inner product and its induced metric
on $\C^n$ will not appear in this paper.

Fix a radical  ideal $I_X =\langle f_1, \ldots,f_s \rangle \subset
\R[x_1,\ldots,x_n]$ and $X = V(I_X)$ its variety  in $\C^n$.
Since ED degree is additive over  the components of $X$, we may assume
that $X$ is irreducible and that $I_X$ is a prime ideal.
The formulation (\ref{eq:sys1}) translates into a
system of polynomial equations as follows.  We write  $J(f)$ for the
$s \times n$ Jacobian matrix, whose entry in row $i$ and column $j$
is the partial derivative $\partial f_i /\partial x_j$.  The singular
locus $X_{\rm sing}$ of $X$ is defined by
$$ I_{X_{\rm sing}} \,\,= \,\,
I_X + \bigl\langle \hbox{$c \times c$-minors of  $J(f)$} \bigr\rangle , $$
where $c$ is the codimension of $X$. The ideal $I_{\Xsing}$ can in fact
be non-radical, but that does not matter for our purposes. We now augment
the Jacobian matrix $J(f)$ with the row vector $u-x$ to get an $(s+1)
\times n$-matrix. That matrix has rank $\leq c$ on the critical points
of $d_u$ on $X$. From the subvariety of $X$ defined by these
rank constraints we must remove  contributions from the singular locus
$X_{\rm sing}$.  Thus the {\em critical ideal} for
$u \in \C^n$ is the following saturation:
\begin{equation}
\label{eq:critideal}
\biggl( I_X + \biggl\langle
\hbox{$(c+1) \times (c+1) $-minors of $ \begin{pmatrix} u - x \\ J(f) \end{pmatrix} $} \biggr\rangle
\biggr) : \bigl(I_{X_{\rm sing}} \,\bigr)^\infty .
\end{equation}
Note that if $I_X$ were not radical, then the above ideal could have an empty variety.

\begin{lemma} \label{lem:EDd}
For \generic{} $u \in \C^n$, the variety of the  critical ideal in $\C^n$ is finite.
It consists precisely of the critical points of
the squared distance function $d_u$ on the manifold $X \backslash X_{\rm sing}$.
\end{lemma}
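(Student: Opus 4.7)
To prove the lemma I would proceed in two independent stages. First, identify set-theoretically the variety cut out by the critical ideal. Consider the variety of $I_X + \langle \text{$(c+1)\times(c+1)$ minors of the augmented Jacobian}\rangle$ before saturation. At any smooth point $x \in X \setminus \Xsing$ the Jacobian $J(f)(x)$ has rank exactly $c = \codim X$, so the rank of the augmented matrix drops to $\leq c$ precisely when $u - x$ lies in the row span of $J(f)(x)$; this row span is the normal space $N_x X$, so the condition is exactly $u - x \perp T_x X$. At a singular point $x \in \Xsing$ the Jacobian already has rank $<c$, so the rank inequality holds automatically and all of $\Xsing$ belongs to the pre-saturation variety. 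Saturating by $(I_{\Xsing})^\infty$ removes, set-theoretically, every irreducible component contained in $\Xsing$ (the possible non-radicality of $I_{\Xsing}$ is harmless, since saturation by $I^\infty$ and by $\sqrt{I}^\infty$ cut out the same underlying variety). Crucially, no smooth critical point is sacrificed, since by definition such a point does not lie in $\Xsing$.

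Second, for finiteness I would form the ED correspondence $\mathcal{E}_X \subset (X \setminus \Xsing) \times \C^n$ of pairs $(x,u)$ with $u - x \perp T_x X$. The first projection realizes $\mathcal{E}_X$ as a rank-$c$ affine bundle over $X \setminus \Xsing$ (the fibers being the affine normal spaces $x + N_x X$), whence $\dim \mathcal{E}_X = (n-c) + c = n$. It then suffices to show the second projection $\pi_2 \colon \mathcal{E}_X \to \C^n$ is dominant, for then the generic fiber-dimension theorem gives $\dim \pi_2^{-1}(u) = 0$ for $u$ in a Zariski-open subset of $\C^n$. Dominance can be checked on tangent spaces at a single point $(x_0, x_0)$ with $x_0 \in X \setminus \Xsing$ arbitrary: the tangent space to $\mathcal{E}_X$ at $(x_0, x_0)$ decomposes as $T_{x_0} X \oplus N_{x_0} X \cong \C^n$, and $d\pi_2$ sends $(v, w)$ to $v + w$, which is surjective.

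The main obstacle I expect is the careful bookkeeping around the saturation step, in particular verifying that the saturation removes exactly $\Xsing$ without losing any smooth critical points, so that the resulting variety is the Zariski closure of the set of smooth critical points. Combined with the zero-dimensionality from the second step, this closure equals the finite set of smooth critical points itself, and the lemma follows.
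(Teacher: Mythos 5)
Your first stage is fine: the unsaturated ideal cuts out, on $X\backslash\Xsing$, exactly the points with $u-x$ in the row span of $J(f)$, all of $\Xsing$ satisfies the rank condition automatically, and saturation removes precisely the components inside $\Xsing$; together with finiteness this identifies the variety of the critical ideal with the set of smooth critical points, which is what the paper's proof leaves implicit. The genuine problem is in your second stage, in the sentence ``the tangent space to $\mathcal{E}_X$ at $(x_0,x_0)$ decomposes as $T_{x_0}X\oplus N_{x_0}X\cong\C^n$, and $d\pi_2$ sends $(v,w)$ to $v+w$, which is surjective.'' Perpendicularity throughout is taken with respect to the complex symmetric bilinear form $x\cdot y=\sum_i x_iy_i$, not a Hermitian inner product, so $T_{x_0}X$ and $(T_{x_0}X)^\perp$ have complementary dimensions but need not intersect trivially, and their sum need not be $\C^n$. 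The isotropic quadric $Q=V(x_1^2+\cdots+x_n^2)$ is a counterexample: at any $x\in Q$ one has $T_xQ=x^\perp\ni x$, so $T_xQ\cap(T_xQ)^\perp=\C x\neq\{0\}$, the image of $d\pi_2$ at $(x,x)$ lies in $x^\perp\subsetneq\C^n$, and in fact $\pi_2$ is not dominant: ${\rm EDdegree}(Q)=0$. So dominance of $\pi_2$ is simply false for some varieties, and any proof of it must assume, as Theorem \ref{thm:EDaffine} does, that $T_xX\cap(T_xX)^\perp=\{0\}$ at some smooth point.

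The good news is that Lemma \ref{lem:EDd} does not need dominance, and this is exactly where the paper's proof is leaner than yours: it only uses your dimension count. Since the incidence variety of pairs $(x,u)$ is irreducible of dimension $n$ (your affine-bundle observation, which is also the paper's), its fiber over a \generic{} $u\in\C^n$ is finite --- zero-dimensional if $\pi_2$ is dominant, and empty if it is not, the latter case genuinely occurring (e.g.\ for $Q$), and in either case both assertions of the lemma hold. Dominance, equivalently positivity of the ED degree, is deferred in the paper to Theorem \ref{thm:EDaffine}, where it is proved only under the extra transversality hypothesis above. If you delete the dominance step and argue directly that fibers of an $n$-dimensional variety over \generic{} points of $\C^n$ are finite, your proof becomes correct and essentially coincides with the paper's.
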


\begin{proof}
For fixed  $x \in X \backslash X_{\rm sing}$, the
Jacobian $J(f)$ has rank $c$, so the data points $u$ where
the $(c{+}1) \times (c{+}1)$-minors of
$\begin{pmatrix} u-x \\ J(f) \end{pmatrix}$
vanish form an affine-linear subspace in $\C^n$ of dimension $c$.
Hence the variety of pairs $(x,u) \in X \times \C^n$ that are zeros of (\ref{eq:critideal})
is irreducible of dimension $n$. The fiber of its projection into
the second factor over a \generic{} point $u \in \C^n$ must hence be finite.
\end{proof}

The {\em ED degree} of $X$ is defined to be the number of critical points
in Lemma \ref{lem:EDd}. It is the same
as the {\em normal class} of $X$ defined in \cite{JPb}, where it is studied
in detail for curves and surfaces.
We start with two examples that are familiar to all
 students of applied mathematics.

\begin{example}[Linear Regression]
Every linear space $X$ has ED degree $1$. Here the critical equations
(\ref{eq:sys1}) take the form $x \in X$ and $u-x \perp X$.
These linear equations have a unique solution $u^*$.
If $u$ and $X$ are real then $u^*$ is the unique point in $X$
that is closest to $u$.
\hfill $\diamondsuit$
\end{example}

\begin{example}[The Eckart-Young Theorem]
\label{ex:eckartyoung}  \rm
Fix positive integers $r \leq s \leq t$ and set $n = st$.
Let $X_r$  be the variety of
$s \times t$-matrices of rank $\leq r$. This determinantal variety has
\begin{equation}
\label{eq:EY}  {\rm EDdegree}(X_r) \,\, = \,\, \binom{s}{r}.
 \end{equation}
To see this, we consider a \generic{} real $s\times t$-matrix
$U$ and its   {\em singular value decomposition}
\begin{equation}
\label{eq:SVD}
 U \,\,=\,\,\, T_1 \cdot {\rm diag}(\sigma_1, \sigma_2, \ldots, \sigma_s) \cdot T_2.
\end{equation}
Here $ \sigma_1 > \sigma_2 > \cdots > \sigma_s$ are the singular values of $U$,
and  $T_1$ and $T_2$ are orthogonal matrices of format
$s \times s$ and $t \times t$ respectively.
According to the Eckart-Young Theorem,
$$ U^* \,\, = \,\,\,
T_1 \cdot  {\rm diag}(\sigma_1, \ldots, \sigma_r,0, \ldots, 0) \cdot T_2 $$
is the closest rank $r$ matrix to $U$. More generally, the
critical points of $d_U$ are
\[ T_1 \cdot {\rm
diag}(0,\ldots,0,\sigma_{i_1},0,\ldots,0,\sigma_{i_r},0,\ldots,0)
\cdot T_2 \]
where $I=\{i_1<\ldots<i_r\}$ runs over all $r$-element
subsets of $\{1,\ldots,s\}$.
This yields the formula (\ref{eq:EY}).
The case $r=1,s=t=2$ was featured in Example \ref{ex:twobytwo}.
\hfill $\diamondsuit$
\end{example}

\begin{example} \label{ex:twobytwo}
To compare the ED degree with the ML degree, we consider
 the algebraic function that takes a $2 \times 2$-matrix $u$ to its
 closest rank one matrix. By Example~\ref{ex:eckartyoung} we have
${\rm EDdegree}(X) = 2$, while
 ${\rm MLdegree}(X) = 1$. To see what this means, consider the instance
 $$ u = \begin{pmatrix} 3 & 5 \\ 7 & 11 \end{pmatrix} . $$
The closest rank $1$ matrix in the
maximum likelihood sense of \cite{DSS, HS}
has rational entries:
$$
\hat u \quad = \quad
\frac{1}{3  {+} 5 {+} 7 {+} 11} \begin{pmatrix}
(3{+}5)(3{+}7) & (3{+}5)(5{+}11) \\
(7{+}11)(3{+}7) & (7{+}11)(5{+}11) \end{pmatrix};
$$
it is the unique rank-one matrix with the same row sums and
the same column sums as $u$.
By contrast, when minimizing the Euclidean distance, we must solve
a quadratic equation:
$$ u^*  =
\begin{pmatrix} v_{11}  \!& \! v_{12} \\ v_{21} \! & \! v_{22} \end{pmatrix}
 \,\hbox{where
$ \,    v_{11}^2{-}3 v_{11}{-}\frac{437}{1300} = 0,
v_{12}= \frac{62}{41} v_{11}{+}\frac{19}{82} ,\,
 v_{21} = \frac{88}{41} v_{11}{+} \frac{23}{82} ,\,
 v_{22} = \frac{141}{41} v_{11}{+}\frac{14}{41} $. }
$$
This rank $1$ matrix arises from the Singular Value Decomposition,
as seen in Example \ref{ex:eckartyoung}.~$\diamondsuit$
\end{example}

\begin{example}
\label{ex:M2fermat}
The following {\tt Macaulay2} code computes the
ED degree of a variety in $\R^3$:
\begin{verbatim}
R = QQ[x1,x2,x3]; I = ideal(x1^5+x2^5+x3^5); u = {5,7,13};
sing = I + minors(codim I,jacobian(I));
M = (matrix{apply(# gens R,i->(gens R)_i-u_i)})||(transpose(jacobian I));
J = saturate(I + minors((codim I)+1,M), sing);
dim J, degree J
\end{verbatim}
We chose a  random vector ${\tt u}$ as input for the above computation.
The output reveals that the
Fermat quintic cone $\{(x_1,x_2,x_3) \in \R^3 \,:\, x_1^5+x_2^5+x_3^5 = 0\}$
has ED degree $23$.
\hfill $\diamondsuit$
\end{example}

Here is a general  upper bound on the ED degree
in terms of the given polynomials $f_i$.

\begin{proposition} \label{prop:implgeneric}
Let $X $ be a variety of codimension $c$ in $\C^n$ that is cut out by
polynomials $f_1,f_2, \ldots,f_c, \ldots , f_s\,$ of degrees
$\,d_1\ge d_2\ge\cdots \ge d_c\ge\cdots \ge d_s$.
Then
$$
 {\rm EDdegree}(X) \,\,\,\le \,\,\,\,
d_1 d_2 \cdots d_c \cdot \!\!\!\!\!\!\!\!\!\!\!\!
\sum_{i_1+i_2+\cdots+i_c \leq n-c} \!\!\! \!\! (d_1-1)^{i_1} (d_2-1)^{i_2} \cdots (d_c-1)^{i_c}.
$$
Equality holds when $X$ is a \generic{} complete intersection of codimension $c$
(hence $c=s$).
\end{proposition}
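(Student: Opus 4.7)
The plan is to reformulate the critical-point conditions via Lagrange multipliers, reducing the problem to counting isolated solutions of a polynomial system in $\C^n \times \C^c$, and then to carry out that count by intersection theory on a bi-projective compactification.

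For the upper bound, I first reduce to a complete intersection. Generic $\C$-linear combinations $g_1, \dots, g_c$ of $f_1, \dots, f_s$ with $\deg g_i = d_i$ cut out a codimension-$c$ complete intersection $Y = V(g_1, \dots, g_c) \supseteq X$, and at generic smooth points of $X$ the gradients $\nabla g_1(x), \dots, \nabla g_c(x)$ form a basis of the $c$-dimensional conormal space of $X$ at $x$. Hence, for generic $u$, every critical point of $d_u|_X$ on the smooth locus lifts uniquely to a solution $(x,\lambda)\in\C^n\times\C^c$ of the Lagrangian system
$$ g_i(x)=0 \ \ (i=1,\dots,c), \qquad x_j-u_j+\sum_{i=1}^c \lambda_i\,\frac{\partial g_i}{\partial x_j}(x)=0 \ \ (j=1,\dots,n), $$
giving $n+c$ polynomial equations in $n+c$ unknowns. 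Thus $\ED(X)$ is at most the number of isolated solutions of this system.

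To count these solutions, I pass to the bi-projective compactification $\PP^n\times\PP^c$ with homogenizing coordinates $x_0,\lambda_0$. Each $g_i$ is bi-homogeneous of bi-degree $(d_i,0)$ and each Lagrangian equation can be bi-homogenized to bi-degree $(d_1-1,1)$. The naive multi-homogeneous Bezout bound $\prod_i d_i \cdot \binom{n}{c}(d_1-1)^{n-c}$ over-counts the $\C^n \times \C^c$ solutions because of excess intersection on the boundary $\{x_0\lambda_0=0\}$. A residual-intersection analysis --- using the Euler relation $\sum_j x_j\,\partial g_i/\partial x_j=d_i g_i$ for the leading forms of $g_i$, together with the Chern class $c(T\bar Y)=(1+H)^{n+1}/\prod_i(1+d_iH)$ on the projective closure $\bar Y\subset\PP^n$ --- identifies the excess contribution at infinity and reduces the count to precisely
$$\prod_{i=1}^c d_i \cdot \sum_{i_1+\cdots+i_c\le n-c}(d_1-1)^{i_1}\cdots(d_c-1)^{i_c}.$$

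For equality when $X$ itself is a generic complete intersection one takes $g_i = f_i$ directly; genericity of the $f_i$ and $u$ ensures that $X$ is smooth, that no solution of the Lagrangian system lies at infinity in $\PP^n \times \PP^c$, and that every isolated solution has multiplicity one and corresponds to a distinct critical point. The main obstacle in this plan is the residual-intersection computation: the naive multi-homogeneous Bezout number overestimates the ED degree in general, and matching the excess to the claimed combinatorial formula requires either an explicit Chern-class calculation on $\bar Y$ or an inductive argument on $c$ bootstrapping from the hypersurface case $c=1$.
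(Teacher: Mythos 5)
There is a genuine gap, and you have named it yourself: the entire content of the proposition is the count for a \emph{generic complete intersection}, and your argument delegates exactly this step to an unspecified ``residual-intersection analysis.'' The preparatory steps are fine and close in spirit to what the paper does (the paper also transforms the equations so that the first $c$ of them, of degrees $d_1,\dots,d_c$, cut out a complete intersection having $X$ as a component, and then uses additivity over components plus semicontinuity). But after that point nothing is proved: the multi-homogeneous B\'ezout number $\prod_i d_i\cdot\binom{n}{c}(d_1-1)^{n-c}$ on $\PP^n\times\PP^c$ really is strictly larger than the claimed formula, so the inequality does not follow from the compactification you set up until the excess at the boundary $\{x_0\lambda_0=0\}$ is computed and shown to account exactly for the difference. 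Moreover, for the upper bound on an arbitrary $X$ your $g_i$ are \emph{not} generic (they must contain the given $X$), so you additionally need a justification that the number of isolated solutions of this special Lagrangian system is at most the generic count; that is a semicontinuity statement which also has to be argued, not assumed.

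Your treatment of the equality case contains a concrete error that shows the boundary analysis cannot be waved away: you assert that for generic $f_i$ and $u$ ``no solution of the Lagrangian system lies at infinity in $\PP^n\times\PP^c$.'' If that were true, the affine solution count would equal the full multi-homogeneous B\'ezout number, contradicting your own (correct) observation that this number overestimates. For instance, with $c=1$, $n=2$, $d=3$ the bound is $3\cdot 2\cdot 2^{1}=12$ while the ED degree of a generic affine cubic curve is $9$; the three missing solutions sit at $x_0=0$ (the points at infinity of the cubic, paired with $\lambda=(1:0)$), and they are present for \emph{every} generic choice. So solutions at infinity are unavoidable for this bi-homogeneous structure, and identifying their contribution is precisely where the geometric-series shape $\sum_{i_1+\cdots+i_c\le n-c}(d_1-1)^{i_1}\cdots(d_c-1)^{i_c}$ comes from. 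For comparison, the paper avoids the Lagrangian compactification altogether: it computes the generic complete-intersection count from the Catanese--Trifogli Chern-class formula (Theorem~\ref{edformula}), evaluating $\int 1/\bigl((1-h)\prod_{i=1}^c(1-(d_i-1)h)\bigr)$ in $H^*(\PP^{n-1})$ to get Corollary~\ref{prop:implgeneric2}, and then transfers to the affine statement via Theorem~\ref{thm:affineproj}. If you want to keep your route, you must actually carry out the excess-intersection bookkeeping (for example by an explicit Chern/Segre class computation on a compactification adapted to the differing bidegrees $(d_i-1,1)$, or by induction on $c$ starting from the hypersurface case); as written, the key identity is asserted rather than proved.
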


In Section~\ref{sec:Chern}, this result will be derived from our
Chern class formula given in Theorem \ref{edformula} and from Theorem
\ref{thm:affineproj}. The latter relates the ED degree of an affine variety
and of its projective closure.  A similar bound for the ML degree appears
in \cite[Theorem 5]{HKS}.

Many varieties arising in applications are rational
and they are presented by a parametrization
$\psi : \R^m \rightarrow \R^n$ whose coordinates
$\psi_i$ are rational functions in $m$ unknowns $\, t = (t_1,\ldots,t_m)$.
Instead of first computing the ideal of $X$ by implicitization
and then following the approach above,
we can  use the parametrization directly to compute
the ED degree of~$X$.

The squared distance function in terms of the parameters equals
$$ D_u(t) \,\, = \,\,  \sum_{i=1}^n (\psi_i(t) - u_i)^2 . $$
The equations we need to solve are given by
$m$ rational functions in $m$ unknowns:
\begin{equation}
\label{eq:para}
 \frac{\partial D_u}{\partial t_1} \,=\, \cdots \,= \,
\frac{\partial D_u}{\partial t_m} \,=\, 0 .
\end{equation}
The critical  locus in $\C^m$ is the set of
all solutions to (\ref{eq:para}) at which the
 Jacobian of $\psi$ has maximal rank.
The closure of the image of this set under $\psi$
coincides with the variety of (\ref{eq:critideal}).
Hence, if the parametrization $\psi$ is generically finite-to-one of degree $k$,
then the critical locus in $\C^m$
is finite, by Lemma \ref{lem:EDd},
and its cardinality equals $\,k \cdot {\rm EDdegree}(X)$.

In analogy to Proposition \ref{prop:implgeneric},
we can ask for the ED degree when general polynomials
are used in the parametrization of $X$.
Suppose that $n-m$ of the $n$ polynomials $\psi_i(t)$ have
degree $\le d$, while the remaining $m$ polynomials are
\generic{} of degree $d$. Since $u$ is general,
\eqref{eq:para} has no solutions at infinity, and all its
solutions  have multiplicity one.
Hence B\'ezout's Theorem implies
\begin{equation}
\label{eq:implgeneric}
 {\rm EDdegree}(X) \, = \, (2d-1)^m .
 \end{equation}
For specific, rather than general, parametrizations, the right-hand side is
just an upper bound on the ED degree of $X$. As the following example shows,
the true value can be smaller for several different reasons.


\begin{example} \label{ex:twistedcubic}
Let $ m = 2, n = 4$ and consider the map
$\psi(t_1,t_2) = (t_1^3, t_1^2 t_2, t_1 t_2^2, t_2^3)$,
which has degree $k=3$.
Its image $X \subset \C^4$ is the cone over the
twisted cubic curve.
The system (\ref{eq:para}) consists of two quintics in $t_1,t_2$,
so B\'ezout's Theorem predicts  $25 = 5 \times 5$ solutions.
The origin  is a solution of multiplicity $4$ and maps to a singular
point of $X$, hence does not contribute to the ED degree.
 The critical locus in $\C^2$ consists
of $21 = 25 - 4 $ points. We conclude that the toric surface
$X$ has ${\rm EDdegree}(X) = 21/k = 7$.

Next we change the parametrization by scaling the middle two monomials as follows:
\begin{equation}
\label{eq:twistdistance1}
 \widetilde \psi(t_1,t_2) \,\,\,= \,\,\, \bigl( \, t_1^3 \,, \,\sqrt{3} t_1^2 t_2 \,, \,\sqrt{3} t_1 t_2^2
\,,\, t_2^3 \,\bigr).
\end{equation}
We still have $k=3$. This scaling is special in that $||\widetilde
\psi(t_1,t_2)||^2=(t_1^2+t_2^2)^3$, and this causes the ED degree to
drop. The function whose critical points we are counting has the form
$$ \widetilde D(t_1,t_2) \,= \,
(t_1^3 - a)^2 + 3(t_1^2 t_2 - b)^2 +
3(t_1 t_2^2 - c)^2 + (t_2^3 - d)^2 , $$
where $a,b,c,d$ are random scalars. A computation shows
that the number of complex critical points of $\widetilde D$
equals $9$.  So, the  corresponding toric surface
$\widetilde X$ has ${\rm EDdegree}(\widetilde X) = 9/k = 3$.
This is a special case of Corollary~\ref{cor:veronese} on
Veronese varieties that are scaled such that the norm on the
ambient space is a power of the norm on the parametrizing space.
\hfill $\diamondsuit$
\end{example}

The variety $X \subset \C^n$ is an
{\em affine cone} if $x \in X$ implies $\lambda x \in X$
for all $\lambda \in \C$.
This means that $I_X$ is a homogeneous ideal in $\R[x_1,\ldots,x_n]$.
By slight abuse of notation, we identify $X$
with the projective variety  given by $I_X$ in $ \PP^{n-1}$.
The former is the affine cone over the latter.

We define the {\em ED degree of a
projective variety} in $\PP^{n-1}$ to be the
ED degree of the corresponding affine cone in $\C^n$.
For instance, in Example \ref{eq:twistdistance1}  we considered
 two twisted cubic curves $X$ and $\widetilde X$ that lie in $\PP^3$. These curves have
ED degrees $3$ and $7$ respectively.

To take advantage of the homogeneity of the generators of $I_X$,
and of the geometry of projective space $\PP^{n-1}$, we replace
 (\ref{eq:critideal}) with the following homogeneous
ideal in $\R[x_1,\ldots,x_n]$:
\begin{equation}
\label{eq:critideal2}
\biggl( I_X + \biggl\langle
\hbox{$(c+2) \times (c+2) $-minors of $ \begin{pmatrix} u \\ x \\ J(f) \end{pmatrix} $} \biggr\rangle
\biggr) : \bigl(I_{X_{\rm sing}} \cdot  \langle x_1^2 + \cdots + x_n^2 \rangle \,\bigr)^\infty .
\end{equation}
The singular locus of an affine cone is the cone over the
singular locus of the projective variety.
They are defined by the same ideal $I_{X_{\rm sing}}$.
The {\em isotropic quadric} $\,Q = \{
x \in \PP^{n-1}: x_1^2 + \cdots + x_n^2 = 0\}$
plays a special role,  seen clearly
in the proof of Lemma~\ref{lem:critcone}. In particular, the role of $Q$
exhibits that the computation of ED degree is a metric
problem. Note that $Q$ has no real points.
The {\tt Macaulay2} code in Example \ref{ex:M2fermat}
can be adapted to verify ${\rm EDdegree}(Q) = 0$.

The following lemma concerns the   transition between
affine cones and projective varieties.

\begin{lemma}\label{lem:critcone}
Fix an affine cone $X \subset \C^n$ and a data point $u \in \C^n \backslash X$.
Let $x \in X \backslash \{0\}$ be such that the corresponding point
$[x]$ in $\PP^{n-1}$ does not lie in the isotropic quadric $Q$.
Then $[x]$ lies in the projective variety of~(\ref{eq:critideal2})
if and only if some scalar multiple
$\lambda x$ of $x$ lies in the affine variety of
(\ref{eq:critideal}). In that case, the scalar $\lambda$ is unique.
\end{lemma}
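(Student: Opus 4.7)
My approach is to reinterpret both rank conditions as linear-algebra statements about two subspaces of $\C^n$: the line $\mathrm{span}(x)$ and the row space $\mathrm{rowspan}(J(f)(x))$, which is the normal space to the tangent space $T_x X$. Since $x \notin \Xsing$, the Jacobian $J(f)(x)$ has rank exactly $c = \codim X$. The essential geometric input is Euler's identity: because $X$ is a cone, each $f_i$ is homogeneous, so $J(f)(x)\cdot x = 0$, which means every row of $J(f)(x)$ is perpendicular to $x$. The assumption $[x] \notin Q$ gives $\langle x, x \rangle \neq 0$, so $x$ itself does not lie in $\mathrm{rowspan}(J(f)(x))$. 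Consequently the matrix $\begin{pmatrix} x \\ J(f)(x) \end{pmatrix}$ has rank exactly $c+1$, and its row space is the internal direct sum $\mathrm{span}(x) \oplus \mathrm{rowspan}(J(f)(x))$.

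With this direct-sum decomposition in hand, translating (\ref{eq:critideal2}) at $[x]$ is immediate: the $(c+2)\times(c+2)$ minor condition on $\begin{pmatrix} u \\ x \\ J(f)(x) \end{pmatrix}$ says this matrix has rank at most $c+1$, and since the lower block already has rank $c+1$, this is equivalent to $u$ lying in $\mathrm{span}(x) \oplus \mathrm{rowspan}(J(f)(x))$. By the direct sum I may write $u = \lambda x + v$ uniquely, with $v \in \mathrm{rowspan}(J(f)(x))$. Pairing against $x$ annihilates $v$ and gives $\lambda = \langle u, x\rangle/\langle x, x\rangle$, which yields both the existence and the asserted uniqueness of $\lambda$.

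Next I would match this $\lambda$ with the affine condition on $\lambda x$. The key point is homogeneity: $\partial f_i/\partial x_j$ has degree $d_i - 1$, so $J(f)(\lambda x) = \mathrm{diag}(\lambda^{d_i - 1})\,J(f)(x)$; for $\lambda \neq 0$ this diagonal matrix is invertible, so the row space is unchanged. Moreover $X$ and $\Xsing$ are both cones, so $\lambda x \in X \backslash \Xsing$ whenever $\lambda \neq 0$. Consequently (\ref{eq:critideal}) at $\lambda x$ becomes $u - \lambda x \in \mathrm{rowspan}(J(f)(x))$, i.e., $u \in \lambda x + \mathrm{rowspan}(J(f)(x))$, which is precisely the projective condition via the $\lambda$ just computed. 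Both directions of the equivalence now drop out of the same identification.

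The one technicality I expect to be the main potential obstacle is the edge case $\lambda = 0$: then $\lambda x = 0$, and one must verify either that the projective condition cannot force this degenerate outcome, or that $0$ is a legitimate member of the affine critical variety. Since $u \notin X$ and the saturation by $I_{\Xsing}$ in (\ref{eq:critideal}) removes the origin whenever $0 \in \Xsing$ (which is the case for every affine cone other than a linear subspace), this case is either vacuous or handled cleanly by the saturation; in the linear-subspace case it is verified directly by the orthogonal-projection formula. Once this edge is dispatched, the proof is complete, with uniqueness of $\lambda$ coming from the direct-sum decomposition of the first paragraph.
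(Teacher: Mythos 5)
Your core argument is the paper's own, in only slightly different clothing: Euler's relation for the homogeneous $f_i$ together with $[x]\notin Q$ shows that $x$ does not lie in the row span of $J(f)(x)$, hence $\begin{pmatrix} x\\ J(f)\end{pmatrix}$ has rank $c+1$; the $(c+2)$-minor condition in (\ref{eq:critideal2}) then says exactly that $u\in \mathrm{span}(x)\oplus\mathrm{rowspan}\,J(f)(x)$, giving a unique $\lambda$ with $u-\lambda x$ in the row span; and homogeneity of the partial derivatives identifies this with the rank condition of (\ref{eq:critideal}) at $\lambda x$ when $\lambda\neq 0$. This is the same decomposition, the same use of the non-isotropy hypothesis, and the same uniqueness argument as in the paper, so paragraphs one through three are fine.

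Two caveats. First, you open with ``since $x\notin\Xsing$,'' but the lemma only assumes $x\in X\setminus\{0\}$ with $[x]\notin Q$; the paper reduces to the smooth case in one sentence by noting that both (\ref{eq:critideal}) and (\ref{eq:critideal2}) are saturated with respect to $I_{\Xsing}$, and your write-up needs at least that remark. Second, and more substantively, your dispatch of the $\lambda=0$ case does not work as stated. Saturation does not simply ``remove the origin'' (it removes components contained in $V(I_{\Xsing})$, and the origin can survive on a larger component); and when the origin \emph{is} removed, that makes the edge case harder rather than vacuous: $u\notin X$ in no way prevents $u\perp T_xX$, and in that situation $[x]$ does satisfy the minor condition of (\ref{eq:critideal2}) with $\lambda=0$, while no \emph{nonzero} multiple $\mu x$ can satisfy (\ref{eq:critideal}) (that would force $x$ into the row span, contradicting $[x]\notin Q$). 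So the biconditional then hinges precisely on whether $0$ lies in the variety of (\ref{eq:critideal}) for that particular $u$, which the phrase ``either vacuous or handled cleanly by the saturation'' does not establish. In fairness, the paper's proof is equally silent on this: it stops after producing the unique $\lambda$, without separating $\lambda=0$. So you have reproduced the paper's proof of the main case, correctly identified the one delicate point it glosses over, but your proposed resolution of that point is an assertion, not an argument.
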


\begin{proof}
Since both ideals are saturated with respect to $I_{X_{\rm sing}}$, it
suffices to prove this under the assumption that $x \in X \backslash
X_{\rm sing}$, so that the Jacobian $J(f)$ at $x$ has rank $c$. If
$u- \lambda x$ lies in the row space of $J(f)$, then the span of $u,x,$
and the rows of $J(f)$ has dimension at most $c+1$. This proves
the only-if direction. Conversely, suppose that $[x]$ lies in the
variety of (\ref{eq:critideal2}). First assume that $x$ lies
in the row span of $J(f)$. Then $x = \sum
\lambda_i \nabla f_i(x)$ for some $\lambda_i \in \C$. Now recall that if
$f$ is a homogeneous polynomial in $\R[x_1,\ldots,x_n]$ of degree $d$,
then $x \cdot \nabla f(x) = d \,f(x)$. Since $f_i(x)=0$ for all $i$, we
find that $x \cdot \nabla f_i(x)=0$ for all $i$, which implies
that $x \cdot x=0$, i.e., $[x] \in Q$. This contradicts the
hypothesis, so the matrix $\begin{pmatrix} x \\
J(f) \end{pmatrix}$ has rank $c+1$. But then $u- \lambda x$ lies in
the row span of $J(f)$ for a unique $\lambda \in \C$.
\end{proof}

The condition on $X$ in the following corollary is
fulfilled by any projective variety that contains at least one real point.

\begin{corollary} \label{cor:EDaffproj}
Let $X$ be a variety in $ \PP^{n-1}$
that is not contained in the isotropic quadric $Q$,
and let $u$ be \generic{}. Then ${\rm EDdegree}(X)$ is equal to the
number of zeros of (\ref{eq:critideal2}) in~$\PP^{n-1}$.
\end{corollary}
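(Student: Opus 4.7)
The plan is to identify the projective zeros of (\ref{eq:critideal2}) with the nonzero affine zeros of (\ref{eq:critideal}) via Lemma~\ref{lem:critcone}, since for \generic{} $u$ the latter set has cardinality $\mathrm{EDdegree}(X)$ by Lemma~\ref{lem:EDd}. The technical content is to verify that, for \generic{} $u$, neither set meets the isotropic quadric $Q$.

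First I would show that, for \generic{} $u$, every solution $y \in \C^n$ of (\ref{eq:critideal}) satisfies $y \cdot y \neq 0$. Because $X$ is an affine cone cut out by homogeneous polynomials, the Euler identity yields $y \in T_y X$ at every smooth point $y \in X$; the criticality condition $u - y \perp T_y X$ then forces $(u - y) \cdot y = 0$, i.e.\ $u \cdot y = y \cdot y$. So a critical point $y$ with $y \cdot y = 0$ would impose the extra linear relation $u \cdot y = 0$. A dimension count on the locus
\[
\bigcup_{y \in (X \cap Q) \setminus X_{\mathrm{sing}}} \bigl( y + N_y X \bigr) \,\subset\, \C^n,
\]
whose dimension is at most $\dim(X \cap Q) + \codim(X) \leq (\dim X - 1) + c = n - 1$ by the hypothesis $X \not\subset Q$, shows that this degenerate behavior is avoided on a Zariski-dense open subset of data points $u$.

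Second, I would apply Lemma~\ref{lem:critcone}: for such $u$ and any $[x] \notin Q$, the lemma states that $[x]$ is a zero of (\ref{eq:critideal2}) if and only if a unique scalar multiple $\lambda x$ is a zero of (\ref{eq:critideal}). Combined with the previous step, which guarantees that every affine zero $y$ satisfies $[y] \notin Q$, the map $y \mapsto [y]$ becomes a bijection between the nonzero affine zeros of (\ref{eq:critideal}) and the projective zeros of (\ref{eq:critideal2}) lying outside $Q$; injectivity comes from the uniqueness of $\lambda$, and surjectivity is immediate.

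Third, I would rule out projective zeros in $Q$. Since (\ref{eq:critideal2}) is saturated by $\langle x_1^2 + \cdots + x_n^2 \rangle$, the variety it defines has no irreducible component contained in $Q$. By the previous step, the complement of $Q$ in this variety is already finite, hence the variety itself is zero-dimensional; any isolated point lying in $Q$ would then be a primary component contained in $Q$, contradicting the saturation. Combining the three steps gives the desired count $\mathrm{EDdegree}(X)$. The main obstacle is the dimension argument in Step~1; Lemma~\ref{lem:critcone} and the formal properties of saturation then do the remaining work.
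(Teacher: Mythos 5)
Your proof is correct and follows essentially the same approach as the paper's: apply Lemma~\ref{lem:critcone} and verify that for generic $u$ no critical point lies in the isotropic quadric $Q$. The paper's proof merely asserts the latter and defers to Theorems~\ref{thm:EDaffine} and~\ref{thm:EDproj}; you supply the missing dimension count on $\bigcup_{y \in (X\cap Q)\setminus X_{\sing}}(y + N_y X)$, which has dimension at most $\dim(X\cap Q) + c \leq (n-c-1)+c = n-1 < n$ precisely because $X \not\subseteq Q$. One minor remark: the Euler-identity observation ($y \in T_y X$, hence $u\cdot y = y\cdot y$) is correct but never actually used in the dimension count you give---the bound $n-1$ already follows from $\dim(X\cap Q) < \dim X$ alone, so that paragraph could be omitted. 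Your Step~3 (ruling out zeros in $Q$ via saturation plus zero-dimensionality) is also sound and a welcome precision over the paper's terse phrasing.
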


\begin{proof}
Since $X \not\subseteq Q$ and
$u$ is \generic{}, none of the critical points of
$d_u$ in   $X \backslash X_{\rm sing}$ will lie in $Q$.
The claim follows from Lemma \ref{lem:critcone}.
For further details see Theorems \ref{thm:EDaffine} and
\ref{thm:EDproj}.
\end{proof}

Corollary \ref{cor:EDaffproj} implies that
Proposition \ref{prop:implgeneric} holds
almost verbatim for projective varieties.

\begin{corollary} \label{prop:implgeneric2}
Let $X $ be a variety of codimension $c$ in $\PP^{n-1}$ that is cut out by homogeneous
polynomials $F_1,F_2, \ldots,F_c, \ldots , F_s$ of degrees $d_1\ge d_2\ge\cdots \ge d_c\ge\cdots \ge d_s$.
Then
\begin{equation} \label{eq:implgeneric2} {\rm EDdegree}(X) \,\,\,\le\,\,\,\,
d_1 d_2 \cdots d_c \cdot \!\!\!\!\!\!\!\!\!\!\!\!
\sum_{i_1+i_2+\cdots+i_c \leq n-c-1} \!\!\! \!\! (d_1-1)^{i_1} (d_2-1)^{i_2} \cdots (d_c-1)^{i_c}.
\end{equation}
Equality holds when $X$ is a \generic{} complete intersection of codimension $c$ in $\PP^{n-1}$.
\end{corollary}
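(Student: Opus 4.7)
The plan is to mirror the proof of Proposition~\ref{prop:implgeneric} in the projective setting, using Corollary~\ref{cor:EDaffproj} to reduce the ED degree to a count of zeros of the projective critical ideal~\eqref{eq:critideal2} in $\PP^{n-1}$. We may assume $X\not\subseteq Q$, since otherwise $\ED(X)=0$ and the bound is vacuous. For generic $u$, Corollary~\ref{cor:EDaffproj} identifies $\ED(X)$ with the number of zeros of~\eqref{eq:critideal2} in $\PP^{n-1}$, and this is the quantity we bound.

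First I would reduce to a complete intersection. Since $X$ has codimension $c$, one can reindex so that $F_1,\ldots,F_c$ have the top degrees $d_1\geq\cdots\geq d_c$ and the complete intersection $Y=V(F_1,\ldots,F_c)\subseteq\PP^{n-1}$ contains $X$ as an irreducible component. At any smooth point $x\in X$, the tangent space $T_xX$ coincides with $T_xY$ (both have codimension $c$), so any critical point of $d_u$ on $X\setminus \Xsing$ is also a critical point of $d_u$ on $Y\setminus \Ysing$, giving $\ED(X)\leq\ED(Y)$. This reduces the inequality to the case $s=c$ and simultaneously shows that a generic complete intersection realizes the worst case, so equality in the generic case will follow from a direct computation of $\ED(Y)$.

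For the generic complete intersection $Y=V(F_1,\ldots,F_c)$, I would perform the intersection-theoretic count. The projective critical locus on $Y$ is the rank-drop locus of the $(c+2)\times n$ matrix with rows $u,\,x,\,\nabla F_1(x),\ldots,\nabla F_c(x)$, whose rows carry graded degrees $0,1,d_1-1,\ldots,d_c-1$ in $x$. Since $\dim Y=n-1-c$, this rank-drop locus has expected codimension $n-c-1$ in $Y$, and a Porteous/Chern-class computation on the restriction of the tangent bundle of $\PP^{n-1}$ to $Y$ — exactly the same calculation that yields Proposition~\ref{prop:implgeneric} in Section~\ref{sec:Chern}, but with the affine ambient space replaced by the projective one — gives the polynomial
$$d_1 d_2 \cdots d_c \cdot\!\!\!\!\!\!\!\! \sum_{i_1+\cdots+i_c\leq n-c-1} \!\!\!\!\!\! (d_1-1)^{i_1}\cdots(d_c-1)^{i_c}.$$
The summation range $n-c-1$ rather than $n-c$ reflects precisely the drop in dimension from the affine cone to the projective variety.

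The main obstacle is this Chern-class computation together with the bookkeeping of the saturations in~\eqref{eq:critideal2}: one must verify that for generic $u$ and generic $F_i$, none of the points counted by Porteous lie in $\Ysing$ or on the isotropic quadric $Q$, so that saturation by $I_{\Ysing}\cdot\langle x_1^2+\cdots+x_n^2\rangle$ strips away no legitimate critical points. Genericity of $u$ excludes $Q$ because the critical points of $d_u$ on $Y\setminus\Ysing$ vary continuously with $u$ while $Q\cap Y$ has positive codimension in $Y$; genericity of $F_1,\ldots,F_c$ ensures $Y$ is smooth, making $\Ysing$ empty. With these two points confirmed, the Porteous count is sharp, yielding equality in the generic complete intersection case and the claimed upper bound in general.
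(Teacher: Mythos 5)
Your architecture is the same as the paper's (reduce to a codimension-$c$ complete intersection containing $X$ as a component, then evaluate a Chern-class/Porteous count for a generic complete intersection, as in the proof of Proposition~\ref{prop:implgeneric} in Section~\ref{sec:Chern}), but two steps have real gaps. First, reindexing alone does not produce a complete intersection: for the twisted cubic in $\PP^3$ cut out by $x_0Q_1,\,x_0Q_2,\,x_0Q_3,\,Q_1,\,Q_2,\,Q_3$ (degrees $3,3,3,2,2,2$, so $c=2$), any two top-degree generators have the plane $V(x_0)$ in their common zero locus, so $V(F_1,F_2)$ has codimension $1$. One must instead pass to combinations $F_i'=\sum_j G_{ij}F_j$ with $\deg G_{ij}=d_i-d_j\ge 0$ (possible because the degrees are ordered decreasingly; generic choices give codimension $c$) --- this is the paper's ``transformation of the given equations''. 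Relatedly, at a smooth point of $X$ that lies on another component of $Y$ one has $T_xY\supsetneq T_xX$, so your tangent-space comparison needs the extra remark that for generic $u$ the critical points on $X\setminus\Xsing$ avoid the proper subvariety $X\cap\Ysing$ (or, more simply, use additivity of the ED degree over the components of $Y$).

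The more serious gap is the missing bridge between the specific complete intersection $Y\supseteq X$ and a generic one. The $Y$ you construct is typically reducible, singular, and not transversal to $Q$, so the Catanese--Trifogli/Porteous formula cannot be applied to it directly; your closing claim that ``genericity of $F_1,\ldots,F_c$ ensures $Y$ is smooth'' conflates the given defining equations of $X$ with generic forms of the same degrees --- if $X$ is not itself a complete intersection, $Y$ necessarily has extra components and is not a generic member of the family. The paper closes exactly this gap by semicontinuity: as a generic complete intersection of degrees $d_1,\ldots,d_c$ specializes to $Y$, the number of critical points of $d_u$ for generic $u$ can only drop, so $\ED(Y)$ is bounded above by the generic value, which the Chern-class computation in Section~\ref{sec:Chern} identifies with the right-hand side of (\ref{eq:implgeneric2}). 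Without this degeneration argument (or some substitute controlling the non-generic $Y$), your chain $\ED(X)\le\ED(Y)\le{}$formula is not closed; with it, your outline coincides with the paper's proof.
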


Fixing the codimension $c$ of $X$ is essential in
Proposition \ref{prop:implgeneric} and Corollary \ref{prop:implgeneric2}.
Without this hypothesis, the bounds do not hold.
In Example~\ref{ex:caution}, we display
homogeneous polynomials $F_1,\ldots,F_c$
of degrees $d_1,\ldots,d_c$ whose
variety has ED degree  larger than (\ref{eq:implgeneric2}).

\begin{example}
\label{ex:M2ferma2t}
The following {\tt Macaulay2} code computes the
ED degree of a curve in $\PP^2$:
\begin{verbatim}
R = QQ[x1,x2,x3]; I = ideal(x1^5+x2^5+x3^5);  u = {5,7,13};
sing = minors(codim I,jacobian(I));
M = matrix {u}||matrix {gens R}||(transpose(jacobian I));
J = saturate(I+minors((codim I)+2,M), sing*ideal(x1^2+x2^2+x3^2));
dim J, degree J
\end{verbatim}
The output confirms that the
Fermat quintic curve given by $x_1^5+x_2^5+x_3^5 = 0$ has ED degree $23$.
By contrast, as seen from Corollary  \ref{prop:implgeneric2},
a general curve of degree five in $\PP^2$ has ED degree $25$. Saturating with
$I_{X_{\rm sing}}$ alone in the fourth line of the code would yield $25$.
\hfill $\diamondsuit$
\end{example}

It should be stressed  that the ideals (\ref{eq:critideal})
and (\ref{eq:critideal2}), and our two {\tt Macaulay2}
code fragments, are  blueprints for first computations.
In order to succeed with larger examples, it is essential
that these formulations be refined.
For instance, to express rank conditions on a polynomial matrix $M$,
the determinantal constraints are often too large, and it is
better to add a matrix equation of the form $\Lambda \cdot M  = 0$,
 where $\Lambda$ is a matrix filled with new unknowns.
 This leads to a system of bilinear equations, so the
 methods of Faug\`ere {\it et al}.~\cite{FSS} can be used.
  We also recommend trying
tools from numerical algebraic geometry, such as {\tt Bertini}~\cite{Bertini}.

\section{First applications}
\label{sec:FirstApp}

The problem of computing the closest point on a variety
arises in numerous applications. In this section
we discuss some concrete instances,
and we explore the ED degree in each case.

\begin{example}[Geometric modeling]
\label{ex:dokken}
Thomassen {\it et al.} \cite{TJD} study the nearest point
problem for a parametrized surface $X$ in $\R^3$.
The three coordinates of their birational map
$\psi : \R^2 \rightarrow X \subseteq \R^3$ are
polynomials in the parameters $(t_1,t_2)$
 that have degree $d_1$ in $t_1$
and degree $d_2$ in $t_2$.
The image $X = \overline{\psi(\R^2)}$
is a {\em B\'ezier surface} of bidegree $(d_1,d_2)$.
It is shown in \cite[\S 3]{TJD} that
$$ {\rm EDdegree}(X) \,\,\, =  \,\,\, 4d_1 d_2 + (2d_1-1)(2d_2-1). $$
This refinement of the B\'ezout bound in (\ref{eq:implgeneric}) is the
intersection number in $\PP^1 \times \PP^1$ of a curve of
bidegree $(2d_1-1,d_2)$ with a curve of bidegree $(d_1,2d_2-1)$.
The authors of \cite{TJD} demonstrate how to solve the critical equations
$\,\partial D_u /\partial t_1 = \partial D_u/ \partial t_2 = 0 \,$
with resultants based on moving surfaces. \hfill $\diamondsuit$
\end{example}

\begin{example}[The closest symmetric matrix]
\label{ex:closesym}
Let $X$ denote the variety of  symmetric $s \times s$-matrices
of rank $\leq r$. The nearest point problem for $X$
asks the following question: given a symmetric $s \times s$-matrix
$U = (U_{ij})$, find the symmetric rank $r$ matrix $U^*$ that is closest to $U$.
There are two natural interpretations
of this question in the Euclidean distance context.
The difference lies in which of the following two functions we are minimizing:
\begin{equation}
\label{eq:sympara}
D_U \,= \,  \sum_{i=1}^s \sum_{j=1}^s  \bigl(\,U_{ij} - \sum_{k=1}^r t_{ik} t_{kj} \,\bigr)^2
 \qquad  {\rm or} \qquad
 D_U \,\, =  \sum_{1 \leq i \leq j \leq s} \bigl( \,U_{ij} - \sum_{k=1}^r t_{ik} t_{kj} \, \bigr)^2 .
\end{equation}
These unconstrained optimization problems use the parametrization of
symmetric $s \times s$-matrices of rank $r$ that comes from
multiplying an $s \times r$ matrix $T = (t_{ij})$ with its transpose.
The two formulations are dramatically different as far as the ED degree is concerned.
On the left side, the Eckart-Young Theorem applies,
and ${\rm EDdegree}(X) = \binom{s}{r}$ as in
Example~\ref{ex:eckartyoung}. On the right side,
 ${\rm EDdegree}(X)$ is much larger than $\binom{s}{r}$.
For instance, for $s=3$ and $r= 1 \,\,{\rm or}\,\, 2$,
\begin{equation}
\label{eq:deg3_13} \qquad
 {\rm EDdegree}(X) = 3 \qquad \hbox{and} \qquad
{\rm EDdegree}(X) = 13.
\end{equation}
The two ideals that represent the constrained optimization problems
equivalent to (\ref{eq:sympara}) are
\begin{equation}
\label{eq:symimplicit} \!\!
\biggl\langle \hbox{$2 {\times} 2$-minors of }
\begin{pmatrix}
\sqrt{2}x_{11} \! & \! x_{12} & \! x_{13} \\
x_{12} &\! \! \! \!\sqrt{2}x_{22} \! & \! x_{23} \\
x_{13} & \! x_{23} &\! \!\! \sqrt{2} x_{33}
\end{pmatrix}
\biggr\rangle \,\,\hbox{and}  \,\,
\biggl\langle \hbox{$2 {\times} 2$-minors of }
\begin{pmatrix}
x_{11} & \! x_{12} & \! x_{13} \\
x_{12} & \! x_{22} & \! x_{23} \\
x_{13} & \! x_{23} & \! x_{33}
\end{pmatrix}
\biggr\rangle. \quad
\end{equation}
These equivalences can be seen via a change of variables. For example,
for the left ideal in (\ref{eq:symimplicit}),
the constrained optimization problem is to minimize $\sum_{1 \leq i \leq j \leq 3} (u_{ij}-x_{ij})^2$ subject to the nine quadratic equations $2x_{11}x_{22}=x_{12}^2, \sqrt{2}x_{11}x_{23}=x_{12}x_{13}, \ldots,2x_{22}x_{33}=x_{23}^2$. Now making the change of variables $x_{ii} = X_{ii}$ for $ i=1,2,3$ and
$x_{ij} = \sqrt{2}X_{ij}$ for $1 \leq i < j \leq 3$, and similarly,
$u_{ii} = U_{ii}$ for $i=1,2,3$ and
$u_{ij} = \sqrt{2}U_{ij}$ for $1 \leq i < j \leq 3$, we get the problem
$$\begin{matrix}
\textup{minimize} & \!\! \sum_{i=1}^{3} (U_{ii}-X_{ii})^2 \,+\, \sum_{1 \leq i < j \leq 3} 2(U_{ij}-X_{ij})^2
\medskip \\
\textup{ subject to} & \quad X_{ik}X_{jl} = X_{il}X_{jk} \, \,\,\, {\rm for} \,\,\, 1 \leq i < j \leq 3, 1 \leq k < l \leq 3.
\end{matrix}$$
This is equivalent to the left problem in (\ref{eq:sympara}) for $r=1$ via the
parametrization  $X_{ij} = t_it_j$.
The appearance of $\sqrt{2}$ in
the left matrix $M(x)$ in (\ref{eq:symimplicit}) is analogous to the appearance
of $\sqrt{3}$ in Example \ref{ex:twistedcubic}: it is the special scaling that
relates the natural squared matrix norm $\tr M(x)^T M(x)$ on the ambient space to
(two times) the squared norm $||x||^2$, and this puts the variety
defined by the $2 \times 2$-minors of $M(x)$ into special position
relative to the isotropic quadric $Q$.
In Example \ref{ex:bothmer}
we discuss a general ED degree formula for
symmetric $s \times s$-matrices of rank $\leq r$ that works
for the version on the right.
The same issue for ML degrees is
the difference between ``scaled'' and ``unscaled''
in the table at the end of \cite[\S 5]{HKS}.
\hfill $\diamondsuit$
\end{example}

\begin{example}[Computer vision]
This article got started with the following problem from \cite{AST,
HartleySturm, SSN}.  A general projective camera is a 
$3 \times 4$ real matrix of rank three, that defines a linear map from 
$\PP^3$ to $\PP^2$ sending a ``world point'' $y \in \PP^3$ to its image $Ay \in \PP^2$.
This map is well-defined everywhere except at the kernel of $A$, which is called 
the center of the camera.

The {\em multiview variety} associated to $n$ cameras $A_1,A_2,\ldots,A_n$
is the closure of the image of the map $\PP^3
\dashrightarrow (\PP^2)^n , y \mapsto (A_1 y, A_2 y, \ldots, A_n y)$. This is an irreducible 
three-fold in $(\PP^2)^n$ and its defining prime ideal $I_n$ is multi-homogeneous and lives in the
polynomial ring $\R[x_{ij}: i=1,\ldots,n, j=0,1,2]$, where  $(x_{i0}
: x_{i1}:x_{i2})$ are homogeneous coordinates of the $i$-th  plane
$\PP^2$. Explicit determinantal generators and Gr\"obner bases for $I_n$
are derived in \cite{AST}.  If we dehomogenize $I_n$ by setting $x_{i0}
= 1$ for $i=1,2,\ldots,n$, then we get a $3$-dimensional affine variety
$X_n$ in $\R^{2n} = (\R^2)^n$ that is the space of dehomogenized 
images under the $n$ cameras.  Note that $I_n$ and $X_n$ depend on the
choice of the matrices $A_1,A_2,\ldots,A_n$.  This dependence is governed
by the Hilbert scheme in \cite{AST}.

The Euclidean distance problem for $X_n$ is
known in computer vision as {\em $n$-view triangulation}. Following
  \cite{HartleySturm} and \cite{SSN}, the data
$u \in \R^{2n}$ are $n$
noisy images of a point in $\R^3$ taken by the $n$ cameras.
The maximum likelihood
solution of the recovery problem with Gaussian noise
is the configuration $u^* \in X_n$ of minimum distance to $u$.
For $n=2$, the variety $X_2$ is a hypersurface cut out by
a  bilinear polynomial
$(1,x_{11}, x_{12}) M  (1,x_{21},x_{22})^T$, where
$M$ is a $3 \times 3$-matrix of rank $2$.
Hartley and Sturm \cite{HartleySturm}
studied the critical equations and found that ${\rm EDdegree}(X_2) = 6$.
Their computations were extended by Stew{\'e}nius {\it et al.} \cite{SSN} up to $n=7$:
$$
\begin{array}{r|rrrrrrr}
n & \,\, 2 & 3 & 4 & 5 & 6 & 7 \\
\hline
 {\rm EDdegree}(X_n)\,\, & \,\, 6 & 47 & 148 & 336 & 638 & 1081
 \end{array}
$$

This table suggests the conjecture that these ED degrees grow
as a cubic polynomial:

\begin{conjecture} \label{conj:6_47}
The Euclidean distance degree of the affine multiview variety $X_n$ equals
$${\rm EDdegree}(X_n) \quad = \quad \frac{9}{2} n^3 - \frac{21}{2} n^2 + 8 n - 4. $$
\end{conjecture}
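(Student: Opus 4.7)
The plan is to exploit the rational parametrization $\psi\colon \mathbb{P}^3 \dashrightarrow (\mathbb{P}^2)^n$, $y \mapsto (A_1 y,\ldots,A_n y)$, whose image closure is the projective multiview variety. Choose affine coordinates $y=(y_1,y_2,y_3)$ on a chart of $\mathbb{P}^3$ and dehomogenize each image coordinate to write $\psi_{ij}(y)=\beta_{ij}(y)/\alpha_i(y)$ with $\alpha_i,\beta_{ij}$ affine-linear in $y$. By the parametric criterion following \eqref{eq:para}, $\mathrm{EDdegree}(X_n)$ equals the number of isolated complex zeros of
\[
\frac{\partial D_u}{\partial y_k}\ =\ 0,\quad k=1,2,3,\qquad D_u(y)\ =\ \sum_{i,j}\left(\frac{\beta_{ij}(y)}{\alpha_i(y)}-u_{ij}\right)^2,
\]
at which the Jacobian of $\psi$ has maximal rank, divided by the degree of $\psi$ (which is one under the birationality hypothesis in \cite{AST}).

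After clearing the denominator $\prod_i \alpha_i^3$, each of the three equations becomes a polynomial $F_k$ of degree $3n-1$. B\'ezout in $\mathbb{P}^3$ therefore gives the naive upper bound $(3n-1)^3 = 27n^3-27n^2+9n-1$, which is much larger than the conjectured value $\tfrac{9}{2}n^3-\tfrac{21}{2}n^2+8n-4$; the bulk of the work is to identify and subtract the excess. I would carry this out on the blowup $\widetilde{\mathbb{P}}^3$ of $\mathbb{P}^3$ at the $n$ camera centers $c_1,\ldots,c_n$, with exceptional divisors $E_1,\ldots,E_n$, on which $\psi$ extends to a morphism. Pulling each $F_k$ back to $\widetilde{\mathbb{P}}^3$ and decomposing it as a strict transform plus fixed components supported on the $E_i$, on the strict transforms of the principal planes $\{\alpha_i=0\}$, and on the plane at infinity of the chosen chart, the desired count becomes an intersection number in the Chow ring $A^{*}(\widetilde{\mathbb{P}}^3)=\mathbb{Z}[H,E_1,\ldots,E_n]/(\cdots)$, which is a polynomial in $n$.

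A parallel attack, which would serve as a cross-check, is to observe that for generic cameras $X_n$ is smooth and isomorphic to $\mathrm{Bl}_{c_1,\ldots,c_n}\mathbb{P}^3$, so that its tangent bundle and the restrictions of the hyperplane classes from each $\mathbb{P}^2$ factor are computable by standard blow-up formulas. One can then apply the Chern class formula of Theorem~\ref{edformula}, adapted to a multi-projective ambient space $(\mathbb{P}^2)^n$, and use Theorem~\ref{thm:affineproj} to translate the projective answer into the affine one. The multidegree of $X_n$ in $(\mathbb{P}^2)^n$, determined in \cite{AST}, enters as the key intermediate datum.

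The main obstacle will be the precise bookkeeping of the excess multiplicities on $\widetilde{\mathbb{P}}^3$: the principal planes $\{\alpha_i=0\}$ enter each $\partial D_u/\partial y_k$ with the cubic pole $\alpha_i^{-3}$, so their multiplicity in the common zero locus of $F_1,F_2,F_3$ depends nonlinearly on how many other such planes meet them; lines of the form $\{\alpha_i=\alpha_j=0\}$ are contracted by $\psi$ to a point and require separate treatment; and a final saturation is needed to remove spurious critical points lying on the isotropic conic of any $\mathbb{P}^2$ factor. Verifying that the resulting polynomial in $n$ matches the tabulated values for $n\le 7$ is essential, and the ideal outcome is to uncover a clean inductive relation between $\mathrm{EDdegree}(X_n)$ and $\mathrm{EDdegree}(X_{n-1})$ from which the cubic growth becomes transparent.
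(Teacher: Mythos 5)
This statement is an open conjecture: immediately after it the paper says ``At present we do not know how to prove this,'' and offers no proof to compare against. Your proposal is therefore a plan of attack on an unsolved problem, not a reconstruction of the paper's argument, and it is worth flagging the specific points where the plan as written would break down.

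Your second, ``cross-check'' route is the most problematic. Theorem~\ref{edformula} and the polar-class formula of Theorem~\ref{thm:sumpolar} compute the ED degree of a \emph{projective} variety in coordinates where the conormal variety misses the diagonal (equivalently, $X$ meets $Q$ transversally), and Theorem~\ref{thm:affineproj} lets you transfer to the affine variety only when the further hypothesis~(\ref{eq:suffcondition}) holds. The paper explicitly records that these hypotheses fail here: ${\rm EDdegree}(X_3)=47<112={\rm EDdegree}(\overline{X}_3)<148={\rm EDdegree}(Y_3)$, so the affine, projective-closure, and cone answers all differ. The Chern/polar-class machinery would therefore compute $\sum_i\delta_i(\overline{X}_n)$, which by Theorem~\ref{thm:affineproj} is only an \emph{upper bound} on ${\rm EDdegree}(X_n)$, with strict inequality here. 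Adapting Theorem~\ref{edformula} to the multi-projective ambient $(\PP^2)^n$ has the same defect: you would first Segre-embed, and the multiview variety lies in special, non-generic coordinates relative to that embedding. The missing ingredient --- how to account for the special position of $X_n$ with respect to the isotropic quadric, i.e.\ for the failure of transversality --- is exactly what the paper identifies as the obstacle, and nothing in your plan supplies it.

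Your primary route (excess intersection on $\widetilde{\PP}^3$ for the three cleared critical equations $F_1,F_2,F_3$ of degree $3n-1$) is at least aimed at the affine quantity directly, and the degree count $3n-1$ is correct. But you correctly flag that the hard part --- the multiplicities along the exceptional divisors $E_i$, along the strict transforms of the principal planes $\{\alpha_i=0\}$ which occur with a cubic pole and therefore nonreduced excess, along the contracted lines $\{\alpha_i=\alpha_j=0\}$, along the hyperplane at infinity of the affine chart, and the final saturation against critical points falling into an isotropic locus --- is precisely the content that would constitute a proof, and none of it is carried out. As stated this is a research program consistent with the data for $n\le 7$, not a proof; in particular it cannot be checked against the paper, which leaves the conjecture open.
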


At present we do not know how to prove this.
Our first idea was to replace
the affine threefold $X_n$ by a projective variety. For instance,
consider the closure $\overline{X}_n$ of $X_n$ in $\PP^{2n}$.
Alternatively, we can regard $I_n$ as a homogeneous
ideal in the usual $\Z$-grading, thus defining a
projective variety $Y_n$ in $\PP^{3n-1}$. However,  for $n \geq 3$,
the ED degrees of  both $\overline{X}_n$ and $Y_n$
are larger than the ED degree of $X_n$.
For instance, in the case of three cameras we~have
$$ {\rm EDdegree}(X_3) = 47 \,\,\,\,  < \,\,\,\,
{\rm EDdegree}(\overline{X}_3) \,=\, 112 \,\,\,\, < \,\,\,\,
{\rm EDdegree}(Y_3) \, = \,148. $$
Can one find a natural reformulation of
 Conjecture \ref{conj:6_47} in terms of projective geometry?
\hfill $\diamondsuit$
\end{example}

Many problems in engineering lead to minimizing the distance from a given
point $u$ to an algebraic variety.  One such problem is
detecting voltage collapse and blackouts in electrical power systems
\cite[page 94]{PabloThesis}. It is typical to model a power system
as a differential equation $\dot x = f(x,\lambda)$ where $x$ is the state and
$\lambda$ is the parameter vector of load powers. As $\lambda$ varies,
the state moves around. At
critical load powers, the system can lose equilibrium
 and this results in a blackout due to voltage collapse. The
set of critical $\lambda$'s form an algebraic variety $X$ that one wants to stay away from.
This is done by calculating the closest point on $X$ to the current set of parameters
$\lambda_0$ used by the power system.
A similar, and very well-known, problem from {\em control theory}
is to ensure the {\em stability} of a univariate polynomial.

\bigskip

\begin{table}[h]
\begin{center}
\begin{tabular}{c|ccll}
$n$ & $\ED(\Gamma_n)$ & $\ED(\bar{\Gamma}_n)$ & $\aED(\Gamma_n)$ & $\aED(\bar{\Gamma}_n)$ \\
\hline
3 & 5 & 2 &
1.162...      & 2                     \\
4 & 5 & 10 &
1.883...      & 2.068...               \\
5 & 13 & 6 &
2.142...      & 3.052...              \\
6 & 9 & 18 &
2.416...      & 3.53...               \\
7 & 21 & 10 &
2.66...       & 3.742...
\end{tabular}
\caption{ED degrees and average and ED degrees of small Hurwitz
determinants.} \label{tab:Hurwitz1}
\end{center}
\end{table}

\begin{example}[Hurwitz stability]
\label{ex:hurwitz}
Consider  a univariate polynomial with real coefficients,
$$\,u(z)\,\, = \,\, u_0 z^n + u_1 z^{n-1} + u_2 z^{n-2} + \cdots
+ u_{n-1} z + u_n. $$
We say that $u(z)$ is {\em stable}  if each of its $n$ complex zeros has negative real part.
It is an important problem in control theory
to check whether a given polynomial $u(z)$ is stable,
and, if not, to find a polynomial
$x(z)$ in the closure of the stable locus
that is closest to $u(z)$.

The stability of
$x(z) = \sum_{i=0}^n x_i z^i$ is characterized by the following {\em Hurwitz test}.
The $n$th {\em Hurwitz matrix} is an
$n \times n$ matrix with $x_1,\ldots,x_n$ on the diagonal. Above the diagonal entry
$x_i$ in column $i$, we stack as much of $x_{i+1}, x_{i+2}, \ldots,x_n$ consecutively, followed by zeros if there is extra room. Similarly, below $x_i$, we stack as much of $x_{i-1}, x_{i-2}, \ldots,x_1,x_0$ consecutively, followed by zeros if there is extra room.
The Hurwitz test says that $x(z)$ is stable if and only if every leading principal minor of $H_n$ is positive.
 For instance, for $n = 5$ we have
$$H_5 \,\,=\,\, \left( \begin{array}{ccccc}  x_1 & x_3 & x_5 & 0 & 0 \\ x_0 & x_2 & x_4 & 0 & 0 \\ 0 & x_1 & x_3 & x_5 & 0 \\ 0 & x_0 & x_2 & x_4 & 0 \\ 0 & 0 & x_1 & x_3 & x_5 \end{array} \right).$$
The ratio $\bar{\Gamma}_n = {\rm det}(H_n)/x_n$, which is the $(n-1)$st leading principal minor of $H_n$,
 is a homogeneous polynomial in the variables $x_0,\ldots, x_{n-1}$ of degree $n-1$.
Let $\Gamma_n$ denote the non-homogeneous polynomial
obtained by setting $x_0 = 1$ in $\bar{\Gamma}_n$.
We refer to $\Gamma_n$ resp.~$\bar{\Gamma}_n$ as
 the non-homogeneous resp.~homogeneous
{\em Hurwitz determinant}. Table \ref{tab:Hurwitz1}
 shows the ED degrees and the average ED degrees
 of both $\Gamma_n$ and $\bar{\Gamma}_n$
 for some small values of $n$.
The average ED degree was computed with respect to the
standard multivariate Gaussian distribution
in $\R^n$ or $\R^{n+1}$ centered
at the origin. For  the formal definition of
$\aED( \,\cdot \,)$ see Section~\ref{sec:Correspondence}.
The first two columns in Table \ref{tab:Hurwitz1}
seem to be oscillating by parity. Theorem \ref{thm:Hurwitz2}
explains this. Interestingly, the oscillating behavior
does not occur for average ED degree.
\hfill $\diamondsuit$
\end{example}

\begin{theorem}
\label{thm:Hurwitz2}
The ED degrees of the Hurwitz determinants
are given by the following table:
 $$
\begin{tabular}{l|cc}
& $\ED(\Gamma_n)$ & $\ED(\bar{\Gamma}_n)$ \\
\hline
$n=2m+1$ & $8m-3$ & $4m-2 $ \\
$n=2m$   &    $4m-3$ & $8m-6 $
\end{tabular}
 $$
\end{theorem}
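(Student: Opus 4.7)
The plan is to use the parametric framework \eqref{eq:para}, exploiting the classical factorization of the Hurwitz determinant in terms of pairwise root sums. By the theorem of Orlando, if $x(z) = x_0 \prod_{i=1}^n (z - r_i)$, then $\det(H_n)$ is a nonzero scalar multiple of $x_0^n \prod_{1 \le i < j \le n}(r_i + r_j)$, so the vanishing locus of $\bar{\Gamma}_n$ consists of polynomials having two roots that sum to zero. I would therefore parametrize the Hurwitz hypersurface by setting $r_1 = a$, $r_2 = -a$ and leaving $r_3,\dots,r_n$ free, obtaining a rational map
\[
\psi \colon \C^{n-1} \longrightarrow V(\bar{\Gamma}_n), \qquad (a, r_3, \dots, r_n) \longmapsto \text{coefficient vector of } (z^2-a^2)\prod_{i=3}^n (z - r_i).
\]
This map is generically $k$-to-one onto its image with $k = 2 \cdot (n-2)!$: a factor $2$ from the involution $a \leftrightarrow -a$, and $(n-2)!$ from the symmetric group $S_{n-2}$ permuting $r_3, \dots, r_n$. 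Pulling back the squared distance from a \generic{} $u$ gives a polynomial $D_u(a, r_3, \dots, r_n)$, and by Lemma~\ref{lem:EDd} combined with~\eqref{eq:para}, $\ED(\bar{\Gamma}_n)$ equals $1/k$ times the number of critical points of $D_u$ at which the Jacobian of $\psi$ has maximal rank.

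Next I would analyze the critical system. Because the elementary symmetric functions of $a, -a, r_3, \dots, r_n$ depend on $a$ only through $a^2$, the partial derivative $\partial D_u/\partial a$ factors as $a \cdot P(a^2, r_3, \dots, r_n)$. The critical points therefore split into two strata: those with $a = 0$, where $\psi$ fails to be immersive (the image lies in the stratum of polynomials with a double root at $0$) and which must be discarded, and those with $a \ne 0$, counted by a B\'ezout-style calculation in the variables $(a^2, r_3, \dots, r_n)$ applied to $P = 0$ together with $\partial D_u/\partial r_i = 0$ for $i = 3, \dots, n$. Converting between power sums of the $r_i$ and the elementary symmetric functions via Newton's identities should simplify the degree analysis; the resulting count, divided by $k$, yields $\ED(\bar{\Gamma}_n)$. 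The affine version $\Gamma_n$ is then handled either by running the same argument with $x_0$ set equal to $1$ throughout, or by invoking Theorem~\ref{thm:affineproj} which relates the projective ED degree of an affine cone to the ED degree of its hyperplane section $\{x_0 = 1\}$ via the intersection with the hyperplane at infinity and the isotropic quadric $Q$.

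The main obstacle will be the multiplicity bookkeeping that produces the parity split between $n = 2m$ and $n = 2m + 1$. I expect that when $n$ is even, the hypersurface $V(\bar{\Gamma}_n)$ contains the linear subspace of even polynomials $\{x_1 = x_3 = \cdots = x_{n-1} = 0\}$ as a distinguished stratum on which $\psi$ has positive-dimensional fibers, and the critical points lying on it must be excised from the na\"ive B\'ezout count; a structurally analogous but dimensionally transposed degenerate locus should appear when $n$ is odd after the dehomogenization to $\Gamma_n$. This is the anticipated geometric source of the swap between $\ED(\bar{\Gamma}_n)$ and $\ED(\Gamma_n)$ observed in the table. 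Collecting the B\'ezout count, subtracting the contribution of each degenerate stratum, and applying the affine-to-projective correction of Theorem~\ref{thm:affineproj} should then reproduce the four values $8m - 3$, $4m - 3$, $4m - 2$, $8m - 6$.
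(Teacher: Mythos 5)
Your opening move---using Orlando's factorization $\bar\Gamma_n \doteq x_0^{n-1}\prod_{i<j}(r_i+r_j)$ to parametrize $V(\bar\Gamma_n)$ by polynomials with a root pair $\{a,-a\}$---captures the same geometric content as the paper's factorization $x(z)=(cz^2+d)(b_0z^{n-2}+\cdots+b_{n-2})$. But from there your route diverges in a way that costs you the tractability the paper relies on, and leaves a real gap.

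The paper parametrizes the cofactor by its \emph{coefficients} $b=(b_0,\ldots,b_{n-2})$, so that $x=Cb$ is \emph{linear} in $b$ (with $C=C(c,d)$ a banded matrix). This linearity is the whole engine: the normal equation $C^T(u-Cb)=0$ solves $b$ explicitly as $b=(C^TC)^{-1}C^Tu$, a rational function of $(c:d)$, and the remaining two scalar conditions collapse to a single binary form on $\PP^1$. The ED degree then drops out as $2N-2$, where $N=\deg\gamma$ and $\gamma$ is the denominator of $(C^TC)^{-1}$. Your parametrization instead uses the \emph{roots} $r_3,\ldots,r_n$ of the cofactor. This injects the $(n-2)!$ symmetry factor you then have to divide out, makes the coordinates of $\psi$ elementary symmetric polynomials of degree up to $n-2$ in the $r_i$, and so destroys the linear structure; the critical system $\partial D_u/\partial a = \partial D_u/\partial r_i = 0$ becomes a dense system of high degree in $(a^2,r_3,\ldots,r_n)$ with no explicit solve. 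Your hint to ``convert to power sums via Newton's identities'' and do a ``B\'ezout-style calculation'' is precisely the step that is not carried out, and it is not at all clear how to control the excess intersection at the discriminantal and permutation-fixed strata. The proposal as written does not get to a number.

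Moreover, the conjectured mechanism for the parity split is not the right one. You predict it comes from an even-polynomial linear stratum inside $V(\bar\Gamma_n)$ that has to be excised from a na\"ive B\'ezout count. In the paper the split has nothing to do with an excess component: it comes entirely from the block structure of the $(n-1)\times(n-1)$ matrix $C^TC$, which decomposes by parity of index. When $n=2m+1$ the two blocks are equal-sized and identical, so $\gamma=\det(\text{block})$ has degree $2m$; when $n=2m$ the blocks have sizes $m$ and $m-1$ and are distinct, so $\gamma$ is a product of two determinants of degree $4m-2$. It is this jump in $\deg\gamma$ that produces $4m-2$ vs.\ $8m-6$. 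Your plan would need to discover this phenomenon in disguise, and the stratum you point to is not the explanation. The non-homogeneous case $\Gamma_n$ is handled in the paper by the same argument with $x_0=b_0=c=1$, not via Theorem~\ref{thm:affineproj} (which you propose as a fallback; it would require verifying the transversality hypotheses, which as Table~\ref{tab:Hurwitz1} and the discussion around it show actually fail here). In short: keep the cofactor in coefficient coordinates, solve the first block of normal equations linearly, and look at $C^TC$---that is where the theorem lives.
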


\begin{proof}
The hypersurface $X  = V(\bar{\Gamma}_n)$ defines the boundary of the stability region.
If a polynomial $x(z)$ lies on $X$, then it has a complex root on the imaginary axis, so
it admits a factorization
$x(z) = (c z^2+d)(b_0 z^{n-2}+\cdots+b_{n-2})$. This representation
yields a parametrization of the hypersurface $X \subset \PP^n$ with parameters
$b_0,\ldots,b_{n-2},c,d$. We can rewrite this~as
\[
x :=\begin{bmatrix} x_0 \\ x_{1} \\ \vdots \\ x_n \end{bmatrix}\,\,=\,\,
\begin{bmatrix}
c & & & &\\
0 & c & & &\\
d & 0 & c & &\\
  & \ddots & \ddots & \ddots &\\
  & & d & 0 & c \\
  & & & d & 0 \\
  & & & & d
\end{bmatrix}
\cdot \begin{bmatrix} b_0 \\ b_1 \\ \vdots \\ b_{n-2}
\end{bmatrix} \,\,=:\,\, C \cdot b.
\]
Where this parametrization is regular and $x$ is a smooth point of $X$,
the tangent space $T_x X$ is spanned by the columns of $C$
and the vectors $b',b''$ obtained by appending or prepending two zeros to
$b$, respectively. Thus, for $u \in \C^{n+1}$,
the condition $u-Cb \perp T_x X$ translates into
\[ C^T (u-Cb)=0 \,\,\, \text{ and }
\,\,\, (b')^T (u-Cb)=0 \,\,\, \text{ and } \,\,\, (b'')^T (u-Cb)=0. \]
The first equation expresses $b$ as a rational homogenous
function in $c,d$, namely, $b=b(c,d)=(C^T C)^{-1} C^T u$. By Cramer's
rule, the entries of the inverse of a matrix are homogeneous rational
functions of degree $-1$.  Hence the degree
of $b(c,d)$  equals $-2+1=-1$. Let $\gamma=\gamma(c,d)$ be the
denominator of $(C^T C)^{-1}$, i.e., the lowest-degree polynomial in $c,d$
for which $\gamma \cdot (C^T C)^{-1}$ is polynomial; and let $N$ be the
degree of $\gamma$. Then $\gamma b'$ has entries that are homogeneous
polynomials in $c,d$ of degree $N-1$. Similarly, $\gamma u-\gamma Cb$
has degree $N$. Hence
\[ p(c,d)\,:=\,(\gamma  b') \cdot (\gamma u- \gamma Cb) \quad \text{ and  }  \quad
q(c,d) \,:=\,(\gamma  b'') \cdot (\gamma u- \gamma Cb)
\]
are homogeneous polynomials of degree $2N-1$ that vanish on the
desired  points $(c:d) \in \PP^1$.
 Indeed, if $p$ and $q$ vanish
on $(c:d)$ and $\gamma(c,d)$ is non-zero, then there is a unique $b$
that makes $(b,c,d)$ critical for the data $u$. It turns out that $p$ is divisible
by $d$, that $q$ is divisible by $c$, and that $p/d=q/c$. Thus $2N-2$ is an
upper bound for $\ED(X)$.

To compute $\gamma$, note that $C^T C$ decomposes into two blocks,
corresponding to even and odd indices. When $n=2m+1$
is odd, these two blocks are identical, and $\gamma$ equals their
determinant, which is $c^{2m} + c^{2m-2}d^2 + \cdots + d^{2m}$.
Hence $N = 2m$. When
$n=2m$ is even, the two blocks are distinct, and $\gamma$ equals the
product of their determinants, which is $(c^{2m} + c^{2m-2}d^2 + \cdots
+ d^{2m})(c^{2m-2}+\cdots+d^{2m-2})$.
Hence $N = 4m-2$.
 In both cases one can check that
$p/d$ is irreducible, and this implies that $\ED(X)=2N-2$.
This establishes the stated formula for ${\rm EDdegree}(\bar \Gamma_n)$.
A similar
computation can be performed in the non-homogeneous case,
by setting $x_0 = b_0 = c = 1$, leading to the formula for
${\rm EDdegree}(\Gamma_n)$.
\end{proof}

\begin{example}[Interacting agents]
This concerns a problem we learned from  work of
Anderson and Helmke \cite{AH}.
Let $X$ denote the variety in $\R^{\binom{p}{2}}$
with parametric representation
\begin{equation}
\label{eq:CMpara}
 d_{ij}\,\, = \,\,(z_i-z_j)^2 \quad \hbox{for} \quad 1 \leq i < j \leq p.
 \end{equation}
Thus, the points in $X$
record the squared distances among $p$
interacting agents with coordinates  $z_1,z_2,\ldots,z_p$ on the line $\R^1$.
Note that $X$ is the cone over a projective variety in $\PP^{{\binom{p}{2}}-1}$.
The prime ideal of $X$ is given by the $2 \times 2$-minors of the
{\em Cayley-Menger matrix}
\begin{equation}
\label{eq:CMmatrix}
\begin{bmatrix}
2 d_{1p} & d_{1p} {+} d_{2p} {-} d_{12} &  d_{1p} {+} d_{3p} {-} d_{13} &
  \cdots &  d_{1p} {+} d_{p-1,p} {-} d_{1,p-1} \\
 d_{1p} {+} d_{2p} {-} d_{12} & 2 d_{2p} &  d_{2p} {+} d_{3p} {-} d_{23} &
  \cdots &  d_{2p} {+} d_{p-1,p} {-} d_{2,p-1} \\
   d_{1p} {+} d_{3p} {-} d_{13} &  d_{2p} {+} d_{3p} {-} d_{23} & 2 d_{3p} &
   \cdots &  d_{3p} {+} d_{p-1,p} {-} d_{3,p-1} \\
   \vdots & \vdots & \vdots & \ddots & \vdots \\
   d_{1p} {+} d_{p-1,p} {-} d_{1,p-1} \! & \!
   d_{2p} {+} d_{p-1,p} {-} d_{2,p-1} \! & \!
   d_{3p} {+} d_{p-1,p} {-} d_{3,p-1} \! &  \!
     \cdots &    2 d_{p-1,p}
 \end{bmatrix}
\end{equation}
Indeed, under the parametrization (\ref{eq:CMpara}),
the $(p-1) \times (p-1)$ matrix (\ref{eq:CMmatrix}) factors as $2Z^T  Z$,
where $Z$ is the row vector
$(z_1{-}z_p,z_2{-}z_p, z_3{-}z_p,\ldots,z_{p-1}{-}z_p)$.
We can form the Cayley-Menger matrix  (\ref{eq:CMmatrix})
for any finite metric space on $p$ points.
The metric space can be embedded in a Euclidean space
if and only if (\ref{eq:CMmatrix}) is positive semidefinite \cite[(8)]{Lau}.
That Euclidean embedding is possible in dimension $r$
if and only if the rank of $(\ref{eq:CMmatrix})$ is at most~$r$.

The following theorem is inspired by \cite{AH} and
provides a refinement of results therein.
In particular, it explains the findings
in \cite[\S 4]{AH}~for~$p \leq 4$. There is an extra factor of $1/2$
because of the involution $z \mapsto -z$ on the fibers of
the map (\ref{eq:CMpara}). For instance, for $p  = 4$,
our formula gives ${\rm EDdegree}(X) = 13$ while
\cite[Theorem 13]{AH} reports $26$ non-zero critical points.
The most interesting case occurs when
$p$ is divisible by $3$, and this will be explained in the proof.
\end{example}

\begin{theorem}
The ED degree of the Cayley-Menger variety $X \subset \PP^{\binom{p}{2}-1}$ equals
\begin{equation}
\label{eq:EDCM}
 {\rm EDdegree}(X) \,\, = \,\,
 \begin{cases}
  \frac{3^{p-1} - 1}{2} & \hbox{ if $\,p \equiv 1,2 \!\mod 3$} \\
  \frac{3^{p-1} - 1}{2} - \frac{p!}{3 ((p/3)!)^3}& \hbox{ if $\,p \equiv 0 \! \mod 3$}
 \end{cases}
 \end{equation}
  \end{theorem}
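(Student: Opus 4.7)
The plan is to work via the parametrization $\psi:\C^{p-1}\to X$, $(z_1,\ldots,z_{p-1})\mapsto ((z_i-z_j)^2)_{1\le i<j\le p}$ with $z_p:=0$, which is generically $2$-to-$1$ via $z\mapsto -z$ (the translation symmetry having been gauge-fixed). By the discussion after~\eqref{eq:para}, $2\cdot\ED(X)$ equals the number of regular critical points of $D_u(z)=\sum_{i<j}((z_i-z_j)^2-u_{ij})^2$ in $\C^{p-1}$ for \generic{} $u$. Setting $F_k:=\tfrac14\partial D_u/\partial z_k=\sum_{j\ne k}(z_k-z_j)^3-\sum_{j\ne k}u_{kj}(z_k-z_j)$, these form a system of $p-1$ cubics in $p-1$ variables, so B\'ezout in $\PP^{p-1}$ bounds the total number of projective solutions (with multiplicity) by $3^{p-1}$. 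The proof then accounts for the excess between $3^{p-1}$ and $2\cdot\ED(X)$.

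I would first show that the origin $z=0$ is the unique extra affine contribution, entering B\'ezout with multiplicity~$1$ but being non-regular for $\psi$. Indeed $F_k(0)=0$ for every $u$, and the Jacobian of $(F_1,\ldots,F_{p-1})$ at $0$ is (up to sign) the reduced Laplacian of the weighted complete graph $K_p$ with edge weights $u_{ij}$; by the matrix-tree theorem its determinant is a nonzero linear combination of spanning-tree weights, so the origin is a simple zero for \generic{} $u$. Since $d\psi(0)=0$, the origin is excluded from the regular critical points, and a dimension check shows that imposing any further identification $z_i=z_j$ makes the cubic system overdetermined for \generic{} $u$, so no other non-regular critical point contributes.

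Next I would classify the solutions at infinity, namely common zeros in $\PP^{p-2}$ of the leading cubic forms $G_k(w)=\sum_{j\ne k}(w_k-w_j)^3$ (with $w_p:=0$). Using translation-invariant coordinates $a_k=w_k-\bar w$ (so $\sum_k a_k=0$), the identities $\sum_{j\ne k}a_j=-a_k$, $\sum_{j\ne k}a_j^2=p_2-a_k^2$, and $\sum_{j\ne k}a_j^3=p_3-a_k^3$ yield the key identity
\[ \sum_{j\ne k}(a_k-a_j)^3 \;=\; p\,a_k^3+3p_2 a_k-p_3, \]
where $p_2=\sum a_j^2$ and $p_3=\sum a_j^3$. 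Hence \emph{every} $a_k$ is a root of the single cubic $pX^3+3p_2 X-p_3$. Letting $r_1,r_2,r_3$ denote its roots (with $r_1+r_2+r_3=0$) and $m_1,m_2,m_3$ their multiplicities among the $a_k$, combining Vieta with $p_2=\sum m_i r_i^2$ and $p_3=\sum m_i r_i^3$ and a case analysis forces the only nontrivial configurations to be $m_1=m_2=m_3=p/3$ together with $(r_1,r_2,r_3)=t(1,\omega,\omega^2)$ for some $t\ne 0$, where $\omega=e^{2\pi i/3}$. Thus nontrivial infinity solutions exist only when $p\equiv 0\pmod 3$, and are enumerated by colorings $\{1,\ldots,p\}\to\{r_1,r_2,r_3\}$ with each color used $p/3$ times, modulo the free $\Z/3$-action of $\omega$-scaling, giving $\#\infty=\frac{p!}{3((p/3)!)^3}$ distinct projective solutions.

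Finally I would show that each infinity solution has B\'ezout multiplicity exactly~$2$. Decomposing the Jacobian $J$ of $(G_k)_{k=1}^{p-1}$ at such a point as $J=D+N$, where $D$ is block-diagonal with scalar blocks $3pr_i^2\cdot I$ on each class $S_i$ and $N$ has rank $\le 3$ with constant off-diagonal-block entries $-3(r_i-r_{i'})^2$, the matrix-determinant lemma reduces $\det J$ to the determinant of an explicit $3\times 3$ matrix $I+M$ depending only on $\omega$ and $\alpha=(p-3)/p$. A direct expansion shows $\det(I+M)=0$ identically while a $2\times 2$ minor equals $3/p\ne 0$, so $\mathrm{corank}(J)=1$ and the multiplicity is at least~$2$; a local second-order check then gives multiplicity exactly~$2$. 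Assembling the three contributions yields $3^{p-1}=2\ED(X)+1+2\#\infty$, which rearranges to the claimed formula. The main obstacle is verifying the multiplicity-$2$ claim uniformly in $p$; the reduction via the class decomposition to an explicit $3\times 3$ determinant is what makes this tractable.
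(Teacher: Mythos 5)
Your proof takes a genuinely different route from the paper's. The paper makes the linear change of coordinates $x_{ii}=2d_{ip}$, $x_{ij}=d_{ip}+d_{jp}-d_{ij}$ so that $X$ becomes the second Veronese of $\PP^{p-2}$, invokes the Chern-class formula (Proposition~\ref{prop:Veronese}) for generic coordinates to get $(3^{p-1}-1)/2$, and then corrects for the non-transversal intersection $X\cap Q$ by analyzing the singular locus of the quartic $f=\sum_{i<j}(z_i-z_j)^4$ in $\PP^{p-2}$, subtracting one for each of its nodes. You instead run affine B\'ezout on the parametrization and account directly for the origin and the solutions at infinity. The two proofs converge on the same combinatorial object, since your leading forms satisfy $G_k=\tfrac14\,\partial f/\partial z_k$, hence by Euler's relation the common zeros of the $G_k$ in $\PP^{p-2}$ are precisely the singular points of $V(f)$; both proofs must count and understand these, and both get $p!/(3((p/3)!)^3)$. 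What your route buys is an elementary, self-contained argument that avoids both the Chern-class machinery and the implicit appeal to the fact that an ordinary node of $X\cap Q$ lowers the ED degree by exactly one; what it costs is the multiplicity-$2$ computation, which the paper outsources to that fact.

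A few remarks on making the sketch watertight. Your classification at infinity does go through: once you reduce to $\sum m_i r_i=0$, $p_2=pe_2/3$, $p_3=pe_3$ with $r_1+r_2+r_3=0$, the third relation turns out to be automatic, and writing $\alpha=m_1-m_3$, $\beta=m_2-m_3$ the second becomes $9m_3(\alpha^2-\alpha\beta+\beta^2)=-(\alpha+\beta)^3$; combining $\alpha^2-\alpha\beta+\beta^2\ge\tfrac14(\alpha+\beta)^2$ with $m_1,m_2,m_3\ge 1$ yields $\tfrac{\sigma}{2}+1\le m_3\le\tfrac{4\sigma}{9}$ for $\sigma=-(\alpha+\beta)$, which is impossible, so $\alpha=\beta=0$. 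For the non-regular affine contribution, the reason the origin is the unique such point is simply that $J\psi$ has rank $p-1$ whenever $z\ne 0$ (the rows $\partial d_{ip}/\partial z_k=2z_i\delta_{ik}$ already suffice when all $z_i\ne 0$, and the remaining cases are handled by the $d_{ij}$ rows); the phrase about ``further identifications $z_i=z_j$ making the system overdetermined'' is not quite the right reason. Finally, the multiplicity-$2$ claim is the real work but it is sound: the Jacobian of $(G_k)$ is (the truncation of) a weighted graph Laplacian with edge weights $3(w_k-w_\ell)^2$, its kernel in $\C^p$ is exactly $\langle\mathbf{1},w\rangle$ because the induced $3\times3$ ``class Laplacian'' has spanning-tree polynomial $\lambda_{01}\lambda_{02}+\lambda_{01}\lambda_{12}+\lambda_{02}\lambda_{12}=0$ but nonzero entries, giving corank exactly $1$; then the second-order obstruction to multiplicity $\ge 3$ is a linear form in $u$, and since the infinity points are independent of $u$, that form is not identically zero, so for generic $u$ each point contributes exactly $2$.
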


\begin{proof}
After the linear change of coordinates given by $x_{ii} = 2 d_{ip} $ and
$x_{ij} = d_{ip} + d_{jp} - d_{ij}$, the Cayley-Menger variety $X$
agrees with the variety of symmetric $(p-1) \times (p-1)$-matrices of rank $1$.
This is the Veronese variety of order $d=2$. The number $(3^{p-1}-1)/2$ is a special
instance of the formula in Proposition \ref{prop:Veronese}.
To show that it is valid here, we need to prove that
$X$ intersects the isotropic quadric $Q$ transversally,
i.e., the intersection $X \cap Q$ is non-singular.
If there are isolated nodal singular points, then their
number gets subtracted.

The parametrization (\ref{eq:CMpara}) defines
the second Veronese embedding
$\PP^{p-2} \rightarrow X \subset  \PP^{\binom{p}{2}-1}$,
where $\PP^{p-2}$ is the projective space of the quotient
$\C^p/\C \cdot (1,\ldots,1)$.
So $X \cap Q$ is isomorphic to its inverse image in
$\PP^{p-2}$
under this map. That inverse image
 is the hypersurface in $\PP^{p-2}$ defined by
 the homogeneous quartic $\, f = \sum_{1 \leq i < j \leq p} (z_i - z_j)^4$.
 We need to analyze the singular locus of the hypersurface
 $V(f)$ in $\PP^{p-2}$, which is the variety defined by all
 partial derivatives of $f$. Arguing modulo $3$
 one finds that if $p$ is not divisible by $3$ then $V(f)$ is
 smooth, and then we have ${\rm EDdegree}(X) =(3^{p-1}-1)/2$.
If $p$ is divisible by $3$ then $V(f)$ is not smooth,
but $V(f)_{\mathrm sing}$ consists of isolated nodes that
form one orbit under
 permuting coordinates. One representative is the point in $\PP^{p-2}$
 represented by the vector
$$ \bigl(0,0, \ldots , 0, \,1,1, \ldots ,1, \, \xi, \xi, \ldots , \xi \bigr) \in \C^p
\quad \hbox{where} \,\,\,\,\xi^2-\xi+1 = 0 . $$
The number of singular points of the quartic hypersurface $V(f)$ is equal to
$$ \frac{p!}{3 \cdot ((p/3)!)^3}. $$
For $p>0$ this is the number of words that start with the first letter of the ternary alphabet
$\{0,1,\xi\}$ and that contain each letter exactly $p$ times; see \cite[A208881]{OEIS}.
\end{proof}

\section{ED correspondence and average ED degree}
\label{sec:Correspondence}

The ED correspondence arises when
the variety $X$ is fixed but the data point $u$ varies. After studying
this, we restrict to the real numbers, and we introduce the average ED
degree,
making precise a notion that was hinted at in Example \ref{ex:hurwitz}.
The ED correspondence yields an integral formula for  $\aED(X)$.
This integral
can sometimes be evaluated in closed form. In other cases, experiments
show that evaluating the integral numerically is more efficient than
estimating $\aED(X)$ by sampling $u$ and counting real critical points.

We start with an irreducible affine variety $X \subset \C^n$ of
codimension $c$ that is defined over $\R$,
with prime ideal $I_X = \langle f_1,\ldots,f_s \rangle$
 in $\R[x_1,\ldots,x_n]$.
The {\em ED correspondence} $\cE_X$ is the
subvariety of $\C^n \times \C^n$ defined by the ideal \eqref{eq:critideal}
in the polynomial ring $\R[x_1,\ldots,x_n,u_1,\ldots,u_n]$. Now, the
$u_i$ are unknowns that serve as coordinates on the second
factor in $\C^n \times \C^n$.  Geometrically, $\cE_X$ is the
topological closure in $\C^n \times \C^n$ of the set of pairs $(x,u)$
such that $x \in X \backslash X_{\rm sing}$ is a critical
point of $d_u$. The following theorem implies and enriches Lemma~\ref{lem:EDd}.

\begin{theorem} \label{thm:EDaffine}
The ED correspondence $\mathcal{E}_X$ of an irreducible
subvariety $X \subseteq \C^n$ of codimension $c$
is an irreducible variety of
dimension $n$ inside $\C^n \times \C^n$. The first projection $ \pi_1 :
\mathcal{E}_X \rightarrow X \subset \C^n$ is an affine vector bundle
of rank $c$ over $X \backslash X_{\rm sing}$.   Over \generic{} data points
$u \in \C^n$, the second projection $\pi_2 : \mathcal{E}_X \rightarrow
\C^n$ has finite fibers $\pi_2^{-1}(u)$ of cardinality equal to $\ED(X)$.
If, moreover, we have $T_x X \cap (T_x X)^\perp = \{0\}$
at some point $x \in X \backslash \Xsing$, then $\pi_2$ is
a dominant map and $\,\ED(X)$ is positive.
\end{theorem}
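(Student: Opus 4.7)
The plan is to prove the four assertions in order, exploiting the explicit description of $\cE_X$ through the ideal~(\ref{eq:critideal}) and the interplay between $\pi_1$ and $\pi_2$.

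First I would analyze the fibers of $\pi_1$ over the smooth locus. Fix $x \in X \setminus \Xsing$; then the Jacobian $J(f)|_x$ has rank exactly $c$, so its row span is a $c$-dimensional subspace of $\C^n$ which (by definition of the tangent space) equals the conormal space $(T_x X)^\perp$ with respect to the standard complex bilinear form. The $(c+1)\times(c+1)$ minor condition in (\ref{eq:critideal}) on $\begin{pmatrix} u-x \\ J(f)\end{pmatrix}$ then says precisely that $u-x \in (T_x X)^\perp$, so
\[
 \pi_1^{-1}(x) \;=\; x + (T_x X)^\perp,
\]
an affine subspace of dimension $c$. Letting $x$ vary shows that $\pi_1$ restricts to an affine vector bundle of rank $c$ over $X \setminus \Xsing$, which is the second assertion. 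Since $X \setminus \Xsing$ is irreducible of dimension $n-c$, the total space $\pi_1^{-1}(X \setminus \Xsing)$ is irreducible of dimension $n$; the saturation by $I_{\Xsing}$ in (\ref{eq:critideal}) ensures that $\cE_X$ is exactly the closure of this total space, yielding the first assertion.

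Next I would turn to $\pi_2$. Since $\dim \cE_X = n = \dim \C^n$, every non-empty generic fiber of $\pi_2$ is zero-dimensional, hence finite. For generic $u$, Lemma~\ref{lem:EDd} identifies the set underlying $\pi_2^{-1}(u)$ with the critical points of $d_u$ on $X \setminus \Xsing$, and by definition this set has cardinality $\ED(X)$. The potential contribution from the boundary $\cE_X \setminus \pi_1^{-1}(X \setminus \Xsing)$, a proper subvariety of dimension at most $n-1$, is avoided for generic $u$ by a standard generic position argument. This settles the third assertion.

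The most delicate step is the dominance claim in the fourth assertion. Assume $T_x X \cap (T_x X)^\perp = \{0\}$ at some $x \in X \setminus \Xsing$; a dimension count then forces $T_x X + (T_x X)^\perp = \C^n$. Note that $(x,x) \in \cE_X$ since $0 \in (T_x X)^\perp$. I would produce two families of curves through $(x,x)$ lying in $\cE_X$: the diagonal curves $t \mapsto (\alpha(t), \alpha(t))$ with $\alpha(t) \in X$ and $\alpha(0)=x$ (for which $u(t)-x(t) = 0$ is trivially conormal), and the fiber curves $t \mapsto (x, x + tw)$ with $w \in (T_x X)^\perp$. These contribute tangent vectors $(v,v)$ for arbitrary $v \in T_x X$ and $(0,w)$ for arbitrary $w \in (T_x X)^\perp$ to $T_{(x,x)} \cE_X$. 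Pushing forward under $d\pi_2$, the image contains $T_x X + (T_x X)^\perp = \C^n$, so $\pi_2$ is submersive at $(x,x)$ and thus dominant; together with the third assertion this gives $\ED(X) \geq 1$. The main obstacle throughout is keeping track of the interaction between the saturation in (\ref{eq:critideal}) and the passage to the smooth locus, which is why the entire argument is cleanest when carried out on $\pi_1^{-1}(X \setminus \Xsing)$ first and then extended by closure.
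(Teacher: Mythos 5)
Your proposal is correct and follows essentially the same route as the paper's proof: identify $\pi_1^{-1}(x)$ with $\{x\} \times (x + (T_x X)^\perp)$ to get the rank-$c$ affine bundle, deduce irreducibility and $\dim \cE_X = n$, conclude generic finiteness of $\pi_2$, and then argue dominance by showing that $T_{(x,x)}\cE_X$ surjects onto $T_xX + (T_xX)^\perp = \C^n$ under $d\pi_2$. The paper states the two relevant subspaces of $T_{(x,x)}\cE_X$ directly (the diagonal tangent $\Delta(T_xX)$ and the fiber tangent $\{0\}\times(T_xX)^\perp$), whereas you produce them via explicit curves, but this is only a cosmetic difference.
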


In our applications, the variety $X$ always has real points that are smooth,
i.e.~in $X\backslash X_{\rm sing}$.
If this holds, then the last condition in Theorem \ref{thm:EDaffine}
is automatically satisfied: the tangent space at such a point is real and
intersects its orthogonal complement trivially.
 But, for instance, the hypersurface $ Q = V(x_1^2 + \cdots + x_n^2)$
 does not satisfy this condition: at any point $x \in Q$
the tangent space $T_x Q=x^\perp$ intersects its orthogonal
complement $\C x$ in all of $\C x$.

\begin{proof}
The affine vector bundle property follows directly from the
system~(\ref{eq:sys1}) or, alternatively, from the matrix representation
(\ref{eq:critideal}): fixing $x \in X \backslash X_{\rm sing}$, the fiber
$\pi_1^{-1}(x)$ equals $\{x\} \times (x+(T_x X)^\perp)$, where the second
factor is an affine  space of dimension $c$ varying smoothly
with $x$. Since $X$ is irreducible, so is $\cE_X$, and its dimension
equals $(n-c)+c=n$. For dimension reasons, the projection $\pi_2$ cannot
have positive-dimensional fibers over \generic{} data points $u$, so those
fibers are generically finite sets, of cardinality equal to $\ED(X)$.

For the last statement, note  that  the diagonal $\Delta(X):=\{(x,x)
\in \C^n \times \C^n \mid x \in X\}$ is contained in $\cE_X$. Fix a point
$x \in X \backslash \Xsing$ for which
$T_x X \cap (T_x X)^\perp = \{0\}$. Being an affine bundle over $X
\backslash \Xsing$, $\cE_X$ is smooth at the point $(x,x)$. The tangent
space $T_{(x,x)} \cE_X$ contains both the tangent space $T_{(x,x)}
\Delta(X)= \Delta(T_x X)$ and $\{0\} \times (T_xX)^\perp$. Thus the image
of the derivative at $x$ of $\pi_2: \cE_X \to \C^n$ contains both $T_x X$
and $(T_x X)^\perp$. Since these spaces have complementary dimensions
and intersect trivially by assumption, they span all of $\C^n$. Thus
the derivative of $\pi_2$ at $(x,x)$ is surjective onto $\C^n$, and
this implies that $\pi_2$ is dominant.  \end{proof}

\begin{corollary} \label{cor:unirational}
If $X$ is (uni-)rational then so is the ED correspondence $\mathcal{E}_X$.
\end{corollary}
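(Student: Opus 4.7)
The plan is to build an explicit dominant rational map from $\C^n = \C^m \times \C^c$ to $\cE_X$ out of a given dominant rational parametrization $\psi : \C^m \dashrightarrow X$, exploiting the affine vector bundle structure of $\pi_1:\cE_X \to X$ provided by Theorem \ref{thm:EDaffine}.

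First I would fix a dominant rational map $\psi:\C^m \dashrightarrow X$, where $m=n-c=\dim X$, and form its $n\times m$ Jacobian matrix $J\psi(t)$. Since $\psi$ is dominant, $J\psi(t)$ has rank $m$ on a dense open set $U\subseteq \C^m$ whose image lies in $X\setminus X_{\mathrm{sing}}$, and its columns then span the tangent space $T_{\psi(t)}X$. Consequently the normal space $(T_{\psi(t)}X)^\perp$ is the kernel of the $m\times n$ matrix $J\psi(t)^T$, which is a $c$-dimensional subspace of $\C^n$ varying rationally with $t$.

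Next I would produce a rational basis $v_1(t),\ldots,v_c(t)$ of this normal space. After shrinking $U$ if necessary, a fixed $m\times m$ minor of $J\psi(t)$ is nonzero, and Cramer's rule yields a polynomial--in--$t$, Laurent--in--this--minor expression for $c$ linearly independent kernel vectors. With this basis in hand, define
\[
\Psi:\C^m\times \C^c \dashrightarrow \cE_X, \qquad
(t,s_1,\ldots,s_c)\;\longmapsto\; \Bigl(\psi(t),\;\psi(t)+\sum_{i=1}^c s_i v_i(t)\Bigr).
\]
The image lies in $\cE_X$ because, by Theorem \ref{thm:EDaffine}, the fiber $\pi_1^{-1}(\psi(t))$ equals $\{\psi(t)\}\times\bigl(\psi(t)+(T_{\psi(t)}X)^\perp\bigr)$. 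Dominance of $\Psi$ follows because $\psi$ is dominant onto $X$ and, over any $x=\psi(t)$ in $U$, the parametrization $s\mapsto x+\sum s_i v_i(t)$ is a linear isomorphism of $\C^c$ with the affine fiber; dimensions agree on both sides ($m+c=n=\dim \cE_X$), proving unirationality.

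For the rational case, assume $\psi$ is birational with rational inverse $\psi^{-1}:X\dashrightarrow \C^m$. Then $\Psi$ is birational too: given a generic $(x,u)\in \cE_X$, we recover $t=\psi^{-1}(x)$ rationally, and then $(s_1,\ldots,s_c)$ as the unique coordinates of $u-x$ in the basis $v_1(t),\ldots,v_c(t)$, which is again a rational operation (inverting the $c\times c$ Gram-type matrix built from the $v_i(t)$). The main--albeit mild--obstacle is this bookkeeping of rational dependence of the normal frame on $t$; once the Cramer's rule description is in place, everything else is linear algebra over the function field $\C(t)$.
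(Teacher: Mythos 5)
Your proposal is correct and follows essentially the same route as the paper: parametrize the normal space $(T_{\psi(t)}X)^\perp$ rationally via Cramer's rule applied to the Jacobian of $\psi$, and lift $\psi$ to $\cE_X$ by the map $(t,s)\mapsto(\psi(t),\psi(t)+\sum_i s_i v_i(t))$, which is birational exactly when $\psi$ is. One tiny caveat: to recover $s$ rationally it is safer to solve the linear system $u-x=\sum_i s_i v_i(t)$ directly via a nonvanishing $c\times c$ minor rather than inverting the Gram matrix of the $v_i(t)$, since the symmetric form can restrict degenerately to the normal space; this does not affect the validity of your argument.
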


\begin{proof}
Let $\psi: \C^{m} \rightarrow \C^n$ be a rational map that parametrizes
$X$, where $m=\dim X=n-c$. Its Jacobian $J(\psi)$ is an $n \times
m$-matrix of rational functions in the standard coordinates
$t_1,\ldots,t_m$ on $\C^m$.
The columns of $J(\psi)$ span the tangent space of $X$ at the point $\psi(t)$
for \generic{} $t \in \C^m$. The left kernel of $ J(\psi)$ is
a linear space of dimension $c$.  We can write down a basis $\{\beta_1
(t), \ldots,\beta_{c}(t)\}$ of that kernel by applying Cramer's rule
to the matrix $J(\psi)$. In particular, the $\beta_j$ will
also be rational functions in the $t_i$. Now the map
\[ \C^{m} \times \C^{c} \to \cE_X,\ (t,s) \mapsto
\biggl(\psi(t), \psi(t)+\sum_{i=1}^{c} s_i \beta_i(t) \biggr) \]
is a parametrization of $\cE_X$, which is birational
if and only if $\psi$ is birational.
\end{proof}

\begin{example}
The twisted cubic cone $X$ from Example~\ref{ex:twistedcubic} has the
parametrization $\psi: \C^2 \rightarrow  \C^4,\, (t_1,t_2) \mapsto
(t_1^3,t_1^2t_2,t_1t_2^2,t_2^3)$. We saw
that ${\rm EDdegree}(X) = 7$. Here is a parametrization of the ED
correspondence  $\mathcal{E}_X$ that is induced by the construction in the proof above:
\[ \begin{matrix}
 \C^2 \times \C^2  \,\rightarrow \,  \C^4 \times \C^4 \,\, , \quad
((t_1,t_2),(s_1,s_2)) \, \mapsto  \qquad \qquad \qquad \qquad \qquad \qquad \\
\big( (t_1^3 , t_1^2  t_2 , t_1  t_2^2 , t_2^3),\,
    (t_1^3 + s_1 t_2^2, \
    t_1^2 t_2 - 2 s_1 t_1 t_2 + s_2 t_2^2,\
    t_1 t_2^2 + s_1 t_1^2 - 2 s_2 t_1 t_2,\
    t_2^3 + s_2 t_1^2) \big).
\end{matrix}
\]
The prime ideal of $\mathcal{E}_X$ in
 $\R[x_1,x_2,x_3,x_4,u_1,u_2,u_3,u_4]$ can be computed from
  (\ref{eq:critideal}). It is minimally generated by
 seven quadrics and one quartic. It is important to note that these generators are
   homogeneous with respect to the usual $\mathbb{Z}$-grading
 but  not bi-homogeneous.

 The formulation (\ref{eq:critideal2}) leads to the subideal
 generated by all bi-homogeneous polynomials that vanish on
 $\mathcal{E}_X$. It has six minimal generators,
 three of degree $(2,0)$ and three of degree $(3,1)$.
Geometrically, this corresponds to the variety
$\mathcal{P} \cE_X \subset \PP^3 \times \C^4$ we introduce next.
$\diamondsuit$
\end{example}

If $X$ is an affine cone in $\C^n$, we consider the
closure of the image of $\cE_X \cap ((\C^n \backslash \{0\}) \times \C^n)$
 under the map $(\C^n \backslash \{0\}) \times \C^n \to \PP^{n-1}
\times \C^n,\ (x,u) \mapsto ([x],u)$. This closure is called the {\em
projective ED correspondence} of $X$, and it is denoted $\mathcal{P} \cE_X$. It has
the following properties.

\begin{theorem} \label{thm:EDproj}
Let $X \subseteq \C^n$ be an irreducible affine cone not contained in the
isotropic quadric $Q$. Then the projective ED correspondence
$\mathcal{P} \cE_X$ of $X$
is an $n$-dimensional irreducible variety in $\PP^{n-1}
\times \C^n$. It is the zero set of the ideal
\eqref{eq:critideal2}. Its projection onto the projective variety
in $\PP^{n-1}$ given by $X$
is a vector bundle over $X \backslash(\Xsing \cup Q)$ of rank $c+1$.
The fibers over \generic{} data points $u$ of its projection onto $\C^n$
are finite of cardinality equal to $\ED(X)$.
\end{theorem}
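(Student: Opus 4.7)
The plan is to lift Theorem \ref{thm:EDaffine} along the rational map
\[
\phi : \cE_X \cap ((\C^n\setminus\{0\}) \times \C^n) \longrightarrow \PP^{n-1}\times\C^n, \quad (x,u)\mapsto ([x],u),
\]
whose image closure is $\mathcal{P} \cE_X$ by definition. The hypothesis $X\not\subseteq Q$ forces $X\neq\{0\}$, so the source of $\phi$ is a nonempty open subset of the irreducible $n$-dimensional variety $\cE_X$; hence it is irreducible of dimension $n$, and so is $\mathcal{P}\cE_X$. Lemma \ref{lem:critcone} applied pointwise shows that over each $([x],u)$ with $[x]\notin Q\cup \Xsing$ there is a \emph{unique} $\lambda\in\C^*$ with $(\lambda x,u)\in\cE_X$; hence $\phi$ is birational onto its image and $\dim \mathcal{P}\cE_X = n$.

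Next, I would identify $\mathcal{P}\cE_X$ with the zero set $V$ of the ideal \eqref{eq:critideal2} in $\PP^{n-1}\times\C^n$. Lemma \ref{lem:critcone}, applied one $u$ at a time, yields a set-theoretic equality of $V$ and $\mathcal{P}\cE_X$ on the open locus $\{([x],u): x\notin\Xsing,\ [x]\notin Q\}$. The saturation by $I_{\Xsing}\cdot\langle x_1^2+\cdots+x_n^2\rangle$ guarantees that $V$ has no component projecting into $\Xsing\cup Q$; on the other hand $\mathcal{P}\cE_X$ is irreducible and meets the open locus (using Theorem \ref{thm:EDaffine}), so it too has no such component. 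Hence $V$ and $\mathcal{P}\cE_X$ are the Zariski closures of the same constructible set, and coincide.

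For the vector bundle structure, consider $\pi_1:\mathcal{P}\cE_X\to X\subseteq \PP^{n-1}$ over a point $[x]\in X\setminus(\Xsing\cup Q)$. Choosing a representative $x\neq 0$, the fiber of $\phi$ above $[x]$ is $\{(\lambda x,u):\lambda\in\C^*,\ u-\lambda x\in (T_xX)^\perp\}$, using $T_{\lambda x}X=T_xX$ since $X$ is a cone. Pushing forward and taking closure, $\pi_1^{-1}([x])$ equals $\{[x]\}\times(\C x+(T_xX)^\perp)$. Because $X$ is a cone, $x\in T_xX$; and because $[x]\notin Q$, we have $\langle x,x\rangle\neq 0$, so $\C x\cap(T_xX)^\perp=\{0\}$. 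Thus $\C x\oplus(T_xX)^\perp$ is $(c+1)$-dimensional and varies algebraically with $[x]$, yielding the rank $c+1$ vector bundle.

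Finally, the birationality of $\phi$ identifies the generic fibers of $\pi_2:\mathcal{P}\cE_X\to\C^n$ with those of the second projection $\cE_X\to\C^n$, which have cardinality $\ED(X)$ by Theorem \ref{thm:EDaffine}; for generic $u$, no critical point of $d_u$ on $X\setminus\Xsing$ lies on $Q$ or at the origin (the bad $u$ form proper subvarieties of $\C^n$, using $X\not\subseteq Q$). Hence $|\pi_2^{-1}(u)|=\ED(X)$ generically. The most delicate step is the second paragraph: one must verify that the saturations in \eqref{eq:critideal2} precisely remove the $\Xsing$- and $Q$-contributions without cutting into $\mathcal{P}\cE_X$ itself, which reduces to showing that $\mathcal{P}\cE_X$ is not contained in $Q\times\C^n$ — a fact that follows from $X\not\subseteq Q$ and Theorem \ref{thm:EDaffine}.
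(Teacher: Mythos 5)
There is a genuine gap in your second paragraph, and it sits exactly where the paper's proof does its real work. For a point $([x],u)$ in the zero set $V$ of \eqref{eq:critideal2} with $x \notin \Xsing$ and $[x]\notin Q$, Lemma~\ref{lem:critcone} produces a \emph{unique} scalar $\lambda\in\C$ with $u-\lambda x\perp T_xX$, but this $\lambda$ may be zero: that happens precisely when $u\perp T_xX$, and such points form a nonempty $(n-1)$-dimensional subset of $V$ that survives the saturation, since it meets neither $\Xsing$ nor $Q$. At these points there is no $\lambda\in\C^*$ with $(\lambda x,u)\in\cE_X$ (and even reading Lemma~\ref{lem:critcone} as saying that $0=\lambda x$ satisfies the affine critical equations gives you nothing, since your map $\phi$ is undefined at $x=0$). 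So your claim that Lemma~\ref{lem:critcone}, ``applied one $u$ at a time,'' yields set-theoretic equality of $V$ and $\mathcal{P}\cE_X$ on the open locus is unjustified exactly there. The paper's proof is devoted to this case: if $u\perp T_xX$ then $(\epsilon x,\epsilon x+u)\in\cE_X$ for all $\epsilon\neq 0$, and letting $\epsilon\to 0$ shows $([x],u)$ lies in the closure $\mathcal{P}\cE_X$; note the data point moves in this family, which is why no fixed-$u$ argument can replace it. You also locate the ``delicate step'' in the wrong inclusion: $\mathcal{P}\cE_X\subseteq V$ (the saturation not cutting into $\mathcal{P}\cE_X$) is the easy direction; the hard one is $V\subseteq\mathcal{P}\cE_X$ at the $\lambda=0$ points just described.

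The gap is repairable without the paper's degeneration: over the locus $x\notin\Xsing$, $[x]\notin Q$, the fiber of $V$ over $[x]$ is the linear space $\C x\oplus(T_xX)^\perp$ of dimension $c+1$ (your third paragraph essentially computes this), so $V$ restricted to that locus is an irreducible $n$-dimensional linear-space fibration over $X\setminus(\Xsing\cup Q)$ in $\PP^{n-1}$; the sublocus where $\lambda\neq 0$ is open and dense there and is contained in the image of $\phi$, so taking closures gives the missing inclusion. Either this argument or the paper's limit argument must be supplied; as written, the equality of the two constructible sets is asserted rather than proved. The remaining parts of your proposal (irreducibility and dimension, the rank-$(c+1)$ bundle structure, and the generic fiber count) are fine and run parallel to the paper's.
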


\begin{proof}
The first statement follows from Lemma~\ref{lem:critcone}: let $x \in
X \backslash (\Xsing \cup Q)$ and $u \in \C^n$. First, if $(x,u)
\in \cE_X$, then certainly $([x],u)$ lies in the variety of the ideal
\eqref{eq:critideal}. Conversely, if $([x],u)$ lies in the variety of
that ideal, then there exists a (unique) $\lambda$ such that $(\lambda x,
u) \in \cE_X$. If $\lambda$ is non-zero, then this means that $([x],u)$
lies in the projection of $\cE_X$. If $\lambda$ is zero, then $u \perp
T_x X$ and hence $(\epsilon x, \epsilon x+u) \in \cE_X$ for all $\epsilon
\in \C$. The limit of $([\epsilon x],\epsilon x + u)$ for $\epsilon \to
0$ equals $([x],u)$, so the latter point still lies in the closure of
the image of $\cE_X$, i.e., in the projective ED correspondence.
The remaining statements are proved as in the proof of
Theorem~\ref{thm:EDaffine}.
\end{proof}

We now turn our attention to the average ED degree of
a real affine variety $X$ in $\R^n$.
In applications, the data point
$u$ also lies in $\R^n$, and $u^*$ is the unique closest point to
$u$ in $X$. The quantity $\ED(X)$ measures the algebraic complexity of
writing the optimal solution $u^*$ as a function of the data $u$. But
when applying other, non-algebraic methods for finding $u^*$, the
number of {\em real-valued} critical points of $d_u$ for randomly
sampled data $u$ is of equal interest.  In contrast with the number of
complex-valued critical points, this number is typically not constant
for all \generic{} $u$, but rather constant on the connected components of
the complement of an algebraic hypersurface $\Sigma_X \subset \R^n$,
which we call the {\em ED discriminant}. To get, nevertheless, a
meaningful  count of the critical points, we propose to
average over all $u$ with respect to a measure on $\R^n$. In the remainder of
this section, we describe how to compute that average using the ED
correspondence. Our method is particularly useful in the setting of
Corollary~\ref{cor:unirational}, i.e., when $X$ and hence $\cE_X$ have
rational parametrizations.

We equip data space $\R^n$ with a volume form $\omega$ whose associated
density $|\omega|$ satisfies $\int_{\R^n} |\omega|=1$. A common choice for
$\omega$ is the standard multivariate Gaussian $\,\frac{1}{(2\pi)^{n/2}}
e^{-||x||^2/2}\, \dd x_1 \wedge \cdots \wedge \dd x_n$.  This choice is
natural when $X$ is an affine cone: in that case, the origin $0$ is a
distinguished point in $\R^n$, and the number of real critical points
will be invariant under scaling $u$.
Now we ask for the {\em expected number} of critical points of $d_u$ when $u$
is drawn from the probability distribution on $\R^n$ with density $|\omega|$. This
{\em average ED degree} of the pair $(X,\omega)$ is
\begin{equation}
\label{eq:aED1}
\aED(X,\omega)\,\,:= \,\, \int_{\R^n} \#\{\text{real critical points of } d_u \text{ on } X \} \cdot |\omega|.
\end{equation}
In the formulas below, we write
 $\cE_X$ for the set of real points of the ED
correspondence. Using the substitution rule from multivariate calculus,
we rewrite the integral in (\ref{eq:aED1}) as follows:
\begin{equation}
\label{eq:aED2}
 \aED(X,\omega)\,\,\,=\,\,
 \int_{\R^n} \# \pi_2^{-1}(u) \cdot |\omega|
 \,\,\, =  \,\, \int_{\corrR{X}} |\pi_2^*(\omega)|,
\end{equation}
where $\pi_2^*(\omega)$ is the pull-back of the volume form $\omega$
along the derivative of the map $\pi_2$.

\begin{figure}
\begin{center}
\includegraphics{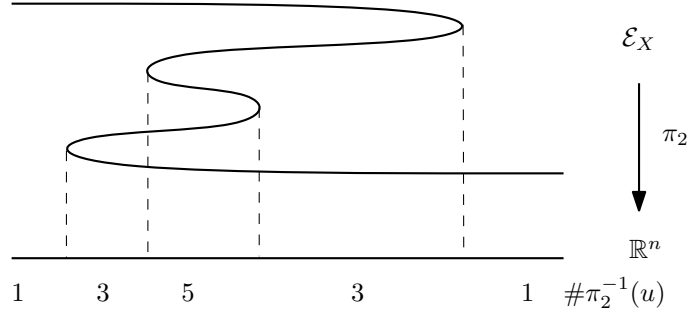}
\caption{The map from the ED correspondence $\corrR{X}$
to data space has four branch points.
The  weighted average of  the fiber sizes $1,3,5,3,1$
can be expressed as an integral over  $\corrR{X}$.} \label{fig:piV}
\end{center}
\end{figure}

See Figure~\ref{fig:piV} for a cartoon illustrating the computation in
(\ref{eq:aED2}). Note that $\pi_2^*(\omega)$ need not be a volume
form since it may vanish at some points---namely, at the {\em ramification
locus} of $\pi_2$, i.e., at points where the derivative of $\pi_2$
is not of full rank. This ramification locus is typically an algebraic
hypersurface in $\corrR{X}$,
and equal to the inverse image of the
ED discriminant~$\Sigma_X$.
The usefulness of the formula  (\ref{eq:aED2}), and a more explicit
version of it to be derived below, will depend on whether the strata
in the complement of the branch locus of $\pi_2$ are easy to describe.
We need such a description because the integrand will be a function
``without absolute values in it'' only on such open strata that lie over
the complement of $\Sigma_X$.

Suppose that we have a parametrization
$\phi:\R^n \dto \corrR{X}$ of the ED correspondence
that is generically one-to-one. For instance, if
$X$ itself is given by a birational parametrization $\psi$, then $\phi$
can be derived from $\psi$ using the method in the proof of Corollary
\ref{cor:unirational}.
We can then write the integral over $\corrR{X}$ in (\ref{eq:aED2})
more concretely as
 \begin{equation}
\label{eq:aED3}
\int_{\corrR{X}} |\pi_2^*(\omega)| = \int_{\R^n} |\phi^*
\pi_2^*(\omega)| = \int_{\R^n} |\det \TT_t (\pi_2 \circ \phi)|
\cdot f(\pi_2(\phi(t))) \cdot \dd t_1 \wedge \cdots \wedge \dd t_n.
\end{equation}
Here $f$ is the smooth (density) function on $\R^n$ such that $\omega_u=f(u) \cdot
\dd u_1 \wedge \cdots \wedge \dd u_n$. In the standard Gaussian case,
this would be $f(u) =  e^{-||u||^2/2}/(2\pi)^{n/2}$.  The determinant
in (\ref{eq:aED3}) is taken of the differential of $\pi_2 \circ \phi$.
To be fully explicit, the composition $\pi_2 \circ \phi$ is a map from
$\R^n$ to $\R^n$, and $\TT_t(\pi_2 \circ \phi)$ denotes its $n \times n$
Jacobian matrix at a point $t$ in the domain of $\phi$.

\begin{example} [ED and average ED degree of an ellipse]
\label{ex:ellipse2}
For an illustrative simple example, let $X $
denote the  ellipse in $\R^2$ with equation $x^2 + 4 y^2=4$.
We first compute $\ED(X)$. Let $(u,v) \in \R^2$ be a data
point. The tangent line  to the ellipse $X$ at $(x,y) $ has direction $(-4y,x)$.
Hence the condition that $(x,y) \in X$ is critical for $d_{(u,v)}$
translates into the equation $(u-x,v-y) \cdot (-4y,x)=0$, i.e., into
$3xy+vx-4uy=0$. For \generic{} $(u,v)$, the curve defined by the latter
equation and the ellipse intersect in $4$ points in $\C^2$, so $\ED(X)=4$.

\begin{figure}[h]
\centering
\includegraphics[scale=0.7]{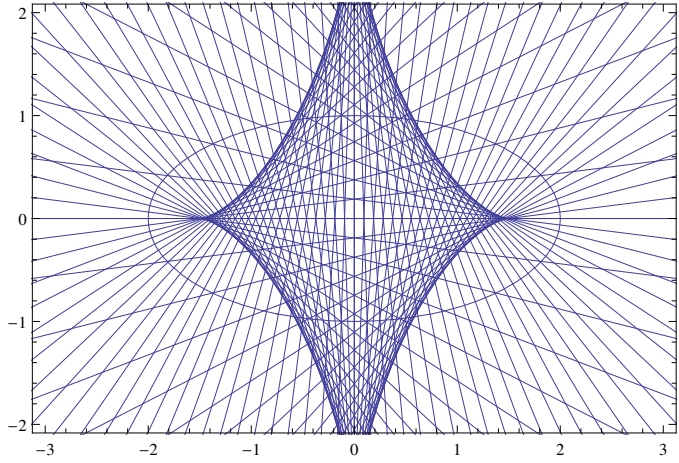}
\caption{Computing the average ED of an ellipse: the evolute
divides the plane into an inside region, where fibers or
$\pi_2$ have cardinality $4$, and an outside region, where
fibers of $\pi_2$ have  cardinality $2$. The average ED of the
ellipse is a weighted average of these numbers.}
\label{fig:edellips}
\end{figure}

Now we consider $\aED(X,\omega)$ where
$\omega=\frac{1}{2\pi}e^{-(u^2+v^2)/2} \dd u \wedge \dd v$
is the standard Gaussians centered at the midpoint $(0,0)$ of the ellipse.
Given $(x,y) \in X$, the $(u,v)$ for which $(x,y)$ is critical are precisely those
on the normal line. This is the line through $(x,y)$ with direction $(x,4y)$.
In Figure~\ref{fig:edellips} we  plotted some of these normal lines.
A colorful dynamic version of the same picture can be seen at
{\tt http://en.wikipedia.org/wiki/Evolute}.
The {\em evolute} of the ellipse $X$ is what we
named the ED discriminant. It is the sextic {\em Lam\'e curve}
$$ \Sigma_X \,= \,V( 64 u^6+48 u^4 v^2+12 u^2 v^4+v^6
-432 u^4+756 u^2 v^2-27 v^4+     972 u^2+243 v^2-729). $$
Consider the rational parametrization of $X$ given by
$\psi(t) = \left(\frac{8t}{1+4t^2}, \frac{4t^2-1}{1+4t^2} \right)$, $t \in \mathbb R$.
From $\psi$ we construct a parametrization $\phi$ of the surface
$\corrR{X}$ as in Corollary~\ref{cor:unirational}, so that
\[ \pi_2 \circ \phi: \R \times \R \to \R^2,(t,s) \mapsto
\left((s+1)\frac{8t}{1+4t^2} ,(4s+1)\frac{4t^2-1}{1+4t^2}
\right).
\]
The Jacobian determinant of $\pi_2 \circ \phi$ equals $\frac{-32(1+s+4(2s-1)t^2+16(1+s)t^4)}{(1+4t^2)^3}$, so $\aED(X)$~is
\[ \frac{1}{2\pi} \int_{-\infty}^\infty \left( \int_{-\infty}^\infty  \left|\frac{-32(1+s+4(2s{-}1)t^2+16(1{+}s)t^4)}{(1+4t^2)^3}\right| e^{\frac{-(1+4s)^2-8(7-8(-1+s)s)t^2-16(1+4s)^2t^4}{2(1+4t^2)^2}} \dd t \right) \dd s.\]
Numerical integration (using {\tt Mathematica 9}) finds the value
 $\,3.04658...$ in $0.2$ seconds.

The following experiment independently validates this
average ED degree calculation. We sample
 data points $(u,v)$ randomly from Gaussian
distribution. For each $(u,v)$ we compute the number
of real critical points, which is either $2$ or $4$,
and  we average these numbers.
The average value approaches $3.05...$, but it requires
$10^5$ samples to get two digits of accuracy.
The total running time is
 $38.7$ seconds, so much longer than the numerical integration.
\hfill $\diamondsuit$
\end{example}

\begin{example}
The cardioid $X$ from Example~\ref{ex:cardioid} can be parametrized by
\[
\psi:\R \to \mathbb{R}^2,\
t \mapsto\left(\frac{2 t^2-2}{(1 + t^2)^2},\ \frac{-4 t}{(1 + t^2)^2}\right).\]
From this we derive
the following parametrization of the ED-correspondence $\corrR{X}$:
$$
\phi:\R \times \mathbb{R}\to \mathbb{R}^2 \times \R^2,\,\,(t,s)
\mapsto \biggl(\psi(t),
\frac{2 (t^4 -1 + 4 s ( 3 t^2-1)}{(1 + t^2)^3},\ \frac{4 t (-1 - 6 s + ( 2 s-1) t^2)}{(1 + t^2)^3}\biggr).
$$
Fixing the standard Gaussian centered at $(0,0)$,
the integral (\ref{eq:aED3}) now evaluates as follows:
\[
\aED(X,\omega)\,\,=\,\,
\frac{1}{2\pi} \int_{\R^2} |\det \TT_{t,s} (\pi_2 \circ
\phi)| e^{-\frac{||\pi_2\circ\phi (t,s)||^2}{2}}\dd
t\dd s\ \,\, \approx \,\,2.8375.
\]
Thus, our measure gives little mass to the region inside the smaller cardioid
in Figure~\ref{fig:logo}. \hfill $\diamondsuit$
\end{example}

\begin{example} \label{ex:EYposi1}
Some varieties $X$ have the property that, for all data $u$, all the complex critical points 
have real coordinates.
If this holds then $\aED(X,\omega) = \ED(X)$, for any measure $|\omega|$ on data space.
One instance is the variety $X_r$ of  real $s\times t$ matrices of rank $\le r$,
by  Example~\ref{ex:eckartyoung}. We shall discuss the ED discriminant of
$X_r$ in Example \ref{ex:EYposi2}.
  \hfill $\diamondsuit$
\end{example}

We next present a family of examples where the integral (\ref{eq:aED3}) can be computed exactly.

\begin{example} \label{ex:rationalnormalaed}
We take $X$  as the cone over the {\em rational normal curve}, in a special
coordinate system, as in Example~\ref{ex:twistedcubic} and
Corollary \ref{cor:veronese}.
Fix $\R^2$ with the standard orthonormal basis $e_1,e_2$.
Let $S^n \R^2$ be the space of homogeneous
polynomials of degree $n$ in the variables $e_1,e_2$.
We identify this space with $\R^{n+1}$ by fixing the
basis $f_i:=\sqrt{\binom{n}{i}}\cdot e_1^ie_2^{n-i}$ for $ i=0,\ldots,n$. This ensures
that the natural action of the orthogonal group $\OO_2(\R)$ on polynomials in $e_1,e_2$ is
by transformations that are orthogonal with respect to the standard
inner product on $\R^{n+1}$.

Define $v,w:\mathbb{R}^2\to\mathbb{R}^2$ by $v(t_1,t_2):=t_1e_1+t_2e_2$
and $w(t_1,t_2):=t_2e_1-t_1e_2$. These two vectors
form an orthogonal basis of $\R^2$ for $(t_1,t_2) \neq (0,0)$.
Our surface $X$ is parametrized~by
\[ \psi: \R^2 \,\to\, S^n \R^2 =\R^{n+1},\quad (t_1,t_2) \,\mapsto\,
v(t_1,t_2)^n = \sum_{i=0}^n t_1^i t_2^{n-i} \sqrt{\binom{n}{i}} f_i. \]
For $n=3$, this parametrization specializes to the second parametrization
in Example \ref{ex:twistedcubic}.
Fix the standard Gaussian centered at the origin in $\R^{n+1}$.
In what follows, we shall prove
\begin{equation}
\aED(X) \,\, = \,\, \sqrt{3n-2}.
\end{equation}
We begin by parametrizing the ED correspondence,
as suggested in the proof of
Corollary~\ref{cor:unirational}.
For $(t_1,t_2) \neq (0,0)$,
the tangent space $T_{\psi(t_1,t_2)} X$ is
 spanned by $v(t_1,t_2)^n$ and $v(t_1,t_2)^{n-1}\cdot w(t_1,t_2)$. Since,
by the choice of scaling, the vectors $v^n,v^{n-1}w,\ldots,w^n$ form an orthogonal
basis of $\R^{n+1}$, we find that the orthogonal complement
 $(T_{\psi(t_1,t_2)} X)^\perp$ has the orthogonal basis
 \[w(t_1,t_2)^n,\ v(t_1,t_2)\cdot w(t_1,t_2)^{n-1},\ldots,\
 v(t_1,t_2)^{n-2}\cdot w(t_1,t_2)^2.\]
 The resulting parametrization
$ \,\phi: \R^2 \times \R^{n-1} \to \cE_X\,$ of the ED correspondence equals
$$
(t_1,t_2,s_0,...,s_{n-2}) \,\mapsto \,
\bigl(\,\psi(t_1,t_2),\ v(t_1,t_2)^n + s_0 w(t_1,t_2)^n + \cdots + s_{n-2} v(t_1,t_2)^{n-2}\cdot w(t_1,t_2)^2 \,\bigr).
$$

Next we determine the Jacobian $J=J(\pi_2 \circ \phi)$ at the
point $\psi(t_1,t_2)$. It is most convenient to do so relative to the
orthogonal basis $v(t_1,t_2),\ w(t_1,t_2),\ (1,0,\ldots,0),\ \ldots,
(0,\ldots,0,1)$ of $\R^2 \times \R^{n-1}$ and the orthogonal basis
$w(t_1,t_2)^n,\ldots,v(t_1,t_2)^n$ of $\R^{n+1}$.  Relative to these
bases,
\[ J \quad = \quad
\begin{bmatrix}
 * & * & 1 & 0 & \cdots & 0\\
 * & * & 0 & 1 & \cdots & 0\\
 \vdots & \vdots  & \vdots & \vdots & \ddots & \vdots\\
 * & * & 0 & 0 & \cdots & 1\\
 0 & n-2s_{n-2} & 0 & 0 & \cdots & 0\\
 n & * & 0 & 0 & \cdots & 0
\end{bmatrix},
\]
where the stars are irrelevant for ${\rm det}(J)$. For instance, an
infinitesimal change $v(t_1,t_2) \mapsto v(t_1,t_2)+\epsilon w(t_1,t_2)$
leads to a change $w(t_1,t_2) \mapsto w(t_1,t_2)-\epsilon
v(t_1,t_2)$ and to a  change of $\pi_2 \circ \phi$ in which
the coefficient of $\epsilon v(t_1,t_2)^{n-1}\cdot w(t_1,t_2)$ equals
$n-2s_{n-2}$. When computing the determinant of $J$, we must consider
that the chosen bases are orthogonal but not orthonormal:
the norm of $v(t_1,t_2)^i\cdot w(t_1,t_2)^{n-i}$, corresponding to the
$i$-th row, equals $\sqrt{(t_1^2 + t_2^2)^n}\binom{n}{i}^{-1/2}$; and
the norm of $v(t_1,t_2)$ and $w(t_1,t_2)$, corresponding to the first and
second column, equals $\sqrt{t_1^2+t_2^2}$. Multiplying the determinant
of the matrix above with the product of these scalars,
and dividing by the square of $\sqrt{t_1^2+t_2^2}$ for the first two
columns, we obtain the formula
\[ |\det J(\pi_2 \circ \phi)|\,\,=\,\,n \cdot |n-2s_{n-2}| \cdot (t_1^2+t_2^2)^{n(n+1)/2 - 1} \cdot \prod_{i=0}^n \binom{n}{i}^{-1/2}  . \]
Next, the squared norm of $u=\pi_2 \circ \psi(t_1,t_2,s_0,...,s_{n-2})$ equals
\[ ||u||^2=(t_1^2+t_2^2)^n \cdot \left(1 + \sum_{i=0}^{n-2} s_i^2 \binom{n}{i}^{-1}
 \right). \]
The average ED degree of $X$ relative to the standard Gaussian equals
\[
\aED(X)=\frac{1}{(2\pi)^{(n+1)/2}} \int
|\det J(\pi_2 \circ \psi)| e^{-||u||^2/2} \dd v_1 \dd v_2 \dd s_0 \cdots \dd s_{n-2}.
\]
parametrizing the regions where $\det J(\pi_2 \circ \psi)$ is positive or
negative by $s_{n-2} \in (-\infty,n/2)$ or $s_{n-2} \in (n/2,\infty)$,
this integral can be computed in closed form. Its value
equals $\sqrt{3n-2}$. Interestingly, this value
is the square root of the \generic{} ED degree
in Example~\ref{ex:rationalnormalcurve}.
 For more information see Section \ref{sec:Tensors}
and the article \cite{DH}
where  tensors of rank $1$ are treated.
\hfill $\diamondsuit$
\end{example}

\smallskip

We close this section with the remark that
 different applications require different choices of the
measure $|\omega|$ on data space. For instance, one might want to draw
$u$ from a product of intervals equipped with the uniform distribution,
or to concentrate the measure near $X$.

\section{Duality}
\label{sec:Duality}

This section deals exclusively with irreducible affine cones $X \subset
\C^n$, or, equivalently, with their corresponding projective varieties $X
\subset \PP^{n-1}$. Such a variety has a {\em dual variety} $Y:=X^*
\subset \C^n$, which is defined as follows,
where the line indicates the topological closure:
\[ Y \,\,:=\,\,
\overline{
\bigl\{y \in {\C}^n \mid \exists x \in X \backslash \Xsing: y \perp T_x X \bigr\} }. \]
See  \cite[Section 5.2.4]{RS} for an introduction to this
duality in the context of optimization.
Algorithm 5.1 in \cite{RS} explains how to compute
the ideal of $Y$ from that of $X$.

The variety $Y$ is an irreducible affine cone, so we can regard it as an irreducible
projective variety in $\PP^{n-1}$.  That projective variety parametrizes
hyperplanes tangent to $X$ at non-singular points, if one uses the
standard bilinear form on $\C^n$ to identify hyperplanes with points in
$\PP^{n-1}$.  We will prove $\ED(X)=\ED(Y)$. Moreover, for \generic{}
data $u \in \C^n$, there is a natural bijection between the critical
points of $d_u$ on the cone $X$ and the critical points of $d_u$ on the
cone $Y$. We then link this result to the literature  on
the {\em conormal variety} (cf.~\cite{Holme})
 which gives powerful techniques for
computing ED degrees of smooth varieties that intersect the
isotropic quadric $Q = V(x_1^2+\cdots + x_n^2)$
transversally.  Before dealing with the general case, we revisit the
example of the Eckart-Young Theorem.

\begin{example}\label{ex:SVDrevisited}
For the variety $X_r$ of $s\times t$ matrices ($s\le t$) of rank
$\le r$, we have $X_r^*=X_{s-r}$ \cite[Chap. 1, Prop. 4.11]{GKZ}. From
Example \ref{ex:eckartyoung} we see that ${\rm EDdegree}(X_r) = {\rm EDdegree}(X_{s-r})$.
There is a bijection between the critical points of $d_U$ on $X_r$
and on $X_{s-r}$. To see this, consider the
singular value decomposition (\ref{eq:SVD}).
For a subset $I=\{i_1,\ldots, i_r\}$ of $\{1,\ldots , s\}$, we set
\[ U_I \,\,=\,\,\, T_1 \cdot {\rm diag}(\ldots,\sigma_{i_1},\ldots,
\sigma_{i_2}, \ldots, \sigma_{i_r},\ldots) \cdot T_2,
\]
where the places of $\sigma_j$ for $j \not\in I$ have been filled with zeros
in the diagonal matrix.
 Writing $I^c$ for the complementary subset of size $s-r$, we have
$\,U=U_I+U_{I^c}$. This decomposition is orthogonal in the
sense that $\langle U_I,U_{I^c}\rangle =\textrm{tr}(U_I^tU_{I^c})=0$.
It follows that, if $U$ is real, then  $|U|^2=|U_I|^2+|U_{I^c}|^2$,
where $|U|^2=tr(U^tU)$.  As $I$ ranges over all $r$-subsets,
$U_I$ runs through the critical points of $d_U$ on the variety
$X_r$, and $U_{I^c}$ runs through the critical points of $d_U$
on the dual variety $X_{s-r}$. Since the formula above reads as
$|U|^2=|U-U_{I^c}|^2+|U-U_{I}|^2$, we conclude that
the proximity of the real critical points reverses
under this bijection. For instance,  if $U_I$ is
the real point on $X_r$ closest to $U$, then $U_{I^c}$ is
the real point on $X_{s-r}$ farthest from $U$.
For a similar result in the multiplicative context of maximum likelihood see \cite{DR}.
\hfill $ \diamondsuit$
\end{example}

The following theorem shows that the duality seen in Example
\ref{ex:SVDrevisited} holds in general.

\begin{theorem} \label{thm:dualED}
Let $X \subset \C^n$ be an irreducible affine cone, $Y \subset \C^n$
its dual variety, and  $u \in \C^n$ a \generic{} data point.
The map $x \mapsto u-x$ gives a bijection from the critical
points of $d_u$ on $X$ to the critical points of $d_u$ on
$Y$. Consequently, $\ED(X)=\ED(Y)$. Moreover, if $u$ is real, then
the map sends real critical points to real critical points, and hence
$\aED(X,\omega)=\aED(Y,\omega)$ for any volume form $\omega$.
The map is proximity-reversing: the closer a real critical
point $x$ is to the data point $u$, the further $u-x$ is from~$u$.
\end{theorem}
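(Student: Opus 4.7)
The plan is to leverage the biduality theorem for the conormal variety and the cone structure of $X$. Recall that the conormal variety of an affine cone $X$ is
\[
N_X \,:=\, \overline{\{(x,y) \in \C^n \times \C^n : x \in X \setminus X_{\sing},\ y \perp T_x X\}},
\]
and classical projective duality asserts that $N_X$ is symmetric under interchanging factors: if $Y = X^*$, then $(x,y) \in N_X$ if and only if $(y,x) \in N_Y$, and in particular $Y^* = X$. My first step is to set $y := u - x$ and verify that the map $\phi : x \mapsto u - x$ sends critical points of $d_u$ on $X$ into critical points of $d_u$ on $Y$. By definition of the critical equations \eqref{eq:sys1}, $x$ is critical precisely when $x \in X \setminus X_{\sing}$ and $u - x \perp T_x X$, i.e.\ $(x, u-x) \in N_X$. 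Thus $y = u-x$ lies in $Y$, and biduality gives $x \perp T_y Y$ at generic such pairs, which is exactly the condition that $u - y = x$ be orthogonal to $T_y Y$, i.e.\ that $y$ be critical for $d_u$ on $Y$.

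Next I would verify that $\phi$ is genuinely a bijection between critical loci. Applying the same construction with the roles of $X$ and $Y$ swapped (using $Y^* = X$) shows that $\psi : y \mapsto u - y$ sends $Y$-critical points to $X$-critical points; since $\psi \circ \phi = \mathrm{id}$, we obtain a bijection. Here I need genericity of $u$ to ensure the critical points $x$ lie over smooth points of $N_X$ (so biduality applies pointwise rather than only generically along the correspondence) and that $y = u-x$ avoids $Y_{\sing}$. This is the main technical obstacle: one must argue that the finitely many critical points produced by Lemma \ref{lem:EDd} map under $\phi$ into the smooth locus of $Y$, which follows by a dimension count because the locus of $u$ where $u - x$ fails to be a smooth point of $Y$ for some critical $x$ is contained in a proper closed subvariety of $\C^n$. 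The equality $\ED(X) = \ED(Y)$ is then immediate by counting, and restricting $u \in \R^n$ gives the real bijection, so that the integral formula \eqref{eq:aED2} yields $\aED(X,\omega) = \aED(Y,\omega)$ for every volume form.

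Finally, I would prove the proximity-reversing statement using the cone structure. Since $X$ is an affine cone, Euler's identity gives $x \in T_x X$ for every smooth $x \in X$, so the critical condition $u - x \perp T_x X$ implies in particular $\langle x,\, u - x\rangle = 0$, that is $\langle x, y\rangle = 0$. For real data $u$ and real critical points, the Pythagorean identity yields
\[
\|u\|^2 \,=\, \|x + y\|^2 \,=\, \|x\|^2 + \|y\|^2,
\]
equivalently $d_u(x) + d_u(y) = \|u\|^2$, which is a constant independent of which critical pair $(x,y)$ we chose. Consequently $d_u(x)$ is a strictly decreasing affine function of $d_u(y)$, so the closer $x$ lies to $u$, the farther $y = u - x$ lies from $u$, completing the theorem.
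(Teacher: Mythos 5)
Your proposal is correct and follows essentially the same route as the paper's proof: the map $x \mapsto u-x$, the Biduality Theorem $Y^*=X$ applied to the conormal pair $(x,u-x)$, genericity of $u$ to land in the smooth locus of $Y$, and the identity $\|u-x\|^2+\|x\|^2=\|u\|^2$ coming from $x\in T_xX$ for the proximity reversal. Your dimension-count justification that $u-x$ avoids $Y_{\sing}$ just makes explicit what the paper asserts from genericity.
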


\begin{figure}[h]
\begin{center}
\includegraphics[scale=.6]{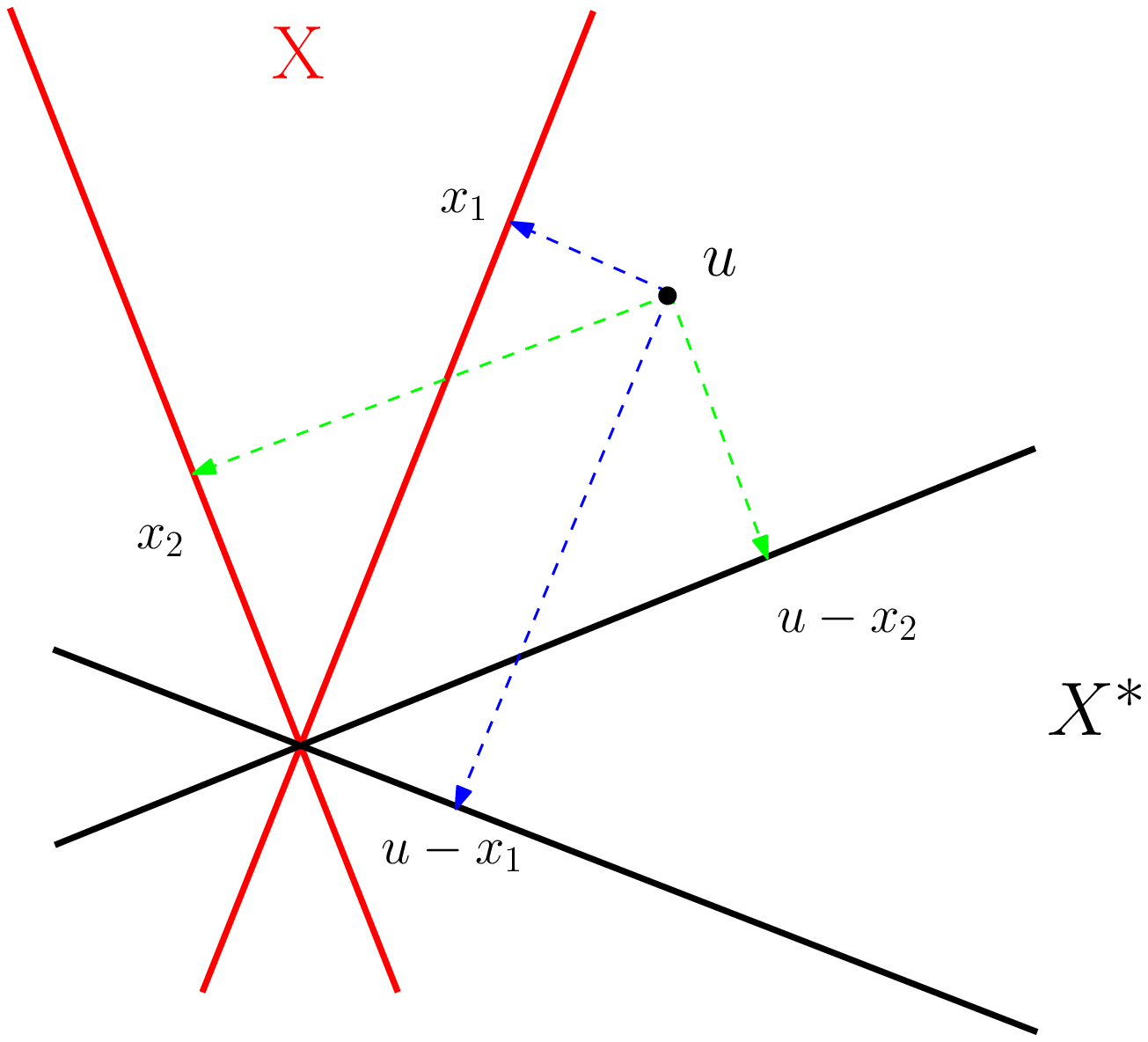} \,\,\,
\includegraphics[scale=.53]{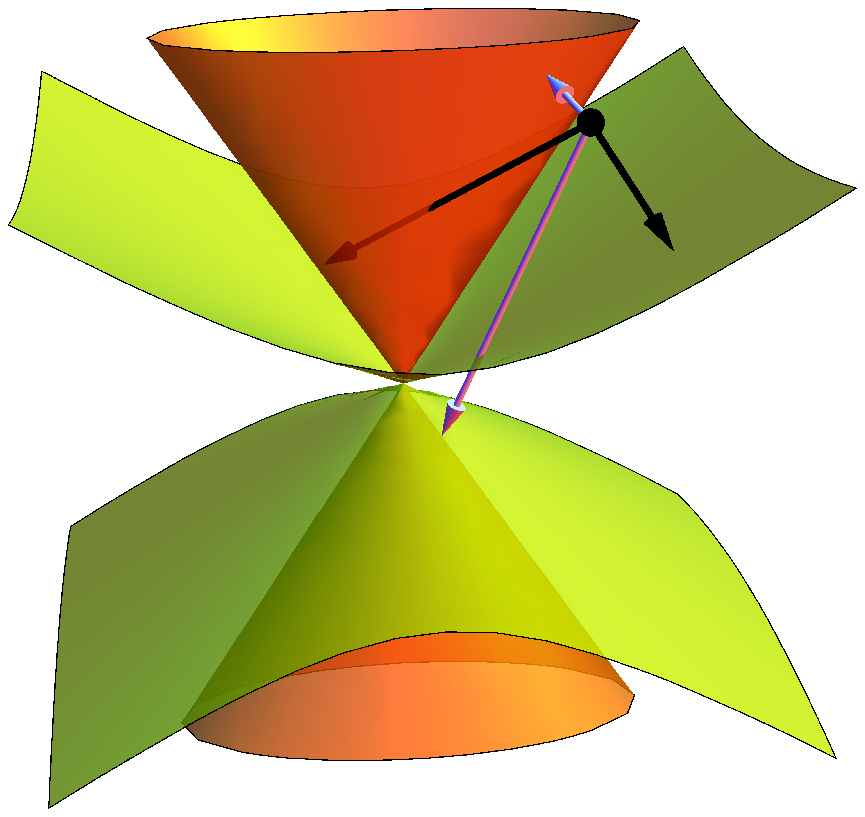}
\caption{The bijection between critical points on $X$ and
critical points on $X^*$.}
\label{fig:dualED}
\end{center}
\end{figure}

The statement of Theorem~\ref{thm:dualED}
is illustrated in Figure~\ref{fig:dualED}. On the left,
the variety $X$ is a $1$-dimensional affine cone in $\R^2$.
This $X$ is not irreducible but it visualizes
our duality in the simplest possible case.
The right picture shows the same
scenario in one dimension higher. Here
$X$ and $X^*$ are quadratic cones in $\R^3$,
corresponding to a dual pair of conics in $\PP^2$.

The proof of  Theorem~\ref{thm:dualED} uses properties of the {\em conormal variety},
which is defined as
\[ \cN_X \,\,:= \,\,\overline{ \bigl\{(x,y) \in \C^n \times \C^n \mid x \in X
\backslash \Xsing \,\,{\rm and} \,\, y \perp T_x X \bigr\}}. \]
The conormal variety is the zero set of the following ideal in $\R[x,y]$:
\begin{equation}
\label{eq:conormalideal}
N_X \,\, := \,\,
 \biggl( I_X \,+ \,
\biggl\langle \hbox{$(c+1) \times (c+1)$-minors of }
\begin{pmatrix}
y \\
J(f)
\end{pmatrix}
\biggr\rangle \biggr) : \bigl(\, I_{X_{\rm
sing}}\,\bigr)^\infty,
\end{equation}
where $f=(f_1,\ldots,f_s)$ is a system of homogeneous generators of $I_X$.
It is known that $\mathcal{N}_X$ is irreducible of dimension $n-1$. The
projection of $\mathcal{N}_X$ into the second factor $\C^n$ is the
dual variety $Y  = X^*$. Its ideal $I_Y$ is computed by elimination,
namely, by intersecting (\ref{eq:conormalideal}) with $\R[y]$.
An important property of the
conormal variety is the {\em Biduality Theorem}
\cite[Chapter 1]{GKZ}, which states that $\cN_X$
equals $\cN_Y$ up to swapping the two factors. In symbols, we have
\[ \cN_X \,=\,
\cN_Y \,= \,\overline{ \bigl\{(x,y) \in \C^n \times \C^n \mid \,y \in Y \backslash
\Ysing \,\, {\rm and} \,\, x \perp T_y Y \bigr\}}. \]
This implies  $(X^*)^* = Y^*=X$.
Thus the biduality relation in \cite[Theorem 5.13]{RS} holds.
 To keep the
symmetry in our notation, we will henceforth write
$\cN_{X,Y}$ for $\cN_X$ and $N_{X,Y}$ for $N_X$.

\begin{proof}[Proof of Theorem~\ref{thm:dualED}.]
The following is illustrated in Figure~\ref{fig:dualED}.
If $x$ is a critical point of $d_u$ on $X$, then $y:=u-x$ is orthogonal to $T_x
X$, and hence $(x,y) \in \cN_{X,Y}$. Since $u$ is general,
all $y$ thus obtained from critical points $x$ of $d_u$ are non-singular
points on $Y$. By the Biduality Theorem, we have $u-y=x \perp T_y Y$,
i.e., $y$ is a critical point of $d_u$ on $Y$. This shows that $x \mapsto
u-x$ maps critical points of $d_u$ on $X$ into critical points of $d_u$
on $Y$.  Applying the same argument to $Y$, and using that $Y^*=X$,
we find that, conversely, $y \mapsto u-y$ maps critical points of $d_u$
on $Y$ to critical points of $d_u$ on $X$. This establishes the bijection.

The consequences for $\ED(X)$ and $\aED(X,\omega)$ are
straightforward. For the last statement we observe that
$u-x \perp x \in T_x X$ for critical $x$. For $y = u-x$, this implies
\[ ||u-x||^2 + ||u-y||^2 \,\,= \,\, ||u-x||^2 + ||x||^2 \,\,= \,\,  ||u||^2 . \]
Hence the assignments that take real data points $u$ to $X$ and  $X^*$
are proximity-reversing.
\end{proof}

Duality leads us to define the {\em joint ED
correspondence}  of the cone $X$ and its dual $Y$ as
\begin{align*}
\cE_{X,Y}\,\,:&=\,\,\overline{\, \bigl\{(x,u-x,u) \in \C^n_x \times \C^n_y \times \C^n_u
\mid x \in X \backslash \Xsing \,\, {\rm and} \,\, u-x \perp T_x X \bigr\}}\\
&= \,\,\,\overline{\bigl\{(u-y,y,u) \in \C^n_x \times \C^n_y \times \C^n_u
\mid \, y \in Y \backslash \Ysing \,\, {\rm and} \,\, u-y \perp T_y Y \bigr\}}.
\end{align*}
The projection of $\cE_{X,Y}$ into $\C^n_x \times \C^n_u$ is the ED
correspondence $\cE_X$ of $X$, its projection into $\C^n_y \times
\C^n_u$ is $\cE_Y$, and its projection into $\C^n_x \times \C^n_y$ is
the conormal variety $\cN_{X,Y}$. The affine variety
$\cE_{X,Y}$ is irreducible of dimension $n$, since $\cE_X$
has these properties (by Theorem~\ref{thm:EDaffine}), and
the projection $\cE_{X,Y} \to \cE_X$ is birational with inverse
$(x,u) \mapsto (x,u-x,u)$.

Following Theorem \ref{thm:EDproj}, we also introduce the
 {\em projective joint ED correspondence}  $\mathcal{P} \cE_{X,Y}$.
 By definition, $\mathcal{P} \cE_{X,Y}$ is the closure of the image of $\,\cE_{X,Y} \cap \bigl( (\C^n
\backslash \{0\})^2 \times \C^n \bigr)$ in $\PP^{n-1}_x \times \PP^{n-1}_y \times \C^n_u$.

\begin{proposition} \label{prop:projjointED}
Let $X \subset \C^n$ be an irreducible affine cone,
let $Y \subset \C^n$ be the dual variety of $X$, and assume that neither $X$ nor $Y$ is
contained in $Q = V(q)$, where $q = x_1^2+\cdots+x_n^2$. Then $\mathcal{P} \cE_{X,Y}$ is
an irreducible $n$-dimensional variety in $\PP^{n-1}_x \times \PP^{n-1}_y \times \C^n_u$.
It is the zero set of the tri-homogeneous ideal
\begin{equation}
\label{eq:jointED} \biggl( N_{X,Y} \,+ \, \biggl\langle
\hbox{$3 \times 3$-minors of the $3 \times n$-matrix}
\begin{pmatrix} \,\,u\,\, \\ x \\ y
\end{pmatrix}
\biggr\rangle \biggr) :\, \bigl\langle q(x) \cdot q(y)
\bigr\rangle^\infty \,\subset \, \R[x,y,u].
\end{equation}
\end{proposition}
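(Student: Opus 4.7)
The overall plan is to verify both parts of the proposition—irreducibility with correct dimension, and the scheme-theoretic description—by exploiting the birationality of the projection from $\cE_{X,Y}$ to $\mathcal{P}\cE_{X,Y}$, and by unpacking the geometric content of the ideal \eqref{eq:jointED}: the piece $N_{X,Y}$ records that $(x,y)$ is a conormal pair, while the $3 \times 3$ minor condition records the linear dependence $u \in \operatorname{span}(x,y)$, i.e., $u = \lambda x + \mu y$ for some scalars $\lambda,\mu$.

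For irreducibility and dimension, I would begin from the fact (stated immediately before the proposition) that $\cE_{X,Y}$ is irreducible of dimension $n$. By definition $\mathcal{P}\cE_{X,Y}$ is the closure of the image of $\cE_{X,Y} \cap ((\C^n \setminus \{0\})^2 \times \C^n)$ under $(x,y,u) \mapsto ([x],[y],u)$, so it is automatically irreducible. The dimension equals $n$ once I show this map is generically one-to-one: a lift of a generic $([x_0],[y_0],u_0)$ must satisfy $\lambda x_0 + \mu y_0 = u_0$, which has a unique solution $(\lambda,\mu) \in \C^2$ whenever $x_0,y_0$ are linearly independent. Under our hypotheses that $X,Y \not\subseteq Q$ this independence is generic, so the map is birational.

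For the ideal description I would verify both inclusions. The forward inclusion $\mathcal{P}\cE_{X,Y} \subseteq V(\ref{eq:jointED})$ is a direct unpacking: a generic element lifts to $(x_0,y_0,u_0) \in \cE_{X,Y}$, so $(x_0,y_0) \in \cN_{X,Y}$ and $u_0 = x_0 + y_0 \in \operatorname{span}(x_0,y_0)$, and the generic $x_0 \in X$, $y_0 \in Y$ avoid $Q$, so the saturation by $\langle q(x)q(y)\rangle$ does not remove $\mathcal{P}\cE_{X,Y}$. For the reverse inclusion, I would take a generic $([x_0],[y_0],u_0) \in V(\ref{eq:jointED})$; the saturation forces $q(x_0)q(y_0) \ne 0$, and from $(x_0,y_0) \in \cN_{X,Y}$ together with the cone property $x_0 \in T_{x_0} X$, I obtain $x_0 \cdot y_0 = 0$, which combined with $q(x_0) \ne 0$ rules out $[y_0] = [x_0]$. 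Hence $x_0,y_0$ are linearly independent, and the minor condition gives unique $\lambda,\mu$ with $u_0 = \lambda x_0 + \mu y_0$. Setting $\tilde x = \lambda x_0$ and $\tilde y = \mu y_0$, the cone invariance $T_{\tilde x} X = T_{x_0} X$ yields $\tilde y \perp T_{\tilde x} X$, so $(\tilde x, \tilde y, u_0) \in \cE_{X,Y}$ projects to $([x_0],[y_0],u_0)$. Zariski closure concludes both inclusions.

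The main obstacle I anticipate is confirming that the saturation strips away precisely the extraneous components and nothing essential. The crucial identity is $x \cdot y = 0$ on $\cN_{X,Y}$: if $x,y$ were projectively equal, then $x \cdot x = 0$, so any "diagonal" component of $V(N_{X,Y} + \text{minors})$ lies inside $V(q(x)q(y))$ and is removed by the saturation. A secondary subtlety is handling the loci where $\lambda = 0$, $\mu = 0$, or $\tilde x \in \Xsing$; each of these sits in codimension at least one inside $V(\ref{eq:jointED})$, so the dense open stratum where $\tilde x$ is smooth and both scalars are nonzero carries the lifting argument, and closure completes the proof.
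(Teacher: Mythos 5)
Your overall route coincides with the paper's: irreducibility and the dimension count are inherited from $\cE_{X,Y}$ via the generically one-to-one projection $(x,y,u)\mapsto([x],[y],u)$; the forward inclusion is a direct unpacking plus the observation that generic points avoid $Q$, so the saturation does not remove $\mathcal{P}\cE_{X,Y}$; and the reverse inclusion hinges on the identity $x\cdot y=0$ on $\cN_{X,Y}$ (coming from the cone property), which together with $q(x)q(y)\neq 0$ forces $x,y$ to be linearly independent, so the $3\times 3$ minors give a unique expression $u=\lambda x+\mu y$ that one rescales and lifts to $\cE_{X,Y}$. All of these ingredients match the published proof.

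The genuine gap is in how you dispose of the degenerate loci in the reverse inclusion. You discard the cases $\lambda=0$, $\mu=0$ (and implicitly $u=0$), and the locus where the rescaled point is singular, by asserting they ``sit in codimension at least one inside'' the zero set of \eqref{eq:jointED}. At that stage of the argument this is not available: it presupposes knowledge of the irreducible components of the zero set of \eqref{eq:jointED}. A priori some component could be contained in $\{u\in\langle x\rangle\}$, or could lie entirely over the non-generic stratum of $\cN_{X,Y}$, and a ``generic point plus closure'' argument would never reach it; the irreducibility you actually prove is that of $\mathcal{P}\cE_{X,Y}$, which transfers to the zero set of \eqref{eq:jointED} only once the equality of the two sets is known, so the codimension claim is essentially circular. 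The paper avoids this issue: it treats every point with $q(x)q(y)\neq 0$ lying over the good stratum of $\cN_{X,Y}$, including those with $c=0$ or $d=0$ in $u=cx+dy$, by an explicit perturbation --- for instance, if $d=0$ then $(cx,\epsilon y,u+\epsilon y)\in\cE_{X,Y}$ for all $\epsilon\neq 0$, and letting $\epsilon\to 0$ shows $([x],[y],u)\in\mathcal{P}\cE_{X,Y}$, with similar limits for the other cases. To repair your proof, either incorporate this limit trick, or first show that the zero set of \eqref{eq:jointED} is irreducible of dimension $n$ because it is swept out by the planes $\langle x,y\rangle$ as $(x,y)$ ranges over $\cN_{X,Y}$ off the diagonal (the saturation having removed everything over the diagonal, where $q(x)=0$); this sweeping argument is exactly the one the paper uses later in the proof of Theorem~\ref{thm:sumpolar}, and with it your codimension count becomes legitimate.
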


\begin{proof}
The irreducibility of $\mathcal{P} \cE_{X,Y}$ follows from that of $\cE_{X,Y}$
which has the same dimension.

To see that $\mathcal{P} \cE_{X,Y}$ is defined by the ideal \eqref{eq:jointED},
note first that any point $(x,y,u)$ with $x \in X \backslash \Xsing$
and $y \perp T_xX$ and $x+y=u$ has $(x,y) \in \cN_{X,Y}$ and $\dim \langle
x,y,u \rangle \leq 2$, so that $([x],[y],u)$ is a zero of
\eqref{eq:jointED}. This shows that $\mathcal{P}\cE_{X,Y}$ is contained in the
variety of \eqref{eq:jointED}.

Conversely, let
$([x],[y],u)$ be in the variety of \eqref{eq:jointED}.
The points with $q(x)q(y) \neq 0$ are dense
in the variety of \eqref{eq:jointED}, so we may assume  $x,y \not
\in Q$. Moreover, since $(x,y) \in \cN_{X,Y}$, we may assume that $x,y$ are
non-singular points of $X$ and $Y$,  and that $x \perp T_y Y$
and $y \perp T_x X$.  This implies $x \perp y$. Since $x,y$
are not isotropic, they are linearly independent. Then $u=c x + d y$
for unique constants $c,d \in \C$. If $c,d \neq 0$, then we find that
$(cx,dy,u) \in \cE_{X,Y} \cap ((\C^n \backslash \{0\})^2 \times \C^n_u)$
and hence $([x],[y],u) \in \mathcal{P} \cE_{X,Y}$.  If $c \neq 0$ but $d=0$, then
$(cx, \epsilon y, u+\epsilon y) \in \cE_{X,Y}$ for all $\epsilon \neq 0$,
so that the limit of $([cx],[\epsilon y],u+\epsilon y)$ for $\epsilon \to
0$, which is $([x],[y],u)$, lies in $\mathcal{P} \cE_{X,Y}$. Similar arguments
apply when $d \neq 0$ but $c=0$ or when $c=d=0$.
\end{proof}

Our next result gives a formula for $\ED(X)$ in terms of the {\em polar classes}
of classical algebraic geometry \cite{Pie}.
These non-negative integers $\delta_i(X)$ are the coefficients of the class
\begin{equation}
\label{eq:bidegreeN}
[\cN_{X,Y}] \quad = \quad
 \delta_0(X) s^{n-1} t + \delta_1(X) s^{n-2} t^2  +
 \cdots + \delta_{n-2}(X) s t^{n-1}
 \end{equation}
of the conormal variety, when regarded as a subvariety of $\PP^{n-1} \times \PP^{n-1}$.
For topologists, the polynomial
(\ref{eq:bidegreeN}) is the class representing $\cN_{X,Y}$ in the cohomology ring
$H^*(\PP^{n-1} {\times} \PP^{n-1}) =  \Z[s,t]/\langle s^n,t^n \rangle$.
For commutative algebraists, it is the {\em multidegree} of the
$\Z^2$-graded ring $\R[x,y]/N_{X,Y}$. This is explained in \cite[Section 8.5]{MS}, and is
implemented in {\tt Macaulay2} with  the command {\tt multidegree}.
For geometers, the polar classes $\delta_i(X)$ have the following definition:
intersecting the $(n-2)$-dimensional subvariety $\cN_{X,Y}
\subset  \PP^{n-1} \times \PP^{n-1}$ with
an $n$-dimensional subvariety
$ L \times M $ where $L,M$ are general linear subspaces of $\PP^{n-1}$
of dimensions $n-j$ and $j$, respectively, one gets a finite number of
simple points.
The number $\delta_{j-1}(X)$ counts these points. The shift by
one is to ensure compatibility with Holme's paper \cite{Holme}.

So, for example, $\delta_0(X)$ counts the number of intersections of
$\cN_{X,Y}$ with $\PP^{n-1} \times M$ where $M$ is a general projective line.
These are the intersections of the dual variety $Y$
with $M$. Thus, if $Y$ is a hypersurface, then $\delta_0(X)$ is the
degree of $Y$, and otherwise $\delta_0(X)$ is zero.
In general, the first non-zero coefficient of (\ref{eq:bidegreeN})
is the degree of $Y$ and the last non-zero coefficient is the degree of $X$.
For all $i$, we have $\delta_i(Y)=\delta_{n-2-i}(X)$; see \cite[Theorem
2.3]{Holme}.

\begin{theorem} \label{thm:sumpolar}
If $\cN_{X,Y}$ does not intersect the diagonal
$\Delta(\PP^{n-1}) \subset \PP^{n-1} \times \PP^{n-1}$, then
$$ \ED(X) \, = \,\delta_0(X)+\cdots+\delta_{n-2}(X) \,= \,
\delta_{n-2}(Y) + \cdots + \delta_0(Y) \, = \, \ED(Y).$$
\end{theorem}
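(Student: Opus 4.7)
The plan is to reduce the statement to an intersection-theoretic computation in the cohomology ring $H^\ast(\PP^{n-1}\times \PP^{n-1}) = \Z[s,t]/\langle s^n,t^n\rangle$. The equality $\ED(X)=\ED(Y)$ is already established by Theorem~\ref{thm:dualED}, and the equality of the two sums then follows from the identity $\delta_i(Y)=\delta_{n-2-i}(X)$ quoted just before the theorem. Thus everything reduces to proving
\[
\ED(X) \,\,=\,\, \sum_{i=0}^{n-2}\delta_i(X).
\]

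For a \generic{} data point $u\in\C^n$, I plan to introduce the \emph{incidence locus}
\[
I_u \,:=\, \bigl\{([x],[y])\in \PP^{n-1}\times \PP^{n-1} \,:\, \dim\mathrm{span}(u,x,y)\le 2\bigr\},
\]
cut out precisely by the $3\times 3$-minors of the matrix $\begin{pmatrix} u\\ x\\ y\end{pmatrix}$ appearing in \eqref{eq:jointED}. The key step is to exhibit a bijection between the critical points of $d_u$ on $X$ and the set $\cN_{X,Y}\cap I_u$. The forward map sends a critical point $x$ to $([x],[u-x])$, which lies in $\cN_{X,Y}$ by definition of the conormal variety and in $I_u$ because $u=x+(u-x)$. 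For the reverse map, given $([x],[y])\in\cN_{X,Y}\cap I_u$, biduality forces $x\perp y$; the hypothesis $\cN_{X,Y}\cap\Delta(\PP^{n-1})=\emptyset$ then guarantees $[x]\ne[y]$, so $x$ and $y$ are linearly independent, and $u$ admits a unique decomposition $u=\lambda x+\mu y$. The vector $\lambda x$ is the desired critical point. Genericity of $u$ (placing $[u]$ outside $X\cup Y\cup Q$) makes both maps well-defined and mutually inverse.

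The second calculation is the class of $I_u$ in $\Z[s,t]/\langle s^n,t^n\rangle$. Since $I_u$ has codimension $n-2$, write $[I_u]=\sum_{a+b=n-2} c_{a,b}\, s^a t^b$ and extract $c_{a,b}$ by a test intersection with $L\times M$, where $L,M\subset \PP^{n-1}$ are \generic{} linear subspaces of dimensions $a$ and $b$. Since $\dim\langle L,[u]\rangle=a+1$ and $b+(a+1)=n-1$, a dimension count in $\PP^{n-1}$ shows that $M\cap \langle L,[u]\rangle$ is a single point $[y]$; writing $y=\alpha u+\ell$ with $\ell$ in the affine cone over $L$ yields a unique $[x]=[\ell]\in L$ completing the dependence. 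Hence $c_{a,b}=1$ for every $(a,b)$ with $a+b=n-2$, so
\[
[I_u] \,=\, s^{n-2} + s^{n-3}t + s^{n-4}t^2 + \cdots + s\,t^{n-3} + t^{n-2}.
\]

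Finally, since $\cN_{X,Y}$ and $I_u$ meet transversally in finitely many reduced points for \generic{} $u$, B\'ezout yields $\ED(X)=[\cN_{X,Y}]\cdot [I_u]$. Multiplying \eqref{eq:bidegreeN} by the displayed sum and reducing modulo $\langle s^n,t^n\rangle$, only pairs $(i,j)$ with $i+j=n-2$ survive, leaving $\bigl(\sum_{i=0}^{n-2}\delta_i(X)\bigr)\,s^{n-1}t^{n-1}$, as desired. The principal obstacle will be justifying the transversality of $\cN_{X,Y}\cap I_u$ and ruling out spurious contributions from the isotropic quadric $Q$ or from limits toward the diagonal: this is exactly where the assumption $\cN_{X,Y}\cap \Delta(\PP^{n-1})=\emptyset$ is essential, both to keep the bijection in Step~2 single-valued and to prevent the intersection number from overcounting the true ED degree.
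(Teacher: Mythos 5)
Your overall strategy coincides with the paper's: your incidence locus $I_u$ is exactly the fiber $Z_u$ of the paper's variety $Z\subset\PP^{n-1}_x\times\PP^{n-1}_y\times\C^n_u$ of linearly dependent triples, your class computation $[I_u]=s^{n-2}+s^{n-3}t+\cdots+t^{n-2}$ is the same one the paper quotes from \cite[Corollary 16.27]{MS}, and the final multiplication against $[\cN_{X,Y}]$ modulo $\langle s^n,t^n\rangle$ is identical. The reduction of the chain of equalities to $\ED(X)=\sum_i\delta_i(X)$ via Theorem~\ref{thm:dualED} and $\delta_i(Y)=\delta_{n-2-i}(X)$ is fine, and your set-theoretic bijection is essentially Proposition~\ref{prop:projjointED} read fiberwise.

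The genuine gap is the step you flag but never close: transversality, i.e.\ that for generic $u$ every point of $\cN_{X,Y}\cap I_u$ has intersection multiplicity one. Your bijection only identifies $\ED(X)$ with the \emph{number} of intersection points, and a proper (zero-dimensional) intersection only gives $\#(\cN_{X,Y}\cap I_u)\le[\cN_{X,Y}]\cdot[I_u]$, since each point contributes multiplicity $\ge 1$; so as written you obtain only $\ED(X)\le\sum_i\delta_i(X)$. The missing idea, which is the heart of the paper's proof, is to pass to the total space: show that $(\cN_{X,Y}\times\C^n)\cap Z$ is irreducible (it is swept out by the planes $\{(x,y)\}\times\langle x,y\rangle$, using $\cN_{X,Y}\cap\Delta(\PP^{n-1})=\emptyset$), identify it with the projective joint ED correspondence $\mathcal{P}\cE_{X,Y}$ via Proposition~\ref{prop:projjointED}, verify by a tangent-space computation that this intersection is transversal (hence generically smooth), and then apply generic smoothness \cite[Corollary III.10.7]{Har} to the projection $\pi_3$ to conclude that the fiber over a generic $u$, which is scheme-theoretically $\cN_{X,Y}\cap Z_u$, consists of simple points. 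A secondary, fixable omission: your reverse map tacitly assumes that each point of $\cN_{X,Y}\cap I_u$ lies over the smooth locus of $X$ with $y\perp T_xX$, and that $\lambda,\mu\ne 0$. This needs a dimension count (the locus of $\cN_{X,Y}$ over $\Xsing$ is a proper closed subset, of dimension $\le n-3$, so the corresponding incidence in $\C^n_u$ has dimension $\le n-1$ and misses generic $u$; likewise $\lambda=0$ or $\mu=0$ forces $[u]\in Y$ or $[u]\in X$), or it is absorbed, as in the paper, into the identification of the intersection with $\mathcal{P}\cE_{X,Y}$.
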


A sufficient condition for $\cN_{X,Y}$ not to intersect
$\Delta(\PP^{n-1})$ is that $X \cap Q$ is a transversal
intersection everywhere (i.e.~$X \cap Q$ is smooth)
and disjoint from $\Xsing$. Indeed, suppose that
$(x,x) \in \cN_{X,Y}$ for some $x \in X$. There exists a sequence
of points $(x_i,y_i) \in \cN_{X,Y}$ with $x_i \in X \backslash \Xsing$,
$y_i \perp T_{x_i} X$, such that $\lim_{i \to \infty}(x_i,y_i) \to
(x,x)$. Then $y_i \perp x_i$, so taking the limit we find $x \in Q$. If,
moreover, $X$ is smooth at $x$, then $T_{x_i} X$ converges to the tangent
space $T_x X$.  We conclude that $x \perp T_x X$, which means that $X$
is tangent to $Q$ at $x$.

\begin{proof}[Proof of Theorem~\ref{thm:sumpolar}.]
Denote by $Z$ the variety of linearly dependent triples $(x,y,u) \in \PP^{n-1}_x \times
\PP^{n-1}_y \times \C^n_u$.
By Proposition~\ref{prop:projjointED}, the intersection $(\cN_{X,Y} \times
\C^n) \cap Z$ contains the projective ED correspondence $\mathcal{P} \cE_{X,Y}$
as a component. The two are equal because  $(\cN_{X,Y} \times \C^n) \cap Z$ is
swept out by the $2$-dimensional vector spaces $\{(x,y)\} \times
\langle x,y \rangle$, as $(x,y)$ runs through the irreducible variety
$\cN_{X,Y}$, and hence it is irreducible. Here we are using that
$\cN_{X,Y} \cap \Delta(\PP^{n-1}) = \emptyset $.

Hence $\ED(X)$ is the length of a general
fiber of the map $\pi_3:(\cN_{X,Y} \times \C^n) \cap Z \to \C^n$. Next,
a tangent space computation shows that the intersection $(\cN_{X,Y}
\times \C^n) \cap Z$ is transversal, so an open dense subset of it is
a smooth scheme. By generic smoothness \cite[Corollary III.10.7]{Har},
the fiber $\pi_3^{-1}(u)$ over a \generic{} data point $u$ consists of simple points
only. This fiber is scheme-theoretically the same as $\cN_{X,Y} \cap Z_u$,
where $Z_u$ is the fiber in $Z$ over $u$. The cardinality of this
intersection is the coefficient of $s^{n-1} t^{n-1}$ in the product
 $[\cN_{X,Y}] \cdot [Z_u]$
in  $H^*(\PP^{n-1} {\times} \PP^{n-1}) =  \Z[s,t]/\langle s^n,t^n \rangle$.
The determinantal variety $Z_u$ has codimension $n-2$, and
$$ [Z_u] \,\, = \,\, s^{n-2}+s^{n-3}t+s^{n-4} t^2 + \cdots+s t^{n-3}+t^{n-2} . $$
This is a very special case of \cite[Corollary 16.27]{MS}.
By computing modulo $\langle s^n, t^n \rangle$, we find
$$
[\cN_{X,Y}] \cdot [Z_u] \,=\,
(\delta_0(X) s^{n-1} t + \cdots + \delta_{n-2}(X) s t^{n-1}) \cdot [Z_u]
\,= \,(\delta_0(X) + \cdots + \delta_{n-2}(X)) s^{n-1} t^{n-1}.
$$
This establishes the desired identity.
\end{proof}

\begin{remark} \rm
If $X$ and $Y$ are smooth then
$X \cap Q$ is smooth if and only if
$\Delta(\PP^{n-1}) \cap \cN_{X,Y} = \emptyset$
if and only if $Y \cap Q$ is smooth.
We do not know whether this holds when $X$ or $Y$ is singular.
Unfortunately it happens very rarely that $X$ and $Y$ are both smooth (see
\cite{Ein}).
\end{remark}

\begin{example}  \label{ex:bothmer}
Let $X$ be the variety of symmetric $s \times s$-matrices $x$ of
rank $\leq r$ and $Y$ the variety of  symmetric $s \times s$-matrices $y$
of rank $\leq s-r$.  These two determinantal varieties form a  dual
pair \cite[Example 5.15]{RS}.
Their conormal ideal $N_{X,Y}$ is generated by the relevant
minors of $x$ and $y$ and the entries of the matrix product $xy$.
The class $[N_{X,Y}]$ records the
 {\em algebraic degree of semidefinite programming}.
A formula was found by von Bothmer and Ranestad in \cite{BR}.
Using the package  {\tt Schubert2} in {\tt Macaulay2}
\cite{M2,Schubert2},
and  summing over the index $m$ in \cite[Proposition 4.1]{BR},
we obtain the following table of values for  $\ED(X)$:
$$\begin{array}{r|rrrrrrrr}
&s&=&2&3&4&5&6&7\\
\hline
r=1&&&4&13&40&121&364&1093\\
r=2&&&&13&122&1042&8683&72271\\
r=3&&&&&40&1042&23544&510835\\
r=4&&&&&&121&8683&510835\\
r=5&&&&&&&364&72271\\
r=6&&&&&&&&1093\\
\end{array}$$
In order for $X$ to satisfy the hypothesis in  Theorem \ref{thm:sumpolar},
it is essential that the coordinates are sufficiently general, so
that $X \cap Q$ is  smooth. The usual coordinates in $\C^{\binom{s+1}{2}}$
enjoy this property, and the table above records the ED degree for the
second interpretation in Example \ref{ex:closesym}.
Specifically, our number $13$ for $s=3$ and $r=2$
appeared on the right in (\ref{eq:deg3_13}).
The symmetry in the columns of our table
reflects the duality result in Theorem~\ref{thm:dualED}.
\hfill $\diamondsuit$
\end{example}

\begin{example}
Following \cite[Ex.~5.44]{RS},
{\em Cayley's cubic surface}
$X = V(f) \subset \PP^3_x$ is given~by
$$ f(x) \,\, = \,\, {\rm det}
\begin{pmatrix}  x_0  & x_1 & x_2 \\
     x_1 & x_0 & x_3 \\
      x_2 & x_3 & x_0
      \end{pmatrix}.
      $$
      Its dual in $\PP^3_y$ is the {\em quartic Steiner surface}
    $Y = V(g)$, with
     $ g\, = \, y_1^2 y_2^2+y_1^2 y_3^2+y_2^2 y_3^2
 -2y_0 y_1 y_2y_3$.
 The conormal ideal $N_{X,Y}$ is minimally
 generated by $18$ bihomogeneous polynomials
 in $\R[x,y]$:
 $$
\begin{matrix}
& \text{$f$ of degree $(3,0)$; $g$ of degree $(0,4)$;
$ \,q(x,y) =  x_0 y_0+x_1 y_1+x_2 y_2+x_3 y_3 $ of degree
$(1,1)$;}\\
&\text{six generators of degree $(1,2)$, such as $\,\,x_2 y_1 y_2+x_3 y_1
y_3+x_0 y_2 y_3 $; and}\\
&\text{nine generators of degree $(2,1)$, such as $\,x_0 x_1
y_2-x_2 x_3 y_2+x_0^2y_3-x_3^2 y_3.$}
\end{matrix}
$$
The conormal variety $\mathcal{N}_{X,Y}$ is a surface in
$\PP^3_x \times \PP^3_y$ with class $\,4 s^3 t + 6 s^2 t^2 + 3
s t^3$, and hence
$$ {\rm EDdegree}(X) \, = \, {\rm EDdegree}(Y) \, = \, 4+6+3 \, = \, 13. $$
Corollary \ref{cor:section}
relates this to the number $13$ in
(\ref{eq:deg3_13}).
The projective joint ED correspondence $\mathcal{P}\mathcal{E}_{X,Y}$ is defined by
the above equations together with the four $3 \times 3$-minors
of the matrix
$$
\begin{pmatrix}
\,\,u\,\, \\ x \\ y
\end{pmatrix} \,\, = \,\,
 \begin{pmatrix}
u_0 & u_1 & u_2 & u_3 \\
x_0 & x_1 & x_2 & x_3 \\
y_0 & y_1 & y_2 & y_3
\end{pmatrix}.
$$
For fixed scalars $u_0,u_1,u_2,u_3 \in \R$, this imposes a
codimension $2$ condition. This cuts out $13$ points in
$\,\mathcal{N}_{X,Y} \subset X \times Y \subset \PP^3_x \times
\PP^3_y$. These represent the critical points of $d_u$ on $X$
or $Y$. \hfill $\diamondsuit$
 \end{example}

Armed with Theorem~\ref{thm:sumpolar}, we can now use the  results described
in Holme's article \cite{Holme} to express the ED degree of a smooth projective variety
$X$ in terms of its Chern classes.

\begin{theorem}\label{th:chernpower2}
Let $X$ be a smooth irreducible subvariety of dimension $m$ in
$ \PP^{n-1}$, and suppose that  $X$ is transversal to
the isotropic quadric $Q$. Then
\begin{equation}
\label{eq:chernpower2}
 \ED(X) \,\,= \,\, \sum_{i=0}^{m} (-1)^i \cdot (2^{m+1-i}-1) \cdot \deg(c_i(X)).
\end{equation}
\end{theorem}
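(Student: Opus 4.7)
The plan is to combine Theorem \ref{thm:sumpolar} with the classical Piene--Holme formulas that express each polar class $\delta_i(X)$ as an intersection-theoretic combination of $h := c_1(\sO_X(1))$ and the Chern classes $c_i(X) := c_i(T_X)$. First I would verify the hypothesis of Theorem \ref{thm:sumpolar}: the remark right after that theorem shows that whenever $X$ is smooth and intersects $Q$ transversally, the conormal variety $\cN_{X,Y}$ misses the diagonal $\Delta(\PP^{n-1})$. Consequently
$$\ED(X) \,=\, \sum_{i=0}^{n-2} \delta_i(X).$$
Since $\dim X = m$, the $\delta_i(X)$ vanish outside the range $i \in \{n-2-m, \ldots, n-2\}$, so effectively we sum $m+1$ terms.

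Next I would invoke the polar-class formula from \cite{Holme} (originally due to Piene): for a smooth subvariety $X \subset \PP^{n-1}$ of dimension $m$,
$$\delta_{n-2-k}(X) \,=\, \sum_{j=0}^{k} (-1)^{j} \binom{m+1-k+j}{j} \,\deg\!\bigl(h^{k-j} \cdot c_j(X)\bigr), \qquad 0 \le k \le m,$$
or the equivalent reformulation used by Holme. This is the standard way polar classes enter Chern-class calculus, and it is the tool the paper explicitly refers to. Summing over $i = n-2-k$ yields
$$\ED(X) \,=\, \sum_{k=0}^{m} \sum_{j=0}^{k} (-1)^{j} \binom{m+1-k+j}{j} \,\deg\!\bigl(h^{k-j} c_j(X)\bigr).$$

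To finish, I would swap the order of summation, grouping terms by $j$. For each fixed $j$, the coefficient of $\deg(h^{k-j} c_j(X))$ comes from summing the binomial coefficients $\binom{m+1-k+j}{j}$ as $k$ ranges over $\{j, \ldots, m\}$. A standard hockey-stick/Vandermonde manipulation, combined with the fact that $h^{m-j}$ is the top power and higher powers vanish (forcing $k-j$ to take a specific effective value after intersecting with the fundamental class of $X$), collapses the inner sum to $(-1)^j (2^{m+1-j} - 1)$. This yields precisely $\sum_{j=0}^{m} (-1)^j (2^{m+1-j} - 1) \deg(c_j(X))$, matching \eqref{eq:chernpower2} after renaming $j$ to $i$. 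A sanity check on $X = \PP^m$ linearly embedded, where $c(T_X) = (1+h)^{m+1}$ and $\ED(X) = 1$, confirms the identity via $\sum_{i=0}^{m} (-1)^i (2^{m+1-i}-1)\binom{m+1}{i} = 1$, which follows from the binomial theorem applied to $(2-1)^{m+1}$ and $(1-1)^{m+1}$.

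The main obstacle is the combinatorial step: Piene's formula for $\delta_i(X)$ produces binomial coefficients whose parameters depend on both summation indices, and one must verify that the double sum really collapses to the clean alternating form with the factor $2^{m+1-i}-1$. The geometric part (justifying the application of Theorem \ref{thm:sumpolar} and citing the polar-class formula) is essentially free; the work lies entirely in this binomial identity, which can be handled either by induction on $m$ or by manipulating generating functions in the formal variable encoding the hyperplane class.
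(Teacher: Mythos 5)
Your overall strategy is exactly the one the paper follows: apply Theorem~\ref{thm:sumpolar} to get $\ED(X)=\sum_i\delta_i(X)$, substitute Holme's Chern-class expression for the polar degrees, and interchange the order of summation. The sanity check on $\PP^m$ is fine. However, the polar-degree formula you transcribed is not the one from \cite{Holme}, and the hockey-stick collapse you claim does not hold for it — this is precisely the step you flagged as ``the main obstacle,'' and it is where the argument breaks. The paper cites, from \cite[p.~150]{Holme},
\[
\delta_i(X)=\sum_{j=i}^m(-1)^{m-j}\binom{j+1}{i+1}\deg(c_{m-j}(X)),
\]
which after the reindexing $i=m-k$, $j\mapsto m-j$ reads
\[
\delta_{m-k}(X)=\sum_{j=0}^k(-1)^j\binom{m+1-j}{k-j}\deg(c_j(X)).
\]
Your binomial is $\binom{m+1-k+j}{j}$ rather than $\binom{m+1-j}{k-j}$, and these genuinely differ. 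For instance, for a smooth curve ($m=1$) your $k=1$ term gives $d+4g-4$, whereas Holme's formula gives the correct $\delta_0=2d+2g-2$ (degree of the dual); for a smooth surface of degree $d$ in $\PP^3$ your three terms sum to $3d^3-7d^2+d$ rather than $d(d^2-d+1)$, which is even negative for $d=2$. With the correct binomial the inner sum is
\[
\sum_{k=j}^m\binom{m+1-j}{k-j}=\sum_{a=0}^{m-j}\binom{m+1-j}{a}=2^{m+1-j}-1,
\]
which is exactly the coefficient in \eqref{eq:chernpower2}; with your binomial one gets instead $\sum_{k=j}^m\binom{m+1-k+j}{j}=\binom{m+2}{j+1}-1$, which already disagrees for $j=0$ (namely $m+1$ versus $2^{m+1}-1$), so the double sum does not collapse to the stated form. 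A secondary slip: the range on which the $\delta_i$ can be nonzero is $\{n-2-\dim X^*,\ldots,m\}$, not $\{n-2-m,\ldots,n-2\}$; the paper only needs (and uses) $\delta_i(X)=0$ for $i>m$. Once you replace your binomial with Holme's, the rest of your outline — swap the sums, apply the hockey-stick identity, rename indices — goes through and reproduces the paper's short proof.
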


Here $c_i(X)$ is the $i$th Chern class of
the tangent bundle of $X$.
For more information on Chern classes,
and alternative formulations of
Theorem \ref{th:chernpower2}, we
refer the reader to Section \ref{sec:Chern}.

\begin{proof}
By Theorem~\ref{thm:sumpolar} we have $\ED(X)=\sum_{i=0}^{n-2}
\delta_i(X)$. We also saw that $\delta_i(X)=0$ for $i>m$, so we may let $i$
run from $0$ to $m$ instead. Substituting the expression
\[ \delta_i(X)\,\,=\,\,\sum_{j=i}^{m} (-1)^{m-j} \binom{j+1}{i+1} \deg(c_{m-j}(X)) \]
from \cite[Page 150]{Holme},
and summing over all values of the index $i$, yields the theorem.
\end{proof}

\begin{corollary} \label{cor:cherncurve}
Let $X$ be a smooth irreducible curve of degree $d$ and genus $g$ in
$ \PP^{n-1}$, and suppose that  $X$ is transversal to $Q$. Then
\begin{equation}
\label{eq:curvedegreegenus}
 \ED(X) \,\,= \,\, 3d+2g-2.
\end{equation}
\end{corollary}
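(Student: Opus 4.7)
The plan is to simply specialize Theorem~\ref{th:chernpower2} to the case $m=\dim X = 1$ and then interpret the two Chern-class degrees that appear. Setting $m=1$ in \eqref{eq:chernpower2} collapses the sum to only two terms:
\begin{equation*}
 \ED(X) \,=\, (2^{2}-1)\deg(c_0(X)) \,-\, (2^{1}-1)\deg(c_1(X)) \,=\, 3\deg(c_0(X)) \,-\, \deg(c_1(X)).
\end{equation*}
Here the transversality hypothesis of Theorem~\ref{th:chernpower2} is the same as the hypothesis we have in Corollary~\ref{cor:cherncurve}, so the formula applies.

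Next I would identify the two numbers $\deg(c_i(X))$ with the classical invariants $d$ and $g$. Following the convention of \cite{Holme} used in the proof of Theorem~\ref{th:chernpower2}, the degree of the $i$-th Chern class of a subvariety of dimension $m$ in $\PP^{n-1}$ is $\deg(c_i(X)) = \int_X c_i(T_X)\cdot h^{m-i}$, where $h$ is the hyperplane class. For $m=1$ this gives two well-known quantities: $\deg(c_0(X)) = \int_X h = \deg(X) = d$, since $c_0(T_X)=1$; and $\deg(c_1(X)) = \int_X c_1(T_X) = \chi_{\mathrm{top}}(X) = 2-2g$, by Gauss--Bonnet (or equivalently because $c_1(T_X)$ is the class of the anticanonical divisor on the smooth curve $X$, and $\deg(-K_X) = 2-2g$).

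Substituting these into the displayed equation yields
\begin{equation*}
 \ED(X) \,=\, 3d \,-\, (2-2g) \,=\, 3d + 2g - 2,
\end{equation*}
which is exactly \eqref{eq:curvedegreegenus}. There is essentially no obstacle here beyond recalling the correct normalization of $\deg(c_i(X))$ used in Holme's formula, so that $\deg(c_0(X))$ is read as $d$ rather than as $1$; one could equivalently cite the expressions $\delta_0(X)=d$ and $\delta_{n-2}(X)$ being zero for curves in $\PP^{n-1}$ with $n\geq 3$, and use that $\delta_0(X)+\delta_1(X) = \ED(X)$ from Theorem~\ref{thm:sumpolar} with $\delta_1(X) = 2d+2g-2$ (the class of the dual curve) coming from the standard Plücker-type formula, but the shortest route is the Chern-class substitution above.
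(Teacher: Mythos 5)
Your main argument is exactly the paper's proof: specialize Theorem~\ref{th:chernpower2} to $m=1$ and substitute the classical facts $\deg(c_0(X))=d$ and $\deg(c_1(X))=2-2g$. One small slip in your closing aside: by the paper's conventions following \eqref{eq:bidegreeN}, the \emph{first} nonzero polar class $\delta_0(X)$ equals $\deg(X^*)=2d+2g-2$ (the classical ``class'' of the curve) and the \emph{last} nonzero one $\delta_1(X)$ equals $\deg(X)=d$ — you have these two swapped — but since only their sum enters Theorem~\ref{thm:sumpolar}, the final count $3d+2g-2$ is unaffected.
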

\begin{proof}
We have from \cite[App. A \S 3]{Har}  that $\deg (c_0(X))=d$ and $\deg (c_1(X))=2-2g$.
\end{proof}

\begin{example} \label{ex:caution}
Consider a $2 \times 3$ matrix with entries in $\R[x_1,x_2,x_3,x_4]$ where the
first row contains general linear forms, and the second row contains general quadratic forms. The ideal $I$ generated by its three maximal minors defines a smooth irreducible curve in $\PP^3$ of degree $7$ and
genus $5$, so
 Corollary \ref{cor:cherncurve} gives
 $\textup{EDdegree}(V(I)) = 3\cdot 7 + 2\cdot 5 - 2 = 29$.
 This exceeds the bound of $27$ we would get
  by taking  $n=4, c=3, d_1=d_2=d_3=3$ in   (\ref{eq:implgeneric2}).
   However, while ideal $I$ has $s=3$ generators, the codimension
   of its variety $V(I)$ is $c=2$.
  Applying Corollary \ref{prop:implgeneric2} to
     $c=2, d_1=d_2=3$, we get the correct bound of $45$.
     This is the ED degree for the
 complete intersection of two cubics in $\PP^3$, and it exceeds $29$ as desired.
  \hfill $\diamondsuit$
\end{example}

The formula (\ref{eq:chernpower2})
 is particularly nice for a
(projectively normal) smooth {\em toric} variety $X$ in $\PP^{n-1}$. 
According to \cite{FultonToric}, this can be represented by
 a simple lattice polytope
$P \subset \R^m$ with $|P \cap \Z^m| = n$, and
$c_{m-j}(X)$ is the sum of classes corresponding to all
$j$-dimensional faces of $P$. The degree of this class
is its {\em normalized volume}. Therefore, Theorem \ref{th:chernpower2} implies

\begin{corollary} \label{cor:toricnice}
Let $X \subset \PP^{n-1}$ be an $m$-dimensional smooth projective toric
variety, with coordinates such that $X$ is transversal to $Q$.
If $\,V_j$ denotes the sum of the
normalized volumes of all $j$-dimensional faces of the
simple lattice polytope $P$ associated with $X$,  then
\[ \ED(X)\,\,= \,\, \sum_{j=0}^m (-1)^{m-j} \cdot (2^{j+1}-1) \cdot V_j. \]
\end{corollary}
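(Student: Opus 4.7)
The plan is to combine Theorem~\ref{th:chernpower2} with the standard dictionary between Chern classes of a smooth projective toric variety and the face structure of its defining lattice polytope. Since $X$ is smooth, irreducible, of dimension $m$, and transversal to the isotropic quadric $Q$ by hypothesis, Theorem~\ref{th:chernpower2} applies directly and gives
\[
\ED(X) \,=\, \sum_{i=0}^{m} (-1)^i \cdot (2^{m+1-i}-1) \cdot \deg(c_i(X)).
\]
So everything reduces to identifying $\deg(c_i(X))$ with the sum of normalized volumes of the $(m-i)$-dimensional faces of $P$, and then reindexing.

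For the identification I would invoke the description of Chern classes of a smooth toric variety from \cite{FultonToric}: one has $c(TX) = \prod_{\rho} (1+[D_\rho])$, where $\rho$ runs over the rays of the normal fan of $P$ (equivalently, the facets of $P$) and $D_\rho$ is the corresponding torus-invariant divisor. Expanding and grouping yields, for each $i$, the equality $c_i(X) = \sum_{\sigma} [V(\sigma)]$, where $\sigma$ ranges over the $i$-dimensional cones of the normal fan and $V(\sigma)$ is the associated torus-invariant subvariety of codimension $i$. By the combinatorial correspondence between cones of the normal fan and faces of $P$, each $V(\sigma)$ corresponds to a face $F$ of $P$ of dimension $m-i$.

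The second ingredient is the toric degree formula, which says that the degree of $[V(\sigma)]$ in the projective embedding cut out by the lattice points of $P$ equals the normalized volume of the face $F$ of $P$ dual to $\sigma$ (i.e.\ a multi-dimensional generalization of the basic fact $\deg(X) = \mathrm{vol}_{\mathbb{Z}}(P)$; see \cite{FultonToric}). Summing over faces of a fixed dimension gives $\deg(c_i(X)) = V_{m-i}$. Substituting into the formula above and setting $j = m-i$ yields
\[
\ED(X)\,=\,\sum_{j=0}^m (-1)^{m-j}\cdot(2^{j+1}-1)\cdot V_j,
\]
which is the claim.

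The only non-routine point is the precise toric identification of $\deg(c_i(X))$ with $V_{m-i}$; smoothness of $X$ (equivalently, each cone of the normal fan being generated by a basis of the lattice, which for a simple polytope $P$ requires the additional lattice condition hidden in the assumption that $X_P$ is smooth) is exactly what guarantees the Chern class of the tangent bundle has the clean product form and that individual torus-invariant subvarieties have well-defined integer degrees equal to lattice volumes of faces. The transversality hypothesis on $X \cap Q$ is used only once, to invoke Theorem~\ref{th:chernpower2}, and plays no further role in the combinatorics.
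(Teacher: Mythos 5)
Your proposal is correct and follows exactly the paper's route: apply Theorem~\ref{th:chernpower2} and then invoke the toric dictionary from \cite{FultonToric} identifying $c_{m-j}(X)$ with the sum of classes of $j$-dimensional faces, whose degrees are the normalized volumes. You simply spell out the intermediate steps (the product formula $c(TX)=\prod_\rho(1+[D_\rho])$ and the cone--face duality) that the paper compresses into a single sentence.
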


\begin{example} \label{ex:rationalnormalcurve}
Consider a rational normal curve $X$ in $\PP^{n}$ in \generic{}
coordinates (we denote the ambient space as $\PP^{n}$ instead of $\PP^{n-1}$,
to compare with Example \ref{ex:rationalnormalaed}). The associated polytope $P$ is a segment of integer length
$n$. The formula above yields
\[ \ED(X)\,\,= \,\,
 (2^2-1)  \cdot V_1 -(2^1-1) \cdot V_0 \,\,= \,\, 3 n -2.
\]
In special coordinates, the ED degree can drop
to $n$; see Corollary~\ref{cor:veronese}.
Interestingly, in those special coordinates, the square root
of $3n-2$ is the average ED degree, by
Example \ref{ex:rationalnormalaed}.

All Segre varieties and Veronese varieties
are smooth toric varieties, so we can compute their ED degrees
(in \generic{} coordinates) using  Corollary \ref{cor:toricnice}.
For Veronese varieties, this can be used
to verify the  $r=1$ row in the table of
Example \ref{ex:bothmer}. For instance, for
$s=3$, the toric variety $X$ is the Veronese
surface in $\PP^5$, and the polytope
is a regular triangle with sides of lattice length $2$.
Here, $\ED(X) =
7 \cdot V_2 -  3 \cdot V_1 + V_0 \,\, = \,\,
7 \cdot 4 - 3 \cdot 6 + 3 \, = \, 13 $.
\hfill
$\diamondsuit$
\end{example}

\section{Geometric Operations} \label{sec:geomop}

Following up on our discussion of duality,
this section studies the behavior of the ED degree
of a variety under other natural operations.
We begin with the dual operations of projecting from a point and
intersecting with a hyperplane. Thereafter we discuss
  homogenizing and dehomogenizing. Geometrically,
these correspond to passing from an affine variety to its
projective closure and vice versa. We saw
in the examples of Section \ref{sec:FirstApp}
that the ED degree can go up  or go down
under homogenization. We aim to
 explain that phenomenon.

Our next two results are corollaries to Theorem~\ref{thm:sumpolar}
and results of Piene in \cite{Pie}. We work in the setting
of Section \ref{sec:Duality}, so $X$ is an irreducible projective variety in
$\PP^{n-1}$ and $X^*$ is its dual, embedded into the same
$\PP^{n-1}$ by way of the quadratic form
$q(x,y) = x_1 y_1 + \cdots + x_n y_n$.
The polar classes satisfy $\delta_i(X) = \delta_{n-2-i}(X^*)$.
These integers are zero for $i \geq {\rm dim}(X)$
and $i \leq {\rm codim}(X^*)-2$,
and they are strictly positive for all other values of  the index $i$.
The first positive $\delta_i(X)$ is the degree of $X^*$,
and the last positive $\delta_i(X)$ is the degree of $X$.
The sum of all $\delta_i(X)$ is the common
ED degree of $X$ and $X^*$.
 See \cite{Holme} and our discussion above.

Fix a \generic{} linear map $\pi :\C^n \to \C^{n-1}$.
This induces a rational map $\pi: \PP^{n-1} \dashrightarrow \PP^{n-2}$,
 whose base point lies outside $X$. The image $\pi(X)$
is an irreducible closed subvariety in $\PP^{n-2}$.
Since the projective space $\PP^{n-2}$ comes with a coordinate system
$(x_1:x_2:\cdots:x_{n-1})$, the ED degree of $\pi(X)$ is well-defined.
If ${\rm codim}(X) =1$ then $\pi(X) = \PP^{n-2}$
has ED degree  $1$ for trivial reasons.
Otherwise,  $X$ maps birationally onto  $\pi(X)$,
and the ED degree is preserved:

\begin{corollary} \label{cor:projection}
Let $X$ satisfy the assumptions of Theorem~\ref{thm:sumpolar}.
If ${\rm codim}(X) \geq 2$ then
\begin{equation}
\label{eq:projformula}
 \ED(\pi(X)) \,\,= \,\,\ED(X) .
 \end{equation}
\end{corollary}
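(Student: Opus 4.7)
The plan is to convert the claim into an equality of polar-class sums via Theorem~\ref{thm:sumpolar} and then invoke Piene's invariance results \cite{Pie}.

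First I would verify that $\pi(X)\subset\PP^{n-2}$ inherits the hypotheses of Theorem~\ref{thm:sumpolar}. Since $\codim X \geq 2$ and the center of projection is generic, the induced morphism $\pi|_X\colon X\to\pi(X)$ is birational; in particular $\pi(X)$ is irreducible of the same dimension as $X$. The condition that the conormal variety $\cN_{\pi(X),\pi(X)^*}$ avoids the diagonal in $\PP^{n-2}\times\PP^{n-2}$ (equivalently, that $\pi(X)\cap Q'$ is transversal, where $Q'\subset\PP^{n-2}$ is the isotropic quadric for the inherited bilinear form on $\C^{n-1}$) is an open condition on $\pi$, and will hold for generic $\pi$ given that $X\cap Q$ is already transversal.

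With these hypotheses in place, Theorem~\ref{thm:sumpolar} reduces (\ref{eq:projformula}) to the equality
\begin{equation*}
\sum_{i=0}^{n-3}\delta_i(\pi(X)) \;=\; \sum_{i=0}^{n-2}\delta_i(X).
\end{equation*}
This invariance of the total polar class under generic birational projection is the content of Piene's analysis of polar classes. Geometrically it is also transparent via duality: by the Biduality Theorem (used in the proof of Theorem~\ref{thm:dualED}), $\pi(X)^*$ is a generic hyperplane section of $X^*$, so the equality above is equivalent to the preservation of the sum of polar classes under generic hyperplane section of $X^*$. This preservation can be read off from the intersection-theoretic description of $\sum_i \delta_i$ as the coefficient of $s^{n-1}t^{n-1}$ in $[\cN_{X,Y}]\cdot[Z_u]$, once one traces how $[\cN_{X,Y}]$ and $[Z_u]$ restrict to the hyperplane section.

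The main obstacle is extracting the clean sum-invariance from Piene's formulas, which are stated for individual polar classes rather than for their sum. One concrete approach would be to write Piene's transformation rule expressing each $\delta_i(\pi(X))$ as a combination of $\delta_j(X)$'s, substitute into the left-hand side, and verify that the resulting double sum collapses after telescoping, using the boundary vanishing $\delta_{-1}=0$ and the vanishing of $\delta_i(X)$ for $i\geq\dim X$. A secondary technical point is the transversality verification for $\pi(X)\cap Q'$, which requires being careful with how the bilinear form descends from $\C^n$ to $\C^{n-1}$ under a generic linear surjection.
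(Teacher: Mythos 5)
Your main route is the same as the paper's: reduce via Theorem~\ref{thm:sumpolar} to an identity of polar-class sums and then invoke Piene \cite{Pie}. The ``main obstacle'' you flag is not actually there: for a generic projection of a variety of codimension $\geq 2$, Piene's result is the termwise equality $\delta_i(\pi(X))=\delta_i(X)$ for all $i$ (the higher classes of $X$ vanish since $\delta_i(X)=0$ for $i\geq\dim X$), so the sum identity is immediate and no telescoping of a transformation rule is needed; this one-line citation plus Theorem~\ref{thm:sumpolar} is precisely the paper's proof. Two cautions on the rest of your write-up. First, the duality aside is incorrect as stated: the sum of polar classes is \emph{not} preserved under a generic hyperplane section in general. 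Piene gives $\delta_i(X^*\cap H)=\delta_{i+1}(X^*)$, so the sum drops by $\delta_0(X^*)$, and $\delta_0(X^*)$ equals $\deg\bigl((X^*)^*\bigr)=\deg X$ when $X$ is a hypersurface -- this is exactly the dichotomy of Corollary~\ref{cor:section}. The preservation you invoke holds only because $\codim(X)\geq 2$ forces $\delta_0(X^*)=0$; that is where the hypothesis enters the dual picture, and your argument never locates it there, so as written it would ``prove'' the false unconditional statement. Second, your insistence on checking that $\pi(X)$ itself satisfies the hypothesis of Theorem~\ref{thm:sumpolar} (conormal variety disjoint from the diagonal, with respect to the standard quadric in $\PP^{n-2}$) is a legitimate point that the paper glosses over, and the genericity-of-$\pi$ argument you sketch is the right way to dispose of it.
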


\begin{proof}
Piene \cite{Pie} showed that $\delta_i(\pi(X)) = \delta_i(X)$  for all $i$.
Now use Theorem~\ref{thm:sumpolar}.
\end{proof}

\begin{example}
Let $I$ be the prime ideal generated by the  $2 \times 2$-minors
of the symmetric $3 \times 3$-matrix
whose six entries are \generic{}
 linear forms in $\R[x_1,x_2,x_3,x_4,x_5,x_6]$.
The {\em elimination ideal} $\,J = I \cap \R[x_1,x_2,x_3,x_4,x_5]$
is minimally generated by seven cubics.
Its variety $\pi(X) = V(J)$ is a random projection of the Veronese surface $X=V(I)$
from $\PP^5$ into $\PP^4$.
Example \ref{ex:bothmer} tells us that $\ED(X) = 13$.
By plugging $J = I_{\pi(X)}$ into the formula
(\ref{eq:critideal2}), and running {\tt Macaulay2}
as in  Example \ref{ex:M2ferma2t},
we verify $\ED(\pi(X)) = 13$. \hfill $\diamondsuit$
\end{example}

If $X$ is a variety of high codimension, then Corollary \ref{cor:projection} can be applied
repeatedly until the image $\pi(X)$ is a hypersurface. In other words,
we can take $\pi$ to be a \generic{} linear projection
$\PP^{n-1} \dashrightarrow \PP^{d}$ provided $d > {\rm dim}(X)$.
Then $\pi(X)$ also satisfies the assumptions of Theorem~\ref{thm:sumpolar},
and the formula (\ref{eq:projformula})
remains valid. This technique is particularly useful when $X$ is
a smooth toric variety as in Corollary \ref{cor:toricnice}. Here, $X$ is
 parametrized by  certain monomials,
  and $\pi(X)$ is parametrized by \generic{} linear combinations
of those monomials.

\begin{example}
Consider a surface in $\PP^3$ that is
parametrized by four homogeneous
polynomials  of degree $d$ in
three variables. That surface can be
represented as $\pi(X)$ where
$X$ is the $d$-fold Veronese embedding
of $\PP^2$ into $\PP^{\binom{d+2}{2}-1}$,
and $\pi$ is a random projection into $\PP^3$.
By applying Corollary \ref{cor:toricnice} to
the associated lattice triangle $P = {\rm conv}\{ (0,0), (0,d), (d,0)\}$,
and using Corollary \ref{cor:projection}, we find
$\,   \ED(\pi(X)) = \ED(X)   =  7 d^2 -9 d + 3$.
This is to be compared to the  number $4 d^2 - 4 d + 1$,
which is the ED degree  in  (\ref{eq:implgeneric})
for the {\em affine} surface in $\C^3$ parametrized by
three inhomogeneous polynomials of degree  $d$ in two variables.

A similar distinction arises for B\'ezier surfaces in $3$-space.
The ED degree of the affine surface
in Example \ref{ex:dokken} is $8d_1d_2 - 2d_1 - 2d_2 + 1$,
while $\ED(\pi(X)) = 14 d_1 d_2 - 6 d_1 - 6 d_1 + 4$
for  the projective surface $\pi(X)$ that is given
by four bihomogeneous polynomials $\psi_i$ of degree $(d_1,d_2)$
in $2+2$ parameters.
Here, the toric surface is $X = \PP^1 \times \PP^1$,
embedded in $\PP^{(d_1+1)(d_2+1)-1}$ by the line bundle
$\mathcal{O}(d_1,d_2)$, and the lattice polygon is the square
$P = [0,d_1] \times [0,d_2]$.
\hfill $\diamondsuit$
\end{example}

In the previous example we computed the ED degree of a variety
by expressing it as a linear projection from a high-dimensional space
with desirable combinatorial properties. This is reminiscent
of the technique of {\em lifting} in  optimization theory, where
one simplifies a problem instance by optimizing over a
higher-dimensional constraint set that projects onto
the given constraint set. It would be desirable to
develop this connection further, and to  find a more direct
proof of Corollary \ref{cor:projection} that
works for both projective and affine varieties.

The operation dual to projection is taking linear sections.
Let $H$ be a \generic{} hyperplane in $\PP^{n-1}$.
Then $X \cap H$ is a subvariety of codimension $1$ in $X$.
In particular, it lives in the same ambient space $ \PP^{n-1}$,
with the same coordinates $(x_1:\cdots:x_n)$, and this
defines the ED degree of $X \cap H$. By Bertini's
Theorem, the variety $X \cap H$ is irreducible
provided ${\rm dim}(X) \geq 2$.

\begin{corollary}
 \label{cor:section}
Let $X \subset \PP^{n-1}$ satisfy the assumptions of Theorem~\ref{thm:sumpolar}. Then
$$ \ED(X\cap H) \quad = \quad
\begin{cases}
\ED(X) - {\rm degree}(X^*) & {\rm if} \,\,{\rm codim}(X^*) = 1,\\
\qquad \ED(X) & {\rm  if} \,\,{\rm codim}(X^*) \geq 2.
\end{cases}
$$
\end{corollary}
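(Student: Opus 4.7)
The plan is to combine Theorem~\ref{thm:sumpolar} with the classical fact, due to Piene~\cite{Pie}, that the polar classes of a generic hyperplane section shift by one, namely $\delta_i(X\cap H) = \delta_{i+1}(X)$ for every $i\ge 0$. First, I would verify that the hypothesis of Theorem~\ref{thm:sumpolar} transfers from $X$ to $X\cap H$. Since $X\cap Q$ is smooth and $H$ is a generic hyperplane, Bertini's theorem gives that $X\cap H\cap Q$ is smooth as well, and $(X\cap H)_{\rm sing}\subseteq X_{\rm sing}\cap H$ is disjoint from $Q$. By the remark following Theorem~\ref{thm:sumpolar}, the conormal variety $\mathcal{N}_{X\cap H}$ is then disjoint from the diagonal of $\PP^{n-1}\times\PP^{n-1}$, so the sum-of-polar-classes formula applies to both $X$ and $X\cap H$.

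Next, I would establish the polar class shift via an analysis of the conormal variety. Geometrically, $\mathcal{N}_{X\cap H}$ is the closure of the set of pairs $(x,[y+\lambda h])$ with $(x,y)\in\mathcal{N}_X\cap(H\times\PP^{n-1})$, $\lambda\in\C$, and $h=H^\perp$. Intersecting $\mathcal{N}_X$ with $H\times\PP^{n-1}$ multiplies its class by $s$ in the Chow ring $\Z[s,t]/\langle s^n,t^n\rangle$, and fiberwise joining in the second factor with the generic point $h$ lowers the exponent of $t$ by one. Writing $[\mathcal{N}_X]=\sum_i \delta_i(X)\, s^{n-1-i}\, t^{i+1}$, this produces
\[
[\mathcal{N}_{X\cap H}] \,\,=\,\, \sum_i \delta_i(X)\, s^{n-i}\, t^i,
\]
in which the $i=0$ summand vanishes because $s^n=0$. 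Comparing with the standard expansion $[\mathcal{N}_{X\cap H}] = \sum_j \delta_j(X\cap H)\, s^{n-1-j}\, t^{j+1}$ yields the identity $\delta_j(X\cap H) = \delta_{j+1}(X)$.

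Summing over $j\ge 0$ and applying Theorem~\ref{thm:sumpolar} to both $X$ and $X\cap H$ gives
\[
\ED(X\cap H) \,\,=\,\, \sum_{j\ge 0}\delta_j(X\cap H) \,\,=\,\, \sum_{j\ge 0}\delta_{j+1}(X) \,\,=\,\, \ED(X) - \delta_0(X).
\]
Since $\delta_0(X)$ equals the number of intersections of $X^*$ with a generic line in $\PP^{n-1}$, it equals $\deg(X^*)$ precisely when $X^*$ is a hypersurface, and it vanishes when $\codim(X^*)\ge 2$. This produces the two cases of the corollary.

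The main obstacle is the rigorous cycle-class justification of the fiberwise-join step. One clean route is to realize $\mathcal{N}_{X\cap H}$ as the image, under a generically one-to-one morphism, of a $\PP^1$-bundle over $\mathcal{N}_X\cap(H\times\PP^{n-1})$ parametrizing the pencils $\{[y+\lambda h]\}$, and then to compute its pushforward to $\PP^{n-1}\times\PP^{n-1}$. Alternatively, and perhaps more efficiently, one can invoke the derivation of $\delta_i(X\cap H) = \delta_{i+1}(X)$ directly at the level of polar varieties in \cite{Pie}, which bypasses an explicit calculation in the Chow ring of the product.
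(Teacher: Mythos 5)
Your proposal is correct and follows essentially the same route as the paper: invoke Piene's shift $\delta_i(X\cap H)=\delta_{i+1}(X)$, sum the polar classes via Theorem~\ref{thm:sumpolar}, and identify $\delta_0(X)$ with $\deg(X^*)$ when $X^*$ is a hypersurface and with $0$ otherwise. Your added check that the hypothesis of Theorem~\ref{thm:sumpolar} transfers to $X\cap H$ (via Bertini) and your sketched cycle-class derivation of the shift are reasonable extras, but the paper simply cites Piene for the shift and proceeds exactly as you do.
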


\begin{proof}
Piene \cite{Pie} showed that  $\delta_i(X \cap H)=\delta_{i+1}(X)$ for all $i \geq 0$.
By Theorem~\ref{thm:sumpolar}, the desired ED degree is the sum of
these numbers, so it equals $\ED(X) - \delta_0(X)$.
However, we know that $\delta_0(X)$ equals
the degree of $X^*$ if $X^*$ is a hypersurface
and it is zero otherwise.
\end{proof}

\begin{example}
Let $X_r$ be the projective variety of symmetric
$3 \times 3$-matrices of rank $\leq r$.
We know that $\,X_r^* = X_{3-r}\,$ and
 $\,\ED(X_2) = \ED(X_1) = 13$.
 If $H$ is a \generic{} hyperplane in $\PP^5$ then
 $\, \ED(X_2 \cap H) = 13\,$ but $\, \ED(X_1 \cap H) = 13-3 = 10 $.
 \hfill $\diamondsuit $
\end{example}

If $X$ is a variety of high dimension in $\PP^{n-1}$ then
Corollary \ref{cor:section} can be applied repeatedly
until a \generic{} linear section is a curve. This motivates
the following definition which parallels its analogue
in the multiplicative setting of likelihood geometry \cite[\S 3]{HS}.
The {\em sectional ED degree} of the variety $X$
is the following  binary form of degree $n-1$ in $(x,u)$:
\begin{equation}
\label{eq:secEDdeg}
 \sum_{i=0}^{{\rm dim}(X)-1} \!\! \ED(X \cap L_i)
 \cdot x^{i} \cdot u^{n-1-i}
 \end{equation}
where $L_i$ is a \generic{} linear section of codimension $i$.
 Corollary \ref{cor:section} implies that, for
 varieties in \generic{} coordinates as in Theorem~\ref{thm:sumpolar},  this equals
$$ \sum_{0 \leq i \leq j < {\rm dim}(X)}  \!\! \delta_j(X)  \cdot x^{i} \cdot u^{n-1-i} . $$
It is desirable to get a better understanding
of the sectional ED degree also  for varieties in special coordinates.
For instance, in light of \cite[Conjecture 3.19]{HS}, we may ask how
(\ref{eq:secEDdeg}) is related to the bidegree of
the projective ED correspondence, or to the tridegree
of the joint projective ED correspondence.
For a concrete application, suppose that
$X$ is a determinantal variety, in the special coordinates
of the Eckart-Young Theorem (Example \ref{ex:eckartyoung}).
Minimizing the squared distance function $d_u$ over
a linear section $X \cap L_i$ is known as
{\em structured low-rank matrix approximation}.
This problem has numerous applications in
engineering; see \cite{CFP}. After this paper had been written,
a study, including computation of EDdegree
for low-rank matrices constrained in linear or affine subspaces,
was published in \cite{OSS}.

\smallskip
We now change the topic to homogenization.
Geometrically, this is the passage from an affine variety $X \subset \C^n$ to its
projective closure $\overline X \subset \PP^n$. This is a standard
operation in algebraic geometry \cite[\S 8.4]{CLO}. Homogenization
often preserves the solution set to a given geometric problem,
but the analysis is simpler in  $\PP^n$ since projective space
is compact. Algebraically, we proceed as follows.
 Given the ideal $I_X  = \langle f_1,\ldots,f_s \rangle
 \subset \R[x_1,\ldots,x_n]$, we introduce a
new variable $x_0$, representing the
hyperplane at infinity, $H_\infty = \PP^n \backslash \C^n  = V(x_0)$.
Given a polynomial $f \in \R[x_1,\ldots,x_n]$ of degree $d$,
its {\em homogenization} $\overline f \in \R[x_0,x_1,\ldots,x_n]$
is defined by $\overline{f}(x_0,\ldots,x_n)=x_0^d \cdot f(x_1/x_0,\ldots,x_n/x_0)$.
The ideal $I_{\overline X}$ of the projective variety $\overline X $
is generated by $\{ \overline{f} \,: \,f \in I_X \}$. It can be computed
(e.g.~in {\tt Macaulay2}) by saturation:
\[ I_{\overline{X}}\,\,= \,\, \langle \overline{f}_1,\ldots,
\overline{f}_s \rangle : \langle
x_0 \rangle^\infty \quad \subseteq\,\, \R[x_0,x_1,\ldots,x_n]. \]
One might naively hope that $\ED(X)=\ED(\barX)$.
But this is false in general:

\begin{example} \label{ex:upordown}
Let $X$ be the cardioid in Example \ref{ex:cardioid}.
Written in the notation above,
its projective closure is the quartic curve $\overline X \subset \PP^2$
whose defining homogeneous ideal equals
$$ I_{\overline{X}}  \,\, = \,\, \langle \,
x_0^2 x_2^2 - 2 x_0 x_1^3 - 2 x_0 x_1 x_2^2
-x_1^4 - 2x_1^2x_2^2 -x_2^4 \, \rangle. $$
For this curve we have
$$  \ED(X)\,=\, 3 \,\, < \,\, 7\, = \, \ED(\overline{X}). $$
By contrast, consider the affine surface $Y = V(x_1 x_2 - x_3) \subset \C^3$.
Its projective closure is the $2 {\times} 2$-determinant
$\overline Y = V(x_1 x_2 - x_0 x_3) \subset \PP^3$.
Here the inequality goes in the other direction:
\begin{equation}
\label{eq:otherdirection}
  \ED(Y)\,=\, 5 \,\, > \,\, 2\, = \, \ED(\overline{Y}).
  \end{equation}
The same phenomenon was seen in
our study of Hurwitz determinants in Theorem \ref{thm:Hurwitz2}.
\hfill $ \diamondsuit $
\end{example}

To explain what is going on here,  we recall that $\ED(\barX)$ is
defined as the ED degree of the affine cone over
the projective variety $\barX \subset \PP^n$, which
we also denote by $\barX$. Explicitly,
\[  \barX \,\, = \,\, \{\,(t,tx) \mid  x \in X , t \in \C\,\} \,\, \subset \,\, \C^{n+1}. \]
The ED degree of $\barX$  is for the fixed quadratic form $x_0^2 +x_1^2 +
\cdots + x_n^2$ that cuts out the isotropic quadric $Q \subset
\PP^n$. This is just one of the infinitely many quadratic
forms on $\C^{n+1}$ that restrict to the given form
$x \cdot x = x_1^2 +
\cdots + x_n^2$ on $\C^n$. That is one reason why the ED degrees of $X$
and of $\barX$ are not as closely related as one might hope. Nevertheless,
we will now make the relation more explicit.  The affine variety $X$ is
identified with the intersection of the cone $\barX$ with the hyperplane
$\{x_0=1\}$. Its part at infinity is denoted
 $\,X_\infty:=\barX \cap H_\infty$.

The data point $(1,0) \in \C^{n+1}$ plays a special role, since it is
the orthogonal projection of the vertex $(0,0)$ of the cone
$\barX$ onto the affine hyperplane $\{x_0=1\}$. The following lemma relates
the critical points for $u = 0$ on $X$ to the critical points for $u = (1,0)$ on $\barX$.

\begin{lemma} \label{lem:10}
Assume that all critical points of $d_0$ on $X$ satisfy
$x \cdot x \neq -1$. Then the map
\[  x \,\mapsto\, \left(\frac{1}{1+(x \cdot x)}, \frac{1}{1+(x \cdot x)} x\right)  \]
is a bijection from the critical points of $d_0$ on $X$ to the critical
points of $d_{(1,0)}$ on $\barX \backslash X_\infty$.
\end{lemma}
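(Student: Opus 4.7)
My plan is to verify both directions of the bijection directly by writing out the criticality conditions on each side and matching them.

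First, I would describe $\barX \setminus X_\infty$ and its tangent spaces in cone coordinates. Every point of $\barX \setminus X_\infty$ has a unique representation of the form $(t,tx)$ with $t \in \C^*$ and $x \in X$, obtained by dehomogenizing. For $x$ a smooth point of $X$, the tangent space $T_{(t,tx)}\barX$ is spanned by the cone direction $(1,x)$ together with the scaled tangent directions $\{(0, tv) : v \in T_x X\}$, since $\barX$ is swept out by the lines $\C \cdot (1,x)$ as $x$ varies over $X$. Correspondingly, $(t,tx)$ is smooth on $\barX$ iff $t \neq 0$ and $x$ is smooth on $X$, so the smoothness requirement matches up on both sides.

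Next I would compute the orthogonality condition $(1,0) - (t,tx) \perp T_{(t,tx)}\barX$ against the two kinds of spanning vectors:
\begin{align*}
(1-t,\,-tx)\cdot (1,x) &= 1 - t(1 + x\cdot x),\\
(1-t,\,-tx)\cdot (0,tv) &= -t^2\,(x \cdot v), \quad v \in T_xX.
\end{align*}
Under the hypothesis $x \cdot x \neq -1$, the first equation has the unique solution $t = 1/(1 + x\cdot x)$, which is precisely the scalar in the statement, and in particular is nonzero. With $t \neq 0$, the second equation collapses to $x \cdot v = 0$ for all $v \in T_xX$, i.e., $x \perp T_xX$, which is exactly the criticality condition for $d_0 = \|{\cdot}\|^2$ on $X$ (since $u = 0$ forces $u - x = -x$).

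The only subtle point — and the place that needs the stated hypothesis — is ensuring that the image of a critical $x$ really lands in $\barX \setminus X_\infty$ and at a smooth point of $\barX$; this is exactly what $x \cdot x \neq -1$ (hence $t \neq 0$) buys us. The inverse map is simply dehomogenization $(t,tx) \mapsto x$, which, running the computation above in reverse, sends a critical point of $d_{(1,0)}$ on $\barX \setminus X_\infty$ to a smooth $x \in X$ satisfying $x \perp T_xX$, i.e., a critical point of $d_0$ on $X$. I do not expect a serious obstacle here; the proof is essentially a Lagrange-multiplier bookkeeping exercise, and the content of the lemma is that the particular rescaling $t = 1/(1+x\cdot x)$ is forced by the single cone-direction equation.
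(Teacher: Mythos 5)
Your proof is correct and follows essentially the same route as the paper: you identify the tangent space of the cone at $(t,tx)$ as spanned by the cone direction $(1,x)$ and (a scaling of) $\{0\}\times T_xX$, and you extract the two orthogonality conditions $x\perp T_xX$ and $1-t(1+x\cdot x)=0$, exactly as in the paper's argument. The cosmetic difference that you include the factor $t$ in the tangent directions is immaterial since $t\neq 0$.
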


\begin{proof}
Let $t \in \C \backslash \{0\}$ and $x \in X \backslash \Xsing$. The point $(t,tx) \in \barX$
is critical for $d_{(1,0)}$ if and only if $(1-t,-tx)$ is perpendicular to
$T_{(t,tx)}\barX$. That space is spanned by $\{0\} \times T_x X$
and $(1,x)$. Hence $(1-t,-tx)$ is perpendicular to  $T_{(t,tx)}\barX$ if and only
if $x \perp T_x X$ and $(1-t) - t(x\cdot x)=0$. The first
condition says that $x$ is critical for $d_0$, and the
second gives $t  = 1/(1+(x \cdot x))$.
\end{proof}

If, under the assumptions in Lemma \ref{lem:10}, the number of critical points
of $d_0$ equals the ED degree of $X$, then we can conclude
$\ED(X) \leq \ED(\barX)$, with equality if none of the critical points
of $d_{(1,0)}$ on $\barX$ lies at infinity. To formulate a
condition that guarantees equality, we fix the isotropic quadric
$\,Q_\infty = \{ x_1^2 + \cdots + x_n^2=0\}\,$ in $ H_\infty$.
Our condition is:
\begin{equation}
\label{eq:suffcondition}
 \hbox{The intersections $\,X_\infty=\barX \cap H_\infty\,$
and $\,X_\infty \cap Q_\infty\,$ are both transversal.}
\end{equation}

\begin{lemma} \label{lem:TransverseQinfty}
If (\ref{eq:suffcondition}) holds then none of the
critical points of $d_{(1,0)}$ on $\barX$ lies in $X_\infty$.
\end{lemma}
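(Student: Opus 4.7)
The strategy is to argue by contradiction: suppose $p = (0,x) \in X_\infty$ is a critical point of $d_{(1,0)}$ on the affine cone $\barX \subset \C^{n+1}$. Since any critical point is smooth on $\barX$, the first half of (\ref{eq:suffcondition}) gives that $X_\infty$ is smooth at $p$ and that its tangent space (as a cone in $\C^n \cong \{x_0 = 0\}$) is identified with $T_p \barX \cap \{x_0 = 0\}$, of codimension one in $T_p \barX$.

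The key computation is to pair $u - p = (1, -x)$ against two kinds of tangent vectors. First, because $\barX$ is an affine cone, $p = (0,x)$ itself lies in $T_p \barX$, so the critical-point relation $(1,-x) \perp T_p \barX$ forces
\[
0 \,=\, (1,-x) \cdot (0,x) \,=\, -\,x \cdot x,
\]
i.e.\ $x \in Q_\infty$, so $p \in X_\infty \cap Q_\infty$. Second, for an arbitrary tangent vector $(0,v) \in T_p \barX \cap \{x_0 = 0\}$, the same perpendicularity gives $0 = (1,-x)\cdot(0,v) = - x\cdot v$. Translating to $\C^n$, this reads $T_x X_\infty \subseteq x^\perp = T_x Q_\infty$.

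The final step invokes the second half of (\ref{eq:suffcondition}): at the point $x \in X_\infty \cap Q_\infty$, transversality of that intersection (inside $H_\infty \cong \C^n$) means $T_x X_\infty + T_x Q_\infty = \C^n$. But $T_x Q_\infty$ is a hyperplane of $\C^n$ containing $T_x X_\infty$ by the previous step, so the sum is still just $T_x Q_\infty$, contradicting the previous equality.

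The main technical point, rather than a real obstacle, is keeping the bookkeeping straight between the affine cones in $\C^{n+1}$ and $\C^n$ and their corresponding projective varieties; the two transversality hypotheses in (\ref{eq:suffcondition}) are precisely what is needed to make the tangent-space identifications $T_p X_\infty = T_p \barX \cap \{x_0=0\}$ and $T_x Q_\infty = x^\perp$ used above hold without any further qualification.
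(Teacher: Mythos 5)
Your proof is correct and follows essentially the same route as the paper's: pair $(1,-x)$ against the cone direction $(0,x)$ and against vectors in $T_p\barX\cap H_\infty$, use transversality of $\barX\cap H_\infty$ to identify the latter with $T_x X_\infty$, and conclude that $X_\infty$ is tangent to $Q_\infty$ at $x$, contradicting the second transversality hypothesis. You make two small points explicit that the paper leaves implicit — that $x\cdot x=0$ (so $p\in X_\infty\cap Q_\infty$, which is needed before the tangency hypothesis can be invoked) and the final dimension count showing that containment of tangent spaces contradicts transversality — but the underlying argument is the same.
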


\begin{proof}
Arguing by contradiction, suppose that $(0,x_\infty) \in X_\infty$
is a critical point of $d_{(1,0)}$ on $\barX$. Then $(1,-x_\infty)$
is perpendicular to $T_{(0,x_\infty)} \barX$, and hence $(0,x_\infty)$
is perpendicular to $H_\infty \cap T_{(0,x_\infty)} \barX$. By
transversality of $\barX$ and $H_\infty$, the latter is the tangent
space to $X_\infty$ at $(0,x_\infty)$. Hence
$T_{(0,x_\infty)} X_\infty$ is contained in $(0,x_\infty)^\perp$, and
$X_\infty$ is tangent to $Q_\infty$ at $(0,x_\infty)$.
\end{proof}

Fix  $v \in \C^n$ and consider
the affine translate $X_v:=X - v=\{x-v \mid x \in X\}$. Its projective
closure $\barX_v$  is isomorphic to $\barX$ as a projective variety in $\PP^n$.
However, the metric properties of the corresponding cones in $\C^{n+1}$ are
rather different. While $\ED(X_v)=\ED(X)$ holds trivially,
it is possible that   $\ED(\barX_v) \neq \ED(\barX)$.
Here is a simple example:

\begin{example}
Consider the unit circle $X = \{x_1^2+x_2^2=1\}$ in the plane.
Then $\ED(X)=\ED(\barX)=2$.
 For \generic{} $v \in \R^2$, the translated circle
 $X_v$ has $\ED(\barX_v)=4$.
$\diamondsuit$
\end{example}

Affine translation sheds light on the behavior of the
ED degree under homogenization.

\begin{proposition} \label{prop:generictranslates}
Let $X$ be an irreducible variety in $\C^n$, and let $v \in
\C^n$ be a \generic{} vector. Then $\ED(X)
\leq \ED(\barX_v)$, and equality holds if
the hypothesis (\ref{eq:suffcondition}) is satisfied.
\end{proposition}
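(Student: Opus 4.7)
The plan is to apply Lemma \ref{lem:10} together with Lemma \ref{lem:TransverseQinfty} to the translated cone $\barX_v$ with the special data point $u=(1,0)\in\C^{n+1}$, for a generic translation vector $v\in\C^n$. Translation by $v$ is an isometry, so critical points of $d_v$ on $X$ correspond bijectively to critical points of $d_0$ on $X_v$ via $x\mapsto\tilde x=x-v$; in particular $\ED(X_v)=\ED(X)$, and for generic $v$ the data point $0$ is generic for $X_v$, giving exactly $\ED(X)$ critical points, all smooth.

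First I would verify that the hypothesis of Lemma \ref{lem:10} holds for generic $v$, i.e.\ no critical point $\tilde x$ of $d_0$ on $X_v$ satisfies $\tilde x\cdot\tilde x=-1$. The subset of $\mathcal{E}_X$ cut out by $(x-v)\cdot(x-v)+1=0$ is a hypersurface in the $n$-dimensional irreducible variety $\mathcal{E}_X$, so its image under $\pi_2$ is contained in a proper Zariski-closed subset of $\C^n$; a generic $v$ avoids it. Lemma \ref{lem:10} then produces $\ED(X)$ pairwise distinct critical points of $d_{(1,0)}$ on $\barX_v$, all lying strictly in the affine chart $\{x_0\neq 0\}$.

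Next I would derive the inequality $\ED(X)\le\ED(\barX_v)$ by reading these $\ED(X)$ points as points of the scheme-theoretic fiber $\pi_2^{-1}(1,0)$ of the ED correspondence $\mathcal{E}_{\barX_v}\to\C^{n+1}$. For generic $v$ a standard Jacobian calculation shows that the bijection of Lemma \ref{lem:10} is a local isomorphism at each critical point, so these points are reduced points of the fiber. Since $\mathcal{E}_{\barX_v}$ is irreducible of dimension $n+1$ (Theorem \ref{thm:EDaffine}) and $\pi_2$ is generically finite of degree $\ED(\barX_v)$, upper semi-continuity of fiber length forces the generic fiber to contain at least $\ED(X)$ points, yielding $\ED(X)\le\ED(\barX_v)$.

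For the equality clause I would apply Lemma \ref{lem:TransverseQinfty}: under (\ref{eq:suffcondition}) no critical point of $d_{(1,0)}$ on $\barX_v$ lies in $(X_v)_\infty$, so \emph{every} critical point at $(1,0)$ arises from the Lemma \ref{lem:10} bijection. Hence the fiber $\pi_2^{-1}(1,0)$ consists of exactly $\ED(X)$ reduced points, and combining with the inequality above (applied in the reverse direction by semi-continuity) gives $\ED(\barX_v)=\ED(X)$. The main obstacle is the semi-continuity step: one must guarantee that, after the generic choice of $v$, the fiber over $(1,0)$ is actually finite (not positive-dimensional) and reduced at the $\ED(X)$ affine points, so that the count of geometric points transfers cleanly between special and generic fibers. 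If this causes trouble, the cleanest workaround is to replace the data point $(1,0)$ by $(1,\varepsilon w)$ for generic $w\in\C^n$ and small $\varepsilon$, tracking the $\ED(\barX_v)$ critical points along the one-parameter family as $\varepsilon\to 0$: the $\ED(X)$ affine critical points supplied by Lemma \ref{lem:10} appear as limits, and under (\ref{eq:suffcondition}) the remaining $\ED(\barX_v)-\ED(X)$ critical points have nowhere to go except infinity, which Lemma \ref{lem:TransverseQinfty} forbids.
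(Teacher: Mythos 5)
Your proof follows the same route as the paper's: translate by generic $v$, apply Lemma~\ref{lem:10} after checking its hypothesis via the ED correspondence, and invoke Lemma~\ref{lem:TransverseQinfty} for the equality case. The one genuine gap is in your hypothesis check: you assert that $\{(x,v)\in\mathcal{E}_X : (x-v)\cdot(x-v)+1=0\}$ is a hypersurface in $\mathcal{E}_X$, but this presupposes that the function $(x-v)\cdot(x-v)+1$ is not identically zero on the irreducible variety $\mathcal{E}_X$. Without that, the locus could be all of $\mathcal{E}_X$, its image under $\pi_2$ would be all of $\C^n$, and no generic $v$ could avoid it. The paper supplies the missing observation: $(x-v)\cdot(x-v)$ vanishes on the diagonal $\Delta(X)\subset\mathcal{E}_X$, so it cannot be constantly $-1$, and the locus indeed has dimension at most $n-1$.

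With that line inserted, your argument is sound. The remainder (reducedness via a Jacobian computation, invoking semicontinuity of fiber length, and the one-parameter degeneration as a fallback) is more elaborate than what the paper writes, but it tracks the same underlying idea: the affine critical points supplied by Lemma~\ref{lem:10} are isolated points of the fiber over $(1,0)$, which bounds $\ED(\barX_v)$ from below, and under (\ref{eq:suffcondition}) Lemma~\ref{lem:TransverseQinfty} guarantees they account for the entire fiber, giving equality. One phrasing caution: upper semicontinuity of fiber length is a statement on the source $\mathcal{E}_{\barX_v}$, yielding that $\pi_2$ is unramified (hence, via Zariski's main theorem, a local isomorphism) near each reduced isolated point; it is that openness, not the semicontinuity itself, that forces the generic fiber to have at least $\ED(X)$ points.
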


The hypothesis (\ref{eq:suffcondition}) simply says
that $X_\infty$ and $X_\infty \cap Q_\infty$ are smooth.
Note that this
does not depend on  the extension of the quadric $Q_\infty$  to $\C^{n+1}$.

\begin{proof}
Since translation of affine varieties preserves ED degree, the inequality
follows from Lemma~\ref{lem:10} provided  $x'\cdot x' \not= -1$
for all critical points $x'$ for $d_0$ on $X_v$.
These are the points $x'=x-v$ with $x$ critical for $d_v$, i.e., with $(x,v) \in
\cE_X$. The expression $ (x-v) \cdot (x-v)$ is not constant $-1$
on the irreducible variety $\cE_X$, because  it is zero on
the diagonal $\Delta(X) \subset \cE_X$. As a consequence, the variety
of pairs $(x,v) \in \cE_X$ with $(x-v) \cdot (x-v)=-1$ has dimension
$\leq n-1$. In particular, it does not project dominantly onto the second factor
$\C^n$. Taking $v$ outside that projection, and such that the number
of critical points of $d_v$ on $X$ is equal to $\ED(X)$, ensures that
we can apply Lemma~\ref{lem:10}. The second statement follows from
Lemma~\ref{lem:TransverseQinfty} applied to $X_v$ and the fact that $X$
and $X_v$ have the same behavior at infinity.
\end{proof}

Our main result on homogenization links the discussion above to the
polar classes of $\barX$.

\begin{theorem} \label{thm:affineproj}
For any irreducible affine variety $X$ in $\C^n$ we have the two  inequalities
\[ \ED(X) \,\,\leq \,\,\sum_{i=0}^{n-1} \delta_i(\barX)  \quad
\text{ and } \quad
\ED(\barX) \,\,\leq \,\,\sum_{i=0}^{n-1} \delta_i(\barX), \]
with equality on the left if (\ref{eq:suffcondition}) holds, and  equality on the
right if  the conormal variety $\cN_{\barX}$ is disjoint from the diagonal $\Delta(\PP^n)$
in $\PP^n_x \times \PP^n_y$.
The equality on the right holds in particular if
$X\cap Q$ is smooth and disjoint from $X_{sing}$ (see the statement after Theorem \ref{thm:sumpolar}).
\end{theorem}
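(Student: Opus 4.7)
The plan is to establish the two inequalities separately and then upgrade them to equalities under the stated hypotheses. The right-hand inequality is a quantitative weakening of Theorem~\ref{thm:sumpolar} for the projective variety $\barX\subset\PP^n$; the left-hand inequality will follow by chaining that with the affine-translation estimate of Proposition~\ref{prop:generictranslates}.

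First I would prove the right inequality $\ED(\barX)\le\sum_{i=0}^{n-1}\delta_i(\barX)$ by adapting the argument of Theorem~\ref{thm:sumpolar} without assuming $\cN_{\barX}\cap\Delta(\PP^n)=\emptyset$. Proposition~\ref{prop:projjointED} realizes the projective joint ED correspondence $\mathcal{P}\cE_{\barX,\barX^*}$ as an irreducible component of $(\cN_{\barX}\times\C^{n+1})\cap Z$ whose projection to $\C^{n+1}$ has generic fiber of cardinality $\ED(\barX)$. For general $u\in\C^{n+1}$, these $\ED(\barX)$ critical points lie in $\cN_{\barX}\cap Z_u\subset\PP^n\times\PP^n$, and the cohomology product $[\cN_{\barX}]\cdot[Z_u]$ still equals $\sum_i\delta_i(\barX)$ by the same multidegree calculation as in Theorem~\ref{thm:sumpolar}. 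Standard intersection theory bounds the isolated contribution to this intersection number, which is $\ED(\barX)$, by the total $\sum_i\delta_i(\barX)$. Equality is attained when $(\cN_{\barX}\times\C^{n+1})\cap Z=\mathcal{P}\cE_{\barX,\barX^*}$, i.e., when $\cN_{\barX}\cap\Delta(\PP^n)=\emptyset$. The criterion stated after Theorem~\ref{thm:sumpolar}, applied to $\barX\subset\PP^n$, supplies the sufficient geometric condition that $\barX\cap Q$ be smooth and disjoint from $\barX_{\rm sing}$.

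Next I would deduce the left inequality from the right one via a generic translation $X_v$. Proposition~\ref{prop:generictranslates} gives $\ED(X)\le\ED(\overline{X_v})$ for generic $v\in\C^n$. The affine translation by $v$ extends to a projective linear automorphism of $\PP^n$ that carries $\barX$ isomorphically to $\overline{X_v}$; since polar classes are projective invariants of the embedded variety, $\delta_i(\overline{X_v})=\delta_i(\barX)$ for every $i$. Applying the right inequality to $\overline{X_v}$ gives $\ED(\overline{X_v})\le\sum_i\delta_i(\overline{X_v})=\sum_i\delta_i(\barX)$, and chaining the two estimates produces $\ED(X)\le\sum_i\delta_i(\barX)$.

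The main obstacle will be to upgrade the left inequality to an equality under~(\ref{eq:suffcondition}). The equality $\ED(X)=\ED(\overline{X_v})$ in Proposition~\ref{prop:generictranslates} already holds in this case, since~(\ref{eq:suffcondition}) is a condition on $X_\infty=\overline{X_v}\cap H_\infty$ and $X_\infty\cap Q_\infty$ and is therefore translation-invariant. The remaining task is to show that for generic $v$ the right inequality is also an equality for $\overline{X_v}$, i.e., that $\cN_{\overline{X_v}}\cap\Delta(\PP^n)=\emptyset$. By the criterion after Theorem~\ref{thm:sumpolar}, it suffices to check that $\overline{X_v}$ is everywhere transverse to $Q\subset\PP^n$ at smooth points. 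At infinity this is exactly~(\ref{eq:suffcondition}); in the affine chart $\{x_0=1\}$, a tangency of $\overline{X_v}$ with $Q$ at the image of a smooth $x\in X$ translates into the pair of conditions $\|x-v\|^2=-1$ and $x-v\in(T_xX)^\perp$. I would rule these out for generic $v$ by a Bertini-style dimension count: the set of pairs $(x,v)\in(X\setminus\Xsing)\times\C^n$ satisfying $x-v\in(T_xX)^\perp$ is fibered over $X\setminus\Xsing$ with affine fibers of dimension $\codim X$, hence has total dimension $n$; imposing the extra equation $\|x-v\|^2=-1$ cuts it to dimension $n-1$; and the projection to $\C^n$ is therefore contained in a proper closed subset. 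For $v$ outside that subset $\cN_{\overline{X_v}}$ avoids the diagonal, giving $\ED(\overline{X_v})=\sum_i\delta_i(\overline{X_v})=\sum_i\delta_i(\barX)$ and hence the desired equality on the left.
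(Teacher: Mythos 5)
Your route differs from the paper's in an interesting way on the right-hand inequality: you bound $\ED(\barX)$ directly by an excess-intersection argument on $\cN_{\barX}\cap Z_u$, whereas the paper never intersects a conormal variety that may meet the diagonal. Instead it proves, via the pair of matrices $(A_v,A_v^{-T})$ and a dimension count on the incidence variety $\{(x,y,v)\in\cN_{\barX}\times\C^n : A_vx=A_v^{-T}y\}$, that a \generic{} translate satisfies $\cN_{\barX_v}\cap\Delta(\PP^n)=\emptyset$, applies Theorem~\ref{thm:sumpolar} to $\barX_v$, and then uses projective invariance of the polar classes together with semicontinuity of the number of critical points as $v\to 0$. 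Your direct argument can be made to work, but the sentence ``standard intersection theory bounds the isolated contribution'' hides real content: when $\cN_{\barX}$ meets the diagonal, all of $\cN_{\barX}\cap\Delta$ lies in $Z_u$ for every $u$, so the intersection is improper, and one needs (i) that for \generic{} $u$ the $\ED(\barX)$ critical points give isolated points of $\cN_{\barX}\cap Z_u$ lying off the diagonal, and (ii) positivity of the excess contributions, which holds because $\PP^n\times\PP^n$ is a homogeneous variety (Fulton, Ch.~12), but is not a formality; the degenerate cases in which Theorem~\ref{thm:EDproj} and Proposition~\ref{prop:projjointED} do not apply (e.g.\ $\barX\subseteq Q$, where $\ED(\barX)=0$) should also be dispatched explicitly.

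The genuine gap is in your proof of the left equality under (\ref{eq:suffcondition}). You need $\cN_{\barX_v}\cap\Delta(\PP^n)=\emptyset$ for \generic{} $v$, and you propose to get it from the sufficient criterion after Theorem~\ref{thm:sumpolar} by checking transversality of $\barX_v$ with $Q$ ``at smooth points''. But that criterion has a second clause: the intersection with $Q$ must also be disjoint from the singular locus. Diagonal points of $\cN_{\barX_v}$ can sit over singular points of $\barX_v$ (they arise as limits of conormal directions at nearby smooth points, where the limiting tangent-space argument breaks down), and neither your tangency computation in the affine chart nor your Bertini-style dimension count says anything about them; the theorem allows an arbitrary irreducible affine $X$, so $\Xsing$ may well meet the relevant quadric. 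The omission is repairable: (\ref{eq:suffcondition}) forces $X_\infty$ to be smooth, hence $\barX$ is regular along $H_\infty$, hence $(\barX)_{\sing}$ is a projective set contained in the affine chart and therefore finite, and a \generic{} translation moves these finitely many points off $Q$ --- but none of this is in your write-up, and without it the appeal to the criterion is incomplete. The paper avoids the issue altogether: its incidence-variety argument works with $\cN_{\barX}$ itself, which already encodes the limiting conormal directions over $\Xsing$, and so needs no smoothness hypothesis. Either supply the finiteness-of-$\Xsing$ argument (plus the small verification that (\ref{eq:suffcondition}) really does give transversality of $\barX_v$ and $Q$ at the points at infinity), or replace this step by the paper's dimension count.
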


\begin{proof}
We claim that for \generic{} $v \in \C^n$ the conormal variety
$\cN_{\barX_v}$ does not intersect $\Delta(\PP^n)$. For this we need to
understand how $\cN_{\barX_v}$ changes with $v$. The $(1+n)
\times (1+n)$ matrix
\[ A_v:=\begin{pmatrix} 1 & 0 \\ -v & I_n \end{pmatrix} \]
defines an automorphism $\PP^n_x \to \PP^n_x$ that maps $\barX$
isomorphically onto $\barX_v$. The second factor $\PP^n_y$
is the dual of $\PP^n_x$ and hence
transforms contragradiently, i.e., by the matrix $A_v^{-T}$. Hence the
pair of matrices $(A_v,A_v^{-T})$ maps $\cN_{\barX}$ isomorphically onto
$\cN_{\barX_v}$. Consider the variety
\[ Z\,:=\, \bigl\{\,(x,y,v) \in \cN_{\barX} \times \C^n \mid A_v x = A_v^{-T} y \bigr\}. \]
For fixed $(x,y)=((x_0:x_\infty),(y_0:y_\infty)) \in \cN_{\barX}$ with
$x_0 \neq 0$, the equations defining $Z$ read
\[ x_0 = c (y_0+v^T y_\infty) \quad \text{ and } \quad -x_0 v+x_\infty=c y_\infty,
\]
for $v \in \C^n$ and a scalar $c$ reflecting that we work in projective
space.  The second equation expresses $v$ in $c,x,y$.
Substituting that expression into the first equation gives a system
for $c$ with at most $2$ solutions. This shows that
 $\dim Z$ is at most $\dim \cN_{\barX}=n-1$, so the
image of $Z$ in $\C^n$ is contained in a proper subvariety of $\C^n$. For
any $v$ outside that subvariety, $\cN_{\barX_v} \cap \Delta(\PP^n) = \emptyset $.
For those $v$,
Theorem~\ref{thm:sumpolar} implies that
$\ED(\barX_v)$ is the sum of the polar classes of $\barX_v$,
which are also those of $\barX$ since they are projective invariants.
Since  $\ED(\barX_v)$ can only go down as
$v$ approaches a limit point, this yields the second inequality,
as well as the sufficient condition for equality there. By applying
Proposition~\ref{prop:generictranslates}, we establish the first
inequality, as well as the sufficient condition (\ref{eq:suffcondition}) for equality.
\end{proof}

\begin{example}
Consider the quadric surface $\overline Y = V(x_0 x_3-x_1 x_2) \subset
\PP^3$ from Example \ref{ex:upordown}. This is the toric
variety whose polytope $P$ is the unit square.
By Corollary~\ref{cor:toricnice}, the sum of the polar classes
equals $7 V_2 - 3 V_1 + V_0 = 14-12+4=6$. Comparing this with
(\ref{eq:otherdirection}),
we find that neither of the two inequalities
in Theorem~\ref{thm:affineproj} is an equality. This is consistent with
the fact that $Y_\infty:=\overline{Y} \cap H_\infty = V(x_1 x_2)$ is not
smooth at the point $(0:0:0:1)$ and the fact that $\overline{Y}$ and
$Q$ are tangent at the four points $(1:a_1:a_2:a_1 a_2)$ with $a_1,a_2 = \pm i$.
\hfill $\diamondsuit$
\end{example}

\begin{example}
Consider the threefold $\,\overline Z = V(x_1 x_4 - x_2
x_3 - x_0^2 - x_0x_1)\,$ in $\PP^4$. Then $Z_\infty$ is isomorphic to
$\overline{Y}$ from the previous example and smooth in $\PP^3$, but
$Z_\infty \cap Q_\infty$ is isomorphic to the $\overline{Y} \cap Q$
from the previous example and hence has four non-reduced points.  Here, we have
$\,\ED(Z) = 4 < 8 = \ED(\overline Z) = \sum_{i=0}^3 \delta_i(\overline Z)$.
If we replace $x_1 x_4$ by $2 x_1 x_4$ in the equation
defining $\overline Z$, then the four non-reduced points
disappear. Now $Z_\infty \cap Q_\infty$ is smooth, we have
$\ED(Z) = 8$, and both inequalities in Theorem~\ref{thm:affineproj}
hold with equality.
\hfill $\diamondsuit$
\end{example}

\begin{example}
Let $X$ be the cardioid
from Examples \ref{ex:cardioid} and  \ref{ex:upordown}.
This curve violates both  conditions for equality in
Theorem~\ref{thm:affineproj}. Here
$X_\infty = V(x_1^4+2x_1^2x_2^2+x_2^4) $ agrees with  $Q_\infty =
V(x_1^2+x_2^2)$ as a subset of $  H_\infty \simeq \PP^1$, but it has
multiplicity two at the two points.
\hfill $\diamondsuit$
\end{example}

\section{ED discriminant and Chern Classes}
\label{sec:Chern}

Catenese and Trifogli \cite{CT, Tri} studied ED discriminants
under their classical name {\em focal loci}. We present
some of their results, including a formula for the ED degree
in terms of Chern classes, and we discuss a range of applications.
We work in the projective setting, so  $X$ is
a subvariety of $ \PP^{n-1}$, equipped with homogeneous coordinates
$(x_1:\ldots:x_n)$ and $\sP\mathcal{E}_X
\subset \PP^{n-1}_x \times \C^{n}_u$ is its
projective ED correspondence.
By Theorem \ref{thm:EDproj}, the ED degree
is the size of the general fiber of the map
$\sP\mathcal{E}_X \rightarrow \C^{n}_u$.
The branch locus
  of this map is the closure of
 the set of data points $u$ for which there are fewer
 than $\ED(X)$ complex critical points.
Since the variety $\sP\mathcal{E}_X
\subset \PP^{n-1}_x \times \C^{n}_u$ is defined by bihomogeneous equations
in $x$, $u$, also the branch locus is defined by homogeneous equations
and it is a cone in $\C^{n}_u$.
Hence the branch locus defines a projective variety $\Sigma_X\subset\PP^{n-1}_u$,
which we call the {\em ED discriminant}.
The ED discriminant $\Sigma_X$    is typically
a hypersurface,
by the Nagata-Zariski Purity Theorem,
and we are interested
in its degree and defining polynomial.

\begin{remark} \rm
In applications, the {\em uniqueness} of the closest real-valued point
$u^* \in X$ to a given data point $u$ is relevant. In many cases,
e.g. for symmetric tensors of rank one \cite{Friedland}, this closest
point is unique for $u$ outside an algebraic hypersurface
that strictly contains $\Sigma_X$.
\end{remark}

\begin{example} \label{ex:quadric2}
Let $n=4$ and consider the
quadric surface $X = V(x_1x_4- 2 x_2 x_3) \subset \PP^3_x$. This
is the $2 \times 2$-determinant in general coordinates, so
$\ED(X) = 6$. The ED discriminant
 is a irreducible surface of degree $12$
in $\PP^3_u$.
Its defining polynomial  has $119$ terms:
$$ \begin{matrix}
 \Sigma_X\, = \,
65536u_1^{12}+835584 u_1^{10} u_2^2+835584 u_1^{10} u_3^2
-835584 u_1^{10} u_4^2
+9707520 u_1^9 u_2 u_3 u_4 \\
+3747840 u_1^8 u_2^4
-7294464 u_1^8 u_2^2 u_3^2 + \,\cdots \,
+835584 u_3^2 u_4^{10}+65536 u_4^{12}. \,\,\,
\end{matrix}
$$
This ED discriminant can be computed using the following {\tt Macaulay2} code:
\begin{verbatim}
R = QQ[x1,x2,x3,x4,u1,u2,u3,u4];  f = x1*x4-2*x2*x3;
EX = ideal(f) + minors(3,matrix {{u1,u2,u3,u4},{x1,x2,x3,x4},
 {diff(x1,f),diff(x2,f),diff(x3,f),diff(x4,f)} });
g = first first entries gens eliminate({x3,x4},EX);
toString factor discriminant(g,x2)
\end{verbatim}
Here {\tt EX} is the ideal of the ED correspondence in $\PP^3_x \times \PP^3_u$.
The command {\tt eliminate} maps that threefold into
$\PP^1_{(x_1:x_2)} \times \PP^3_u$. We print
the discriminant of that hypersurface over $\PP^3$.
\hfill $\diamondsuit$
\end{example}

If $X$ is a general hypersurface of degree $d$ in $\PP^{n-1}$ then,
by Corollary \ref{prop:implgeneric2},
\begin{equation}
\label{eq:EDhypersurface}
\ED(X) \,\,= \,\, d \cdot  \frac{(d-1)^{n-1}-1}{d-2} .
\end{equation}
Trifogli \cite{Tri} determined the degree of the
ED discriminant $\Sigma_X$ for such a
hypersurface $X$:

\begin{theorem}[Trifogli]
If $X$ is a general hypersurface of degree $d$ in $\PP^{n-1}$ then
\begin{equation}
\label{eq:EDtrifogli}
 {\rm degree}(\Sigma_X) \,\, = \,\, d(n-2)(d-1)^{n-2} \,+\, 2d(d-1) \frac{(d-1)^{n-2}-1}{d-2} .
 \end{equation}
\end{theorem}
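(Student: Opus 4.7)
The plan is to realize the projective ED correspondence $\mathcal{P}\cE_X$ as a smooth $\PP^1$-bundle over $X$ and to compute $\deg \Sigma_X$ as the degree of the branch divisor of its projection to $\PP^{n-1}_u$ via a Riemann--Hurwitz-type ramification formula. For a general smooth hypersurface $X = V(f) \subset \PP^{n-1}$ of degree $d$, the fiber of the projection $\mathcal{P}\cE_X \to X$ over a point $[x]$ is the projective line $\PP(\mathrm{span}(x, \nabla f(x))) \subset \PP^{n-1}_u$. Since $X$ is smooth and (for general $X$) transversal to the isotropic quadric $Q$, Euler's identity $x \cdot \nabla f(x) = d f(x) = 0$ on $X$ shows that $x$ and $\nabla f(x)$ are linearly independent at every point of $X$, so these fibers assemble into a rank-$2$ vector subbundle $S \subset \C^n \otimes \sO_X$ and $Y := \mathcal{P}\cE_X = \PP(S)$.

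The bundle $S$ is the image of the injective map $\sO_X(-1) \oplus \sO_X(1-d) \to \C^n \otimes \sO_X$ sending $(\alpha, \beta)$ to $\alpha x + \beta \nabla f(x)$ (the twist by $1-d$ accounts for the rescaling of $\nabla f$ under $x \mapsto \lambda x$). Hence $c(S) = (1-H)(1-(d-1)H)$, giving $c_1(S) = -dH$ and $c_2(S) = (d-1)H^2$, where $H$ is the hyperplane class on $X$. A partial-fraction expansion of $s(S) = 1/c(S)$ yields
\[ s_k(S) \,\,=\,\, \frac{(d-1)^{k+1} - 1}{d-2}\, H^k. \]
Writing $H_u$ for the hyperplane class from $\PP^{n-1}_u$ (which restricts to $c_1(\sO_{\PP(S)}(1))$ on $Y$), one verifies as a sanity check that $\ED(X) = \int_Y H_u^{n-1} = d \cdot s_{n-2}(S)$, recovering formula~(\ref{eq:EDhypersurface}). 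The canonical bundle formula for the projectivization of a rank-$2$ bundle, combined with $K_X = (d-n)H$, gives
\[ K_Y \,\,=\,\, \pi^* K_X - 2 H_u - \pi^* c_1(S) \,\,=\,\, (2d-n) H - 2 H_u. \]

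The projection $\pi_2: Y \to \PP^{n-1}_u$ is finite of degree $\ED(X)$, and its ramification divisor is $R = K_Y - \pi_2^* K_{\PP^{n-1}_u} = (2d-n) H + (n-2) H_u$. For a general hypersurface, this ramified locus maps birationally onto $\Sigma_X$, so $\deg \Sigma_X = R \cdot H_u^{n-2}$ on $Y$. Pushing forward along $\pi: Y \to X$ using the Grothendieck relation $H_u^2 + c_1(S) H_u + c_2(S) = 0$ together with $\int_X H^{n-2} = d$, we obtain
\[ \deg \Sigma_X \,\,=\,\, (2d-n)\, d\, \frac{(d-1)^{n-2} - 1}{d-2} \,+\, (n-2)\, d\, \frac{(d-1)^{n-1} - 1}{d-2}, \]
and the elementary identity $(2d - n) + (n-2)(d-1) = n(d-2) + 2$, applied after factoring $(d-1)^{n-2}$ from the combined expression, reshapes this into the stated form~(\ref{eq:EDtrifogli}). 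The main obstacle is verifying the geometric hypotheses that legitimize this Chern-class calculation: that $Y$ is smooth, that $\pi_2$ is finite and generically \'etale, and that the scheme-theoretic pushforward $\pi_{2*} R$ equals $[\Sigma_X]$ with multiplicity one. Each of these holds for a general degree-$d$ hypersurface transversal to $Q$, but a careful justification requires analyzing the behavior of $Y$ over $X \cap Q$ along the lines of the transversality discussion following Theorem~\ref{thm:sumpolar}.
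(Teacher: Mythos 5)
The paper itself does not prove this theorem --- it cites Trifogli \cite{Tri} and, at the end of Section~\ref{sec:Chern}, refers to the general Catanese--Trifogli formula in \cite{CT} as ``a complicated expression in terms of the Chow ring of the ED correspondence.'' Your argument is therefore an independent derivation, and it is correct. Realizing $\mathcal{P}\cE_X$ as $\PP(S)$ with $S \cong \sO_X(-1)\oplus\sO_X(1-d)$ (which requires exactly the transversality of $X$ and $Q$, via Euler's identity, as you note) is the right splitting; I checked that your Grothendieck relation, Segre class expansion, canonical bundle formula $K_Y=(2d-n)H-2H_u$, and pushforwards are all consistent, that the intermediate identity $\int_Y H_u^{n-1}=\ED(X)$ recovers~(\ref{eq:EDhypersurface}), and that your final expression $(2d-n)d\frac{(d-1)^{n-2}-1}{d-2}+(n-2)d\frac{(d-1)^{n-1}-1}{d-2}$ algebraically equals Trifogli's~(\ref{eq:EDtrifogli}). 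This is the same Chern-class formalism the paper sketches, specialized to the hypersurface case where $S$ splits and the computation becomes elementary.

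The genuine gap is exactly the one you flag in your closing paragraph, and it should not be dismissed as routine: the identity $\deg\Sigma_X = R\cdot H_u^{n-2}$ is not just the Riemann--Hurwitz formula but also the assertion that $\pi_{2*}R = [\Sigma_X]$ as cycles \emph{with multiplicity one}. This requires (i) $\pi_2$ generically finite with ramification in codimension one (so that the cycle $K_Y-\pi_2^*K_{\PP^{n-1}}$ is effective and equals the ramification divisor), (ii) $\Sigma_X$ irreducible, and (iii) the generic ramification over $\Sigma_X$ being a single simple ($A_1$) collision rather than, say, two simultaneous simple collisions or one higher-order tangency, which would introduce an overall factor in $\pi_{2*}R$. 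Without verifying (iii) in particular, your computation gives the degree of the branch \emph{cycle}, which is only an upper bound for $\deg\Sigma_X$. For a general degree-$d$ hypersurface these genericity claims do hold, but proving them requires either a degeneration/semicontinuity argument or a direct local analysis of the ED discriminant near a generic branch point, along the lines of the transversality discussion after Theorem~\ref{thm:sumpolar}.
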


\begin{example} \label{ex:trifogliEX}
A general plane curve $X$ has
$\ED(X) = d^2$ and ${\rm degree}(\Sigma_X) = 3 d(d-1)$.
These are the numbers seen for
the ellipse $(d=2)$ in Example \ref{ex:ellipse2}.
For a plane quartic $X$, we expect
$\ED(X) = 16$ and ${\rm degree}(\Sigma_X)= 36$,
in contrast to the numbers $3$ and $4$
for the cardioid in Example \ref{ex:cardioid}.
A general surface in $\PP^3$ has
$\ED(X) = d(d^2-d+1)$
and ${\rm degree}(\Sigma_X) =  2d(d-1)(2d-1)$.
For quadrics $(d=2)$ we get
$6$ and $12$, as in Example \ref{ex:quadric2}.~$\diamondsuit$
\end{example}

\begin{example}
The ED discriminant $\Sigma_X$  of a plane curve $X$
was already studied in the 19th century under the name {\it evolute}.
Salmon  \cite[page 96, art.~112]{Sal} showed that a curve $X \subset
\PP^2$ of degree $d$ with with $\delta$ ordinary nodes and $k$ ordinary
cusps has ${\rm degree}(\Sigma_X) = 3 d^2-3d-6\delta-8k$.  For affine
$X \subset \C^2$, the same holds provided that $\overline{X} \subseteq
\PP^2$ is not tangent to the line $H_\infty$ and neither of the two
isotropic points on $H_\infty$ is on $\overline{X}$. Curves with more
general singularities are considered in \cite{Cat, JP} in the context of {\em
caustics}, which are closely related to evolutes.
\hfill $\diamondsuit$
\end{example}

\begin{example}
\label{ex:EYposi2}
Let $X_r$ be the determinantal variety of Examples \ref{ex:eckartyoung}, \ref{ex:SVDrevisited}
and \ref{ex:EYposi1}.  The ED discriminant $\Sigma_{X_r}$ does not depend on $r$ and
equals the discriminant of the characteristic polynomial of the symmetric matrix $UU^t$.
This polynomial has been expressed as a sum of squares in \cite{Ily}.
The set of real points in the hypersurface $V(\Sigma_{Xr})$  has codimension two
in the space of real $s\times t$ matrices; see \cite[\S 7.5]{Stu}.
This explains why the complement of  this ED discriminant in the space of real matrices is connected. In particular, if $U$ is real then all critical points are real, hence $\aED(X_r)={s\choose r}$.
A computation reveals that
 $\Sigma_{X_r}$ is reducible for $s = 2$. It has two components
if $t \geq 3$, and it has four components if $t=2$.
\hfill $\diamondsuit$
\end{example}

We  comment on the relation between duality and the
ED discriminant $\Sigma_X$. Recall that $\Sigma_X$ is the projectivization of the branch locus of the
covering $\sP \mathcal{E}_X \rightarrow \C^n_u$. By the
results in Section \ref{sec:Duality}, this is also the
branch locus of $\sP \mathcal{E}_{X,Y} \rightarrow \C^n_u$,
and hence also of $\sP \mathcal{E}_Y \rightarrow \C^n_u$. This implies
that the ED discriminant of a variety $X$ agrees with that of its dual
variety $Y=X^*$.

\begin{example}
Let $X \subset \PP^2_x$ denote the cubic Fermat curve given by
$x_0^3+x_1^3+x_2^3 = 0 $. Its dual $Y$ is the sextic curve in
$\PP^2_y$ that is defined by $\, y_0^6 +y_1^6 + y_2^6 -2 y_0^3
y_1^3 -2 y_0^3 y_2^3 -2 y_1^3 y_2^3 $. This pair of curves
satisfies ${\rm EDdegree}(X) = {\rm EDdegree}(Y) = 9$. The ED
discriminant $\Sigma_X = \Sigma_Y$ is an irreducible curve of
degree $18$ in $\PP^2_u$. Its defining polynomial has $184$
terms:
$$
\Sigma_X = 4 u_0^{18}-204 u_0^{16} u_1^2+588 u_0^{15} u_1^3
-495 u_0^{14} u_1^4+2040 u_0^{13} u_1^5 -2254 u_0^{12}
u_1^6+2622 u_0^{11} u_1^7 + \cdots +  4 u_2^{18}.
$$
The computation  of the ED discriminant  for
larger examples  in {\tt Macaulay2} is difficult.
 \hfill $\diamondsuit$
\end{example}

The formulas \eqref{eq:EDhypersurface} and \eqref{eq:EDtrifogli} are best
understood and derived using
modern intersection theory; see \cite{Fulton} or  \cite[Appendix A]{Har}.
That theory goes far beyond the
techniques from \cite{CLO} used in the earlier sections
but is indispensable for more general formulas,
especially for varieties $X$ of codimension $\geq 2$. We
briefly sketch some of the required vector bundle techniques.

A vector bundle $\cE \to X$ on a smooth, $m$-dimensional projective
variety $X$ has a {\em total Chern class} $c(\cE)=c_0(\cE)
+ \ldots + c_m(\cE)$, which resides in the cohomology ring
$H^*(X)=\bigoplus_{i=0}^{m} H^{2i}(X)$. In particular, the {\em top
Chern class} $c_m(\cE)$ is an integer scalar multiple of the class of
a point, and that integer is commonly denoted $\int c(\cE)$.  If $\cE$
has rank equal to $\dim X=m$, and if $s:X \to \cE$ is a global section
for which $V(s):=\{x \in X \mid s(x)=0\}$ consists of finitely many
simple points, then the cardinality of $V(s)$ equals $\int c(\cE)$.
To apply this to the computation of ED degrees, we shall find $\cE$ and
$s$ such that the variety $V(s)$ is the set of critical points of $d_u$,
and then compute $\int c(\cE)$ using vector bundle tools. Among these
tools are {\em Whitney's sum formula} $c(\cE)=c(\cE') \cdot c(\cE'')$
for any exact sequence $0 \to \cE' \to \cE \to \cE'' \to 0$ of vector
bundles on $X$, and the fact that the total Chern class of the pull-back
of $\cE$ under a morphism $X' \to X$ is the image of $c(\cE)$ under the
ring homomorphism $H^*(X) \to H^*(X')$.

Here is our repertoire of  vector bundles on $X$:
the trivial bundle $X \times \C^n$ of rank $n$; the {\em
tautological line bundle} pulled back from $\PP^{n-1}$, which is $\cR_X := \{(x,v) \in
X \times \C^{n} \mid v \in x\}$
(also often denoted by $\sO_X(-1)$,
while the dual $\cR_X^*$ is denoted by $\sO_X(1)$);
the {\em tangent bundle} $TX$ whose fibers are the tangent spaces
$T_xX$; the {\em cotangent bundle} $T^*X$ whose fibers are
their duals $(T_xX)^*$; and the {\em normal bundle } $N_X$ whose fibers are the
quotient $T_x\PP^n/T_xX$. From these building blocks, we can construct new
vector bundles using direct sums, tensor products, quotients, duals,
and orthogonal complements inside the trivial bundle $X \times \C^n$.

\begin{theorem}\label{edformula} {\rm (Catanese-Trifogli)}
Let $X$ be an irreducible smooth subvariety of $\PP^{n-1}$
and assume that $X$
intersects the isotropic quadric $Q= V(x_1^2+\cdots+x_{n}^2)$
transversally, i.e.~$X \cap Q$ is smooth.
Then the EDdegree of $X$ can be computed
in $H^*(X)$ by either of the expressions
\begin{equation}
\label{eq:CTformula}
 {\rm EDdegree}(X) \,\,\, = \,\,\,
\int \frac{c(\cR^*_X) \cdot c(T^*X \otimes \cR^*_X)}{c(\cR_X)}
\quad = \quad
\int\frac{1}{c(\cR_X) \cdot c(N^*_X\otimes \cR^*_X)}.
\end{equation}
\end{theorem}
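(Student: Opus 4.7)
The plan is to realize $\ED(X)$ as the number of zeros of a generic section of a rank-$m$ vector bundle $\cE$ on $X$, and then to compute its top Chern class using the standard exact sequences for smooth projective subvarieties.

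On $X$, let $\widetilde T_X \subset \underline{\C^n}_X$ denote the rank-$(m+1)$ affine tangent bundle of the cone over $X$, whose fiber at $[x]$ is $T_x(CX) = \C x + T_x^{\mathrm{aff}} X$. Using the Euler sequence restricted to $X$ one obtains the short exact sequence
\begin{equation*}
0 \to \cR_X \to \widetilde T_X \to TX \otimes \cR_X \to 0.
\end{equation*}
The standard bilinear form on $\C^n$ induces a surjective adjunction map $\underline{\C^n}_X \to \widetilde T_X^*$, $u \mapsto (v \mapsto u \cdot v)$, whose kernel $\widetilde N_X$ is a subbundle of rank $c=n-m-1$ satisfying $\widetilde N_X \cong (\underline{\C^n}_X/\widetilde T_X)^*$. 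The transversality hypothesis $X \pitchfork Q$ forces $\cR_X \cap \widetilde N_X = 0$ at every point of $X$ (see the final paragraph), so
\begin{equation*}
\cE \,:=\, \underline{\C^n}_X / (\cR_X \oplus \widetilde N_X)
\end{equation*}
is a locally free sheaf of rank $m$ on $X$.

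For a data point $u \in \C^n$, the constant section of $\underline{\C^n}_X$ descends to a section $s_u$ of $\cE$ whose zero locus consists of those $[x]$ with $u \in \cR_X|_{[x]} + \widetilde N_X|_{[x]}$, equivalently $u-\lambda x \perp \widetilde T_X|_{[x]}$ for some $\lambda \in \C$ — precisely the projective critical points counted in Theorem \ref{thm:EDproj}. For generic $u$ that theorem guarantees that the zeros are $\ED(X)$ distinct simple points, so $\ED(X)=\int c_m(\cE)$. Applying Whitney's formula to $0 \to \cR_X \oplus \widetilde N_X \to \underline{\C^n}_X \to \cE \to 0$ gives $c(\cE) = (c(\cR_X) c(\widetilde N_X))^{-1}$; dualizing the defining sequence of $\widetilde N_X$ yields $c(\widetilde N_X) = 1/c(\widetilde T_X^*)$; and dualizing the affine-tangent sequence yields $c(\widetilde T_X^*) = c(\cR_X^*) \cdot c(T^*X \otimes \cR_X^*)$. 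Chaining these identities produces the first formula. The second formula then follows purely formally: tensor the cotangent Euler sequence by $\sO(1)$ to get $c(T^*\PP^{n-1} \otimes \sO(1)) = 1/c(\sO(1))$, restrict to $X$ and use the conormal sequence twisted by $\cR_X^*$ to write $c(T^*X \otimes \cR_X^*) = 1/(c(\cR_X^*) c(N_X^* \otimes \cR_X^*))$; substitution and cancellation convert the first expression into the second.

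The main obstacle I expect is the verification, under the transversality hypothesis alone, that $\cR_X \cap \widetilde N_X = 0$ everywhere — in particular at points of $X \cap Q$, where $x$ is isotropic. A nonzero element of $(\cR_X \cap \widetilde N_X)|_{[x]}$ forces $x \in \widetilde T_X^\perp$, i.e.\ $\widetilde T_X|_{[x]} \subset x^\perp = T_x^{\mathrm{aff}} Q$, which is exactly the failure of transversality of $X$ and $Q$ at $[x]$; hence the hypothesis excludes this, and $\cR_X \oplus \widetilde N_X$ has constant rank $n-m$ so that $\cE$ is locally free. A secondary routine check is simplicity of the zeros of $s_u$ for generic $u$, which reduces to the reducedness of the generic fiber of the projective ED correspondence in Theorem \ref{thm:EDproj}; both steps are needed to identify the scheme-theoretic count of zeros of $s_u$ with the integral $\int c_m(\cE)$.
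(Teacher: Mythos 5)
Your proposal is correct and follows essentially the same route as the paper's first proof: you exhibit the critical points as the zero scheme of the section $s_u$ of the quotient of the trivial bundle by the Euclidean normal bundle $\cR_X \oplus \widetilde N_X$ (whose total space is the projective ED correspondence $\sP\cE_X$ of Theorem \ref{thm:EDproj}), apply Whitney's sum formula, and convert the first expression into the second via the twisted Euler and conormal sequences, exactly as in the paper's identity (\ref{eq:usingtheidentity}). The only difference is that you derive $c(\cR_X\oplus\widetilde N_X)$ directly from the affine tangent and orthogonal-complement sequences and verify local freeness from transversality with $Q$, steps the paper outsources to Catanese--Trifogli (it also records a second, independent proof via Holme's polar-class formula, which you do not use).
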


\begin{proof}
The first expression is  stated
after Remark 3 on page 6026 in \cite{CT}, as a formula for
the {\em inverse} of the total Chern
class of what they call {\em Euclidean normal bundle} (for simplicity we tensor it by $\cR_X^*$, differently from \cite{CT}).  The total space of that
bundle,  called {\em normal variety} in \cite{CT, Tri}, is precisely our
projective ED correspondence $\sP \mathcal{E}_X$ from
Theorem~\ref{thm:EDproj}.

A \generic{} data point $u \in \C^n$ gives rise to a section $x \mapsto [(x,u)]$
of the quotient bundle $(X \times \C^n) / \sP \mathcal{E}_X$, whose zero
set is exactly the set of critical points of $d_u$.  By Whitney's sum
formula, the total Chern class of this quotient is $1/c(\sP \cE_X)$. This
explains the inverse and the first formula. The second formula is seen using the identity
 \begin{equation}
 \label{eq:usingtheidentity}
 \frac{1}{c(N^*_X\otimes\cR_X^*)}\,=\,\frac{c(T^* X \otimes\cR_X^*)}{c(T^*\PP^{n-1}\otimes\cR_X^*)}\,=\,c(\cR_X^*)\cdot c(T^* X \otimes\cR_X^*),
 \end{equation}
where the second equality follows from the Euler sequence
\cite[Example II.8.20.1]{Har}. \end{proof}

\begin{remark} \rm
The ED degree of a smooth projective variety $X$
can also be interpreted as the top Segre class \cite{Fulton} of the Euclidean normal bundle of $X$.
\end{remark}

We shall now relate this discussion to the earlier formula in
Section \ref{sec:Duality}, by offering a second proof of Theorem
\ref{edformula}. This proof is based on a Chern class computation and
Theorem~\ref{th:chernpower2}, and hence independent of the proof by Catanese
and Trifogli.

\begin{proof}[Second proof of Theorem~\ref{edformula}.]
If $\cE$ is a vector bundle of rank $m$ and $\sL$ is a line bundle then
\begin{equation}\label{eq:tensorchern}
c_k(\cE\otimes
\sL)\,\,=\,\,\sum_{i=0}^k{{r-i}\choose{k-i}}c_i(\cE)c_1(\sL)^{k-i}.
\end{equation}
This formula is \cite[Example 3.2.2]{Fulton}.
By definition, we have $c_i(X) = (-1)^i c_i(T^*X)$.
Setting $c_1(\cR^*_X)=h$, the formula (\ref{eq:tensorchern}) implies
$$c(T^*X \otimes \cR^*_X) \,= \, \sum_{k=0}^m\sum_{i=0}^k
{{m-i}\choose{k-i}}(-1)^ic_i(X)h^{k-i}
\, = \,\sum_{i=0}^m(-1)^ic_i(X)\sum_{t=0}^{m-i} {{m-i}\choose t}h^t. $$
We have  $c(\cR^*_X)=1+h$ and
$1/c(\cR_X)=1/(1-h) = \sum_{i=0}^mh^i$.
The equation above implies
$$\frac{c(\cR^*_X) \cdot c(T^*X \otimes \cR^*_X)}{c(\cR_X)} \,\,=\,\,
\sum_{i=0}^m(-1)^ic_i(X)\left(\sum_{t=0}^{m-i}
{{m-i}\choose t}h^t\right)\left(1+2\sum_{j=1}^mh^j\right). $$
The integral on the left hand side in  (\ref{eq:CTformula})
is the coefficient of $h^{m-i}$ in the polynomial in $h$
that is obtained by multiplying
the two parenthesized sums.
That coefficient equals
$$ 1+2\sum_{j=0}^{m-i-1}{{m-i}\choose j} \,\, = \,\,\, 2^{m-i+1}-1.$$
We conclude that Theorem \ref{th:chernpower2} is in fact
equivalent to the first formula in Theorem \ref{edformula}.
The second formula follows from (\ref{eq:usingtheidentity}),
as argued above.
\end{proof}

The Catanese-Trifogli formula in (\ref{eq:CTformula}) is
most useful when
$X$ has low codimension. In that case, we compute the relevant
class in the cohomology ring of the ambient projective space $\PP^n$,
and pull back to $X$. This yields the following proof of Proposition
\ref{prop:implgeneric}.

\begin{proof}[Proof of Proposition~\ref{prop:implgeneric}.]
First consider the case where 
 $X $ is a \generic{} hypersurface
of degree $d$ in $\PP^{n-1}$.
We compute in $H^*(\PP^{n-1})=\Z[h]/\langle h^{n} \rangle$. The line bundle $\cR_X$ is the
pull-back of $\cR_{\PP^{n-1}}$, whose total Chern class is $1-h$. Since ${\rm codim}(X)=1$,
the vector bundle $N_X$ is a line bundle.
By \cite[Example II.8.20.3]{Har}, we have  $N_X=(\cR_X^*)^{\otimes d}$, so that
$N^*_X\otimes \cR_X^*=(\cR_X)^{\otimes (d-1)}$.
In $H^*(\PP^{n-1})$ we~have
\[ \frac{1}{c(\cR_{\PP^{n-1}}) \cdot c(\cR_{\PP^{n-1}}^{\otimes d-1})}
\,\, = \,\, \frac{1}{(1-h)(1-(d-1)h)}. \]
The coefficient of $h^{n-2}$ in this expression equals $\sum_{i=0}^{n-2}
(d-1)^i$, and since the image of $h^{n-2}$ in $H^*(X)$ under pull-back
equals $d = {\rm degree}(X)$ times the class of a point, we find
\[ \ED(X) \,\,= \,\, \int \frac{1}{c(\cR_X) \cdot c(\cN^*_X)}
\,\,= \,\,\, d \cdot \sum_{i=0}^{n-2} (d-1)^i. \]
A similar reasoning applies  when $X$ is a general  complete
intersection of $c$ hypersurfaces of degrees $d_1,\ldots,d_c$. Again,
by working in  $H^*(\PP^{n-1}) = \Z[h]/\langle h^{n} \rangle$, we evaluate
$$
{\rm EDdegree}(X) \,\, \,= \,\,
\int\frac{1}{(1-h)
\prod_{i=1}^c (1-(d_i-1)h) },
$$
where $\int$ refers to the coefficient of the point class in the
pull-back to $X$. To compute this, we expand the
integrand as a series in $h$.
The coefficient
of $h^{n-c-1}$ in that series, multiplied by
${\rm degree}(X) = d_1 \cdots d_c$, is the formula in  (\ref{eq:implgeneric2}).
 Proposition  \ref{prop:implgeneric} then follows from
Theorem~\ref{thm:affineproj}.
Here is the argument.
After a transformation (if necessary) of the given equations  $f_1,\ldots,f_s$,
 the variety $X'$ cut out by
the first $c$ of them is a complete intersection.
Then $X$ is an irreducible component of
$X'$. This implies ${\rm EDdegree}(X)\le{\rm EDdegree}(X')$.
Now, by semicontinuity,
${\rm EDdegree}(X')$ is at most the value
for a \generic{} complete intersection.
\end{proof}

If $X$ is a low-dimensional
variety then Theorem \ref{th:chernpower2} may be more useful, especially if $X$
is a variety whose cohomology ring  we  understand well. We illustrate
this scenario with a computation that generalizes
 Example \ref{ex:rationalnormalcurve}
 from $X \simeq \PP^1$ to higher dimensions.

\begin{proposition} \label{prop:Veronese}
After a  change of  coordinates that creates a transverse intersection
with the isotropic quadric $Q$ in $\PP^{{{m+d}\choose d}-1}$,
the $d$-th Veronese embedding of $\,\PP^{m}$  has ED degree
\begin{equation}
\label{eq:genericveronese}
 \frac{(2d-1)^{m+1} - (d-1)^{m+1}}{d} .
 \end{equation}
\end{proposition}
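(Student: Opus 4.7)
The plan is to apply Theorem~\ref{th:chernpower2} directly to $X=\nu_d(\PP^m)$, the $d$-th Veronese embedding, using the hypothesis that the coordinate change guarantees transversality with the isotropic quadric $Q$. Since $X$ is abstractly isomorphic to $\PP^m$ with tangent bundle $T\PP^m$, its Chern classes are determined by the Euler sequence: in $H^*(\PP^m)=\Z[h]/\langle h^{m+1}\rangle$ we have $c(T\PP^m)=(1+h)^{m+1}$, so $c_i(T\PP^m)=\binom{m+1}{i}h^i$. The embedding $\nu_d$ pulls back the hyperplane class on $\PP^{\binom{m+d}{d}-1}$ to $dh$, so the degree $\deg(c_i(X))$ appearing in \eqref{eq:chernpower2} is an intersection with $(dh)^{m-i}$, giving
\[
\deg(c_i(X)) \;=\; d^{m-i}\int_{\PP^m}\binom{m+1}{i}h^{i}\cdot h^{m-i} \;=\; \binom{m+1}{i}d^{m-i}.
\]

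Substituting into \eqref{eq:chernpower2} yields
\[
\ED(X) \;=\; \sum_{i=0}^{m}(-1)^{i}\bigl(2^{m+1-i}-1\bigr)\binom{m+1}{i}d^{m-i},
\]
and the remaining task is to recognize this as the stated closed form. I would split the sum into the $2^{m+1-i}$ and the $-1$ parts. Using $2^{m+1-i}d^{m-i}=2(2d)^{m-i}$ and the binomial theorem,
\[
\sum_{i=0}^{m}(-1)^{i}\binom{m+1}{i}(2d)^{m-i} \;=\; \tfrac{1}{2d}\Bigl[(2d-1)^{m+1}-(-1)^{m+1}\Bigr],
\]
and similarly
\[
\sum_{i=0}^{m}(-1)^{i}\binom{m+1}{i}d^{m-i} \;=\; \tfrac{1}{d}\Bigl[(d-1)^{m+1}-(-1)^{m+1}\Bigr].
\]
Combining these (the $(-1)^{m+1}$ terms cancel) produces $\bigl((2d-1)^{m+1}-(d-1)^{m+1}\bigr)/d$, as desired.

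The only nontrivial conceptual point is justifying that Theorem~\ref{th:chernpower2} is applicable: one must verify that the hypothesis of transversality to $Q$ can indeed be achieved by a linear change of coordinates on $\PP^{\binom{m+d}{d}-1}$. This is standard by a Bertini-type argument, since the locus of coordinate systems for which $X\cap Q$ is smooth is a dense open subset in the space of quadrics restricted to $X$ (equivalently, in the space of linear changes of coordinates). The rest of the argument is the bookkeeping above, which I expect to be the main calculational step but not a conceptual obstacle.
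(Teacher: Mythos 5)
Your proposal is correct and follows essentially the same route as the paper: apply Theorem~\ref{th:chernpower2} to $X\cong\PP^m$ with $c_i(X)=\binom{m+1}{i}h^i$ and $\deg c_i(X)=\binom{m+1}{i}d^{m-i}$ (the pulled-back hyperplane class being $dh$), then simplify the alternating sum via the Binomial Theorem to obtain \eqref{eq:genericveronese}. Your closing remark on achieving transversality with $Q$ by a generic coordinate change is a harmless addition, since the proposition already assumes such coordinates.
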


\begin{proof}
We write $i_d\colon\PP^{m-1} \to X$ for the $d$th-Veronese embedding in
question.  So, $X$ denotes the image of $\PP^{m-1}$ in $ \PP^{{{m+d-1}\choose
d}-1}$ under the map given by a sufficiently general basis for the space of
homogeneous polynomials of degree $d$ in $m$ variables.
We have $c_i(X)={m + 1\choose i}h^i$, so that
$\deg c_i(X)\,=\, \int  (dh)^{m-i} c_i(X) \,=\,
{m\choose i}d^{m-i}$.
From Theorem \ref{th:chernpower2} we now get
\[ \ED(X)\,\,=\,\,\sum_{i=0}^{m}(-1)^i(2^{m+1-i}-1){m+1\choose i}d^{m-i}.
\]
Using the Binomial Theorem, we see that this alternating sum is equal to
(\ref{eq:genericveronese}).
\end{proof}

Theorem~\ref{edformula} requires $X$ to be smooth. Varieties
with favorable desingularizations are also amenable to Chern class
computations, but the computations become more technical.

\begin{example}
Let $X_r$ denote the  variety of $s\times t$ matrices of rank
$\le r$, in \generic{} coordinates so that $X_r$ intersects  $Q$ transversally.
Its ED degree can be computed by the desingularization in \cite[Proposition 6.1.1.a]{We}.
 The Chern class formula amounts to  a nontrivial computation in the ring of
symmetric functions. We implemented this in {\tt Macaulay2} as follows:

\begin{verbatim}
loadPackage "Schubert2"
ED=(s,t,r)->
(G = flagBundle({r,s-r}); (S,Q) = G.Bundles;
X=projectiveBundle (S^t); (sx,qx)=X.Bundles;
d=dim X; T=tangentBundle X;
sum(d+1,i->(-1)^i*(2^(d+1-i)-1)*integral(chern(i,T)*(chern(1,dual(sx)))^(d-i))))
\end{verbatim}

The first values of $\ED(X_r)$  are summarized in the following table
$$\begin{array}{r|rrrrrrrrrrrr}
&(s,t)&=&(2,2)&(2,3)&(2,4)&(2,5)&(3,3)&(3,4)&(3,5)&(4,4)&(4,5)&(5,5)\\
\hline
r=1&&&6&10&14&18&39&83&143&284&676&2205\\
r=2&&&&&&&39&83&143&1350&4806&55010\\
r=3&&&&&&&&&&284&676&55010\\
r=4&&&&&&&&&&&&2205\\
\end{array}$$
The $r=1$ row can also be computed
with  $P$ a product of two simplices in
Corollary \ref{cor:toricnice}.
\hfill $\diamondsuit$
\end{example}

Using the formalism of Chern classes,
Catanese and Trifogli \cite[page 6030]{CT} derive a general
formula for the degree of the ED discriminant $\Sigma_X$.
Their formula is a complicated expression
in terms of the Chow ring of the ED correspondence $\mathcal{P} \mathcal{E}_X$.
Here are two easier special cases.

\begin{example}
If $X$ is a general smooth curve in $\PP^n$
 of degree $d$ and genus $g$ then
$${\rm degree}(\Sigma_X) \,\, = \,\, 6 (d+g-1) . $$
For instance, the rational normal curve $X$ in
general coordinates in $\PP^n$,
as discussed in Example \ref{ex:rationalnormalcurve}, has
$\,{\rm degree}(X) = n ,\, \ED(X)  = 3n-2$, and $\,
{\rm degree}(\Sigma_X) = 6n-6 $.

If $X$ is a general smooth surface in $\PP^n$
of degree $d$, with Chern classes $c_1(X), c_2(X)$, then
$${\rm degree}(\Sigma_X) \,\, = \,\,
2 \cdot \bigl(\,15 \cdot d + c_1(X)^2 + c_2(X) - 9 \cdot {\rm deg} \,c_1(X) \,\bigr). $$
The formulas in  Example \ref{ex:trifogliEX} can be derived from
these expressions, as in \cite[page 6034]{CT}.
\hfill $\diamondsuit$
\end{example}

\section{Tensors of Rank One} \label{sec:Tensors}


In this section, we present a brief account of recent work on
multidimensional {\em tensors} of rank one \cite{Friedland}. For these,
the ED degree is computed in \cite{FO}, and the average ED degree is
computed in \cite{DH}. Our discussion includes partially symmetric tensors,
and it represents a step towards extending the Eckart-Young theorem from
matrices to tensors.

We consider real tensors $x = (x_{i_1 i_2 \cdots i_p})$
of format $m_1 \times m_2 \times \cdots \times m_p$.
The space of such tensors is the tensor product
$\R^{m_1} \otimes \R^{m_2} \otimes \cdots \otimes \R^{m_p}$,
which we  identify with $\R^{m_1 m_2 \cdots m_p}$.
The corresponding projective space
$\PP(\R^{m_1} \otimes \cdots \otimes\R^{m_p})$
is likewise identified with $ \PP^{m_1m_2 \cdots m_p-1}$.

A tensor $x$ has {\em rank one} if $x = t_1 \otimes t_2 \otimes \cdots  \otimes t_p$
for some vectors  $t_i \in \R^{m_i}$. In coordinates,
\begin{equation}
\label{eq:segrepara}
 \qquad x_{i_1 i_2 \cdots i_p} \,\, = \,\,
t_{1 i_1} t_{2 i_2} \cdots t_{p i_p}
\qquad \hbox{for} \quad 1 \leq i_1 \leq m_1,
 \ldots, 1 \leq i_p \leq m_p.
 \end{equation}
 The set $X$ of all tensors of rank one is an algebraic
 variety in $\R^{m_1 m_2 \cdots m_p}$. It is the cone
 over the {\em Segre variety}
$ \PP(\R^{m_1})\times\cdots \times\PP(\R^{m_p})
= \PP^{m_1-1} \times \cdots \times \PP^{m_p-1}$ in its natural embedding
  in $\PP^{m_1m_2 \cdots m_p-1}$.
  By slight abuse of notation, we use the symbol $X$ also for that
  Segre variety.

\begin{theorem} {\rm (\cite[Theorem 4]{FO})}.
\label{thm:tensor1}
The ED degree of the Segre variety $X $
of rank $1$ tensors of format
 $m_1 {\times} \cdots {\times} m_p$ equals the
coefficient of  the monomial $z_1^{m_1-1}\cdots z_p^{m_p-1}$
in the polynomial
$$ \qquad \quad \prod_{i=1}^p \frac{(\widehat z_i)^{m_i}-z_i^{m_i}}{\, \widehat z_i\,-\,z_i}
\qquad \hbox{ where $\,\,\,\widehat z_i \, = \, z_1 {+} \cdots {+} z_{i-1} + z_{i+1} {+} \cdots {+} z_p$.}
$$
\end{theorem}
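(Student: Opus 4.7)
The plan is to realize the critical points of $d_u$ on the Segre variety $X\subset\PP^{m_1\cdots m_p-1}$ as the zero locus of an explicit section of a vector bundle on $Y:=\PP^{m_1-1}\times\cdots\times\PP^{m_p-1}$ (identified with $X$ via the Segre embedding), and then compute the top Chern class of that bundle. Because $X$ in its natural Segre coordinates is tangent to the isotropic quadric $Q$ (as one checks already for $\PP^1\times\PP^1\subset\PP^3$, where $\ED=2$ rather than the generic value $6$), the Catanese--Trifogli formula of Theorem~\ref{edformula} does not apply directly; the bundle-section approach circumvents this.

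Parametrize by $\psi:(t_1,\ldots,t_p)\mapsto t_1\otimes\cdots\otimes t_p$. Differentiating $d_u\circ\psi$ with respect to $t_{k,j}$ yields, for each $k$, the vector equation
$$t_k\cdot\prod_{\ell\ne k}\|t_\ell\|^2 \,\,= \,\,u\cdot\bigl(t_1\otimes\cdots\otimes\widehat{t}_k\otimes\cdots\otimes t_p\bigr)\quad\text{in }\C^{m_k},$$
so the right-hand contraction is proportional to $t_k$. On the $k$-th factor the Euler sequence gives the rank-$(m_k-1)$ tautological quotient $\sQ_k=\C^{m_k}/\sO(-h_k)$ (pulled back to $Y$), while the contraction defines a section of $\C^{m_k}\otimes\sO(\widehat{h}_k)$, where $\widehat{h}_k=\sum_{\ell\ne k}h_\ell$. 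Projecting onto $\sQ_k\otimes\sO(\widehat{h}_k)$ converts proportionality into vanishing, producing $\sigma_k\in H^0\bigl(Y,\sQ_k\otimes\sO(\widehat{h}_k)\bigr)$. Collecting over all $k$, the critical points of $d_u$ on $X$ correspond, for generic $u$, to zeros of
$$\sigma\,:=\,(\sigma_1,\ldots,\sigma_p)\,\in\,H^0(Y,\sF),\qquad \sF\,:=\,\bigoplus_{k=1}^{p}\sQ_k\otimes\sO(\widehat{h}_k).$$

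Since $\mathrm{rk}\,\sF=\sum_k(m_k-1)=\dim Y$, I would then compute
$$\ED(X)\,\,= \,\,\int_Y c_{\mathrm{top}}(\sF)\,\,=\,\,\int_Y \prod_{k=1}^p c_{m_k-1}\bigl(\sQ_k\otimes\sO(\widehat{h}_k)\bigr)\,.$$
The Euler sequence gives $c(\sQ_k)=(1-h_k)^{-1}\pmod{h_k^{m_k}}$, hence $c_i(\sQ_k)=h_k^i$ for $0\le i\le m_k-1$. Applying the tensor-twist formula \eqref{eq:tensorchern} to $\sQ_k$ and $\sO(\widehat{h}_k)$, and using $\binom{m_k-1-i}{m_k-1-i}=1$, one obtains
$$c_{m_k-1}\bigl(\sQ_k\otimes\sO(\widehat{h}_k)\bigr)\,\,=\,\,\sum_{i=0}^{m_k-1}h_k^{\,i}\,(\widehat{h}_k)^{m_k-1-i}\,\,=\,\,\frac{(\widehat{h}_k)^{m_k}-h_k^{m_k}}{\widehat{h}_k-h_k}.$$
Because $\int_Y$ on $H^*(Y)=\bigotimes_k\Z[h_k]/\langle h_k^{m_k}\rangle$ extracts the coefficient of $h_1^{m_1-1}\cdots h_p^{m_p-1}$, renaming $z_k:=h_k$ yields the stated formula.

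The main obstacle is justifying that, for generic $u$, the zero locus of $\sigma$ has exactly $\ED(X)$ points, each of multiplicity one, and that each such zero corresponds to a genuine critical point in the affine cone. At a zero $[(t_i)]\in Y$ one has $u\cdot(t_1\otimes\cdots\otimes\widehat{t}_k\otimes\cdots\otimes t_p)=c_k\,t_k$ for scalars $c_k$ satisfying $c_k\|t_k\|^2=\langle u,\,t_1\otimes\cdots\otimes t_p\rangle$ independently of $k$, so the full critical equation follows from a single rescaling $(t_i)\mapsto(\lambda_i t_i)$; this is solvable whenever $\prod_\ell\|t_\ell\|^2\ne 0$. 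Thus one must show that for generic $u$ no zero of $\sigma_u$ has any $\|t_\ell\|^2=0$, and that the zeros are transverse. Both follow from a Bertini-type argument: as $u$ varies over $\C^{m_1\cdots m_p}$, the induced sections $\sigma_u$ globally generate $\sF$, so for generic $u$ the zeros of $\sigma_u$ are simple and lie in the open subset $\{\prod\|t_\ell\|^2\ne 0\}$, giving $\ED(X)=\int_Y c_{\mathrm{top}}(\sF)$ on the nose.
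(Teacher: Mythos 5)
The paper does not prove this theorem itself: it cites \cite{FO} and remarks that the proof there counts solutions of the singular vector equations (\ref{eq:eigentuple}) by Chern class techniques, and your argument --- realizing the singular vector tuples as the zeros of a section of $\bigoplus_{k}\sQ_k\otimes\sO(\widehat{h}_k)$ on $\PP^{m_1-1}\times\cdots\times\PP^{m_p-1}$ and computing the top Chern class via the Euler sequence and the twist formula (\ref{eq:tensorchern}) --- is essentially that same proof. The one point to tighten is the final genericity step: besides transversality of the zeros and the avoidance of isotropic factors $t_\ell\cdot t_\ell=0$, you must also exclude zeros with $\langle u,\,t_1\otimes\cdots\otimes t_p\rangle=0$, since these rescale only to the cone vertex and hence give no critical point; this follows from the same incidence-variety dimension count you invoke.
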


The embedding (\ref{eq:segrepara}) of the Segre variety $X$ into
$\PP^{m_1 m_2 \cdots m_p-1}$ is not transversal to the isotropic quadric
$Q$, so our earlier formulas do not apply.  However, it is natural in
the following sense. The Euclidean distance on each factor
$\R^{m_i}$ is preserved under the action by the rotation group $SO(m_i)$.
The product group  $SO(m_1)\times \cdots\times SO(m_p)$ embeds in
the group $SO(m_1\cdots m_p)$, which acts by rotations  on the tensor
space $\R^{m_1 m_2 \cdots m_p}$.  The Segre map (\ref{eq:segrepara})
from $\R^{m_1} \times \cdots \times \R^{m_p}$ to $\R^{m_1 m_2 \cdots m_p}$
 is $SO(m_1)\times \cdots\times SO(m_p)$-equivariant.
This group invariance becomes crucial when, in a short while, we pass
to partially symmetric tensors.

For $p=2$, when the given tensor $u$ is a
matrix, Theorem \ref{thm:tensor1}  gives the Eckart-Young formula
${\rm EDdegree}(X) = {\rm min}(m_1,m_2)$.
The fact that singular vectors are the eigenvectors of $u^T u$
or $u u^T$, can be interpreted as a characterization of the ED
correspondence $\mathcal{E}_X$. The following generalization to
arbitrary tensors, due to Lim \cite{Lim}, is the key
ingredient used in \cite{FO}.
Suppose that $u = (u_{i_1 i_2 \cdots i_p})$ is a given tensor,
and we seek to find its best rank one approximation
$x^* = (x^*_{i_1 i_2 \cdots i_p}) = (t^*_{1 i_1} t^*_{2 i_2} \cdots t^*_{p i_p})$.
Then we have the {\em singular vector equations}
\begin{equation}
\label{eq:eigentuple}
 u \cdot  (t^*_1 \otimes \cdots \otimes t^*_{i-1} \otimes t^*_{i+1} \otimes \cdots \otimes t^*_p)
\, =\, \lambda t^*_i,
\end{equation}
 where the scalars $\lambda$'s are the {\em singular values} of the tensor $u$.
 The dot in (\ref{eq:eigentuple}) denotes tensor contraction.
In the special case $p=2$, these are
the equations, familiar from linear algebra, that characterize
the singular vector pairs of a rectangular matrix \cite[(1.1)]{FO}.
Theorem \ref{thm:tensor1} is proved in \cite{FO} by counting the number of solutions
to (\ref{eq:eigentuple}). The arguments used are based on Chern class techniques as
described in Section $6$.

Consider the ED correspondence $ \sP\mathcal{E}_X $,
introduced before Theorem \ref{thm:EDproj}, but now regarded as a
subvariety of $ \PP^{m_1 \cdots m_p-1} \times  \PP^{m_1 \cdots m_p-1} $.
Its equations  can be derived as follows.
The proportionality conditions of
(\ref{eq:eigentuple}) are expressed as quadratic equations given by $2\times 2$ minors.
This leads to a system of bilinear equations in $(x,u)$. These
equations, together with the
quadratic binomials in $x$ for the Segre variety $X$,
 define the ED correspondence $\sP\mathcal{E}_X$.

\begin{example}
\label{ex:222tensor}
 Let $p = 3$, $m_1 = m_2 = m_3 = 2$,
and abbreviate $a=t^*_1,b=t^*_2,c= t^*_3$, for the Segre embedding of
$X = \PP^1 \times \PP^1 \times \PP^1 $ into $\PP^7$. This toric
threefold is defined by the ideal
\begin{equation}
\label{eq:P1P1P1} \begin{matrix}
\langle \,x_{101} x_{110} - x_{100} x_{111}\, , \,\,\, x_{011} x_{110} - x_{010} x_{111}\, , \,\,\,   x_{011} x_{101} - x_{001} x_{111} \\ \,\,\,
   x_{010} x_{100} - x_{000} x_{110}\, , \,\,\, x_{001} x_{100} - x_{000} x_{101}\, , \,\,\, x_{001} x_{010} - x_{000} x_{011} \\
\,\,\,\,\,\,\, x_{010} x_{101} - x_{000} x_{111}\, , \,\,\, x_{011} x_{100} - x_{000} x_{111}\, , \,\,\,
 x_{001} x_{110} - x_{000} x_{111} \,\rangle.
 \end{matrix}
\end{equation}
The six singular vector equations
(\ref{eq:eigentuple}) for the $2 {\times} 2 {\times} 2$-tensor $x$ reduces to the proportionality between the columns of the following
three matrices
$$ \begin{pmatrix}
u_{000} b_0 c_0 + u_{001} b_0 c_1 + u_{010} b_1 c_0+ u_{011} b_1 c_1 &
 a_0 \\
u_{100} b_0 c_0 + u_{101} b_0 c_1 + u_{110} b_1 c_0 +u_{111} b_1 c_1 &
 a_1 \end{pmatrix}$$
$$\begin{pmatrix}u_{000} a_0 c_0 + u_{001} a_0 c_1 + u_{100} a_1 c_0 + u_{101} a_1 c_1 &
b_0 \\
u_{010} a_0 c_0 + u_{011} a_0 c_1 + u_{110} a_1 c_0 + u_{111} a_1 c_1 &
b_1  \end{pmatrix}$$
$$\begin{pmatrix}u_{000} a_0 b_0 + u_{010} a_0 b_1  + u_{100} a_1 b_0 + u_{110} a_1 b_1 &
c_0 \\
u_{001} a_0 b_0 + u_{011} a_0 b_1 + u_{101} a_1 b_0 + u_{111} a_1 b_1 &
c_1
\end{pmatrix}
$$
We now take the three determinants, by
using $a_i b_j c_k = x_{ijk}$, this gives the bilinear equations
\begin{equation}
\label{eq:threebilinear}
\!\! \begin{matrix}
u_{000} x_{100} + u_{001} x_{101} + u_{010} x_{110} + u_{011} x_{111} \! & \!
= \!&\! u_{100} x_{000} + u_{101} x_{001} + u_{110} x_{010} + u_{111} x_{011}, \\
u_{000} x_{010} + u_{001} x_{011}+u_{100} x_{110} + u_{101} x_{111} \! & \!
= \!&\! u_{010} x_{000}+u_{011} x_{001}+u_{110} x_{100}+u_{111} x_{101}, \\
u_{000} x_{001} + u_{010} x_{011} + u_{100} x_{101} + u_{110} x_{111} \! &  \!
= \!&\! u_{001} x_{000} + u_{011} x_{010} + u_{101} x_{100}+u_{111} x_{110}. \\
\end{matrix}
\end{equation}
The ED correspondence $\,\sP \mathcal{E}_X \subset \PP^7 \times \PP^7\,$
of $\,X = \PP^1 {\times} \PP^1 {\times} \PP^1\,$ is defined by
(\ref{eq:P1P1P1}) and (\ref{eq:threebilinear}).

By plugging the binomials (\ref{eq:P1P1P1})
 into (\ref{eq:critideal2}), we verify
${\rm EDdegree}(X) = 6$, the number from Theorem \ref{thm:tensor1}.
By contrast, if we scale the $x_{ijk}$
so that $X$ meets the isotropic
quadric $Q$ transversally, then
${\rm EDdegree}(X) =
15 \cdot 6 - 7 \cdot 12 + 3 \cdot 12 - 1 \cdot 8 = 34 $,
by Corollary \ref{cor:toricnice}.
\hfill $\diamondsuit $
\end{example}

Our duality results in Section \ref{sec:Duality} have nice
consequences for rank one tensor approximation.
It is known \cite[Chapter XIV]{GKZ} that the dual variety
$Y = X^*$ is a hypersurface if and only if
\begin{equation}
\label{eq:boundaryformat}
 2 \cdot \max(m_1,m_2,\ldots,m_p)
\,\,\leq \,\, m_1 + m_2 + \cdots + m_p - p + 2 .
\end{equation}
In that case, the polynomial defining $Y$ is
the {\em hyperdeterminant} of format $m_1 \times m_2 \times \cdots \times m_p$.
For instance, in Example \ref{ex:222tensor},
where $P$ is the $3$-cube, we get the
{\em $2 \times 2 \times 2$-hyperdeterminant}
$$ \begin{matrix}
Y &=& V \bigl(\,
x_{000}^2 x_{111}^2-2 x_{000} x_{001} x_{110} x_{111}-2x_{000} x_{010} x_{101} x_{111}
  -2 x_{000} x_{011} x_{100} x_{111} \\ & & \qquad + 4 x_{000} x_{011} x_{101} x_{110}
  + x_{001}^2 x_{110}^2+4 x_{001} x_{010} x_{100} x_{111} -
  2 x_{001} x_{010} x_{101} x_{110} \\ &  &
  - 2 x_{001}  x_{011} x_{100} x_{110}
  + x_{010}^2 x_{101}^2- 2 x_{010} x_{011} x_{100} x_{101} + x_{011}^2 x_{100}^2 \,  \bigr).
  \end{matrix}
$$
The following result was proved for $2 \times 2 \times 2$-tensors
by Stegeman and Comon \cite{SC}. However, it holds
for arbitrary $m_1,\ldots,m_p$. The proof
  is an immediate consequence
of Theorem \ref{thm:dualED}.

\begin{corollary}
Let $u$ be a tensor and $u^*$ its best rank one approximation.
Then $u-u^*$ is in the dual variety $Y$.
 In particular, if  (\ref{eq:boundaryformat}) holds then
the hyperdeterminant of $u-u^*$ is zero.
\end{corollary}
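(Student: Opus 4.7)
The plan is to apply Theorem \ref{thm:dualED} to the Segre variety $X$ of rank one tensors and its dual variety $Y = X^*$. First I would note that the best rank one approximation $u^*$ of $u$, being the global minimizer of $d_u$ on $X$, is in particular a critical point of $d_u$ on the smooth locus $X \setminus X_{\mathrm{sing}}$. The singular locus of the affine cone over the Segre variety consists only of the origin, so as long as $u^* \neq 0$ (which holds whenever $u$ is not itself orthogonal to every rank one direction) the critical-point machinery applies.

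Next, the Lagrange condition at the minimizer reads $u - u^* \perp T_{u^*} X$. Combined with $u^* \in X \setminus X_{\mathrm{sing}}$, this places the pair $(u^*, u-u^*)$ in the conormal variety $\mathcal{N}_{X,Y}$. Projecting onto the second factor, or equivalently invoking Theorem \ref{thm:dualED} which says that $x \mapsto u - x$ carries critical points of $d_u$ on $X$ to critical points of $d_u$ on $Y$, gives $u - u^* \in Y$.

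For the second assertion, I would invoke the classical fact recalled above: under hypothesis (\ref{eq:boundaryformat}), the dual variety $Y$ is a hypersurface cut out by a single irreducible polynomial, namely the hyperdeterminant of format $m_1 \times \cdots \times m_p$. Hence the containment $u - u^* \in Y$ is equivalent to the vanishing of the hyperdeterminant on $u - u^*$.

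The only real subtlety is that Theorem \ref{thm:dualED} is formulated for generic data $u$, whereas the corollary asserts the conclusion for every tensor $u$. I would handle this by noting that the argument above is in fact independent of genericity: the implication ``$u^*$ smooth on $X$ and $u - u^* \perp T_{u^*} X$'' $\Rightarrow$ ``$u - u^* \in Y$'' is just the defining property of the dual variety. The only residual case is the degenerate situation where the best approximation is $u^* = 0$, which lies in $X_{\mathrm{sing}}$; here I would perturb $u$ slightly to $u_\varepsilon$ for which the best approximation $u_\varepsilon^*$ is nonzero, apply the argument to conclude $u_\varepsilon - u_\varepsilon^* \in Y$, and then take $\varepsilon \to 0$ using the closedness of $Y$ together with the fact that the distance function depends continuously on the data. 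This continuity step is the main technical point to verify, but is routine given that $X$ is closed.
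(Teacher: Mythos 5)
Your proof is correct and follows the same route as the paper, which simply cites Theorem \ref{thm:dualED} as the source of this corollary. You go beyond the paper's one-line remark by addressing the genericity issue explicitly, and your resolution is the right one: once you observe that $u-u^* \perp T_{u^*}X$ at a smooth point $u^*$, membership $u-u^*\in Y$ follows directly from the definition of the dual variety (as the closure of all such normal vectors), so no genericity hypothesis is needed. One could also note that for a nonzero tensor $u$ the case $u^*=0$ never arises, because $2\langle u,x\rangle \le \|x\|^2$ for all $x$ in the Segre cone forces $\langle u,x\rangle=0$ on the cone (by letting $x\to 0$ and using $x\mapsto -x$), hence $u=0$ since the cone spans the ambient space; this would make the perturbation step unnecessary, though your limiting argument is also valid.
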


This result explains the fact, well known in the
numerical multilinear algebra community,
that tensor decomposition and best rank one approximation
are unrelated for $p\ge 3$.
The same argument gives the following generalization
to arbitrary toric varieties $X_A$.
Following \cite{GKZ}, here $A$ is a point configuration,
whose convex hull is the polytope $P$ in
Corollary \ref{cor:toricnice}.
Fix a projective toric variety $X_A \subset \PP^n$
whose dual variety $(X_A)^*$ is a hypersurface.
The defining polynomial of that hypersurface
is the {\em A-discriminant} $\Delta_A$.
See \cite{GKZ} for details.

\begin{corollary}\label{prop:discrimvanish}
Given a general point $u \in \R^{n+1}$,
let $x$ be a point in the cone over $X_A$
which is critical for the squared distance function $d_u$.
The $A$-discriminant
$\Delta_A$ vanishes at $u-x$.
\end{corollary}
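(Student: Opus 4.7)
The plan is to deduce this corollary directly from Theorem~\ref{thm:dualED}, essentially as an immediate consequence of ED duality, with the toric/$A$-discriminant language as window dressing. First I would set up the dual pair: let $X \subset \R^{n+1}$ denote the affine cone over $X_A \subset \PP^n$, and let $Y = X^*$ be the dual variety in the same ambient $\R^{n+1}$ (identified via the standard bilinear form). By the hypothesis that $\Delta_A$ is a polynomial in the sense of \cite{GKZ}, the dual variety of $X_A$ is a hypersurface, so $Y$ is precisely the affine cone over the hypersurface cut out by $\Delta_A$; that is, a point $v \in \R^{n+1}$ lies in $Y$ if and only if $\Delta_A(v) = 0$.

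Next I would apply Theorem~\ref{thm:dualED} to the irreducible affine cone $X$ and the generic data point $u$. The theorem provides a bijection $x \mapsto u - x$ from the critical points of $d_u$ on $X$ to the critical points of $d_u$ on $Y$. In particular, for the critical point $x$ in the statement of the corollary, the image $u - x$ is a point of $Y$. Combining this with the previous paragraph, $\Delta_A(u-x) = 0$, which is exactly what we want.

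The only thing to check is that the hypothesis of Theorem~\ref{thm:dualED} (that $X$ is an irreducible affine cone) is met and that genericity of $u$ is used correctly: $X_A$ is irreducible by construction, so its affine cone $X$ is an irreducible affine cone, and the theorem is stated for \generic{} $u$, which matches the hypothesis of the corollary. Since $x$ is assumed critical for $d_u$, the bijection applies at $x$ itself, and no further regularity argument is required.

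The main obstacle, such as it is, is purely notational: matching the toric setup of \cite{GKZ} (where $\Delta_A$ is defined as the defining equation of the $X_A$-dual when that dual is a hypersurface) with the Euclidean setup of Section~\ref{sec:Duality} (where the dual is taken with respect to the fixed bilinear form $q(x,y) = \sum x_i y_i$). Once one notes that both dualities use the same standard pairing on $\R^{n+1}$, the identification of $Y$ with $V(\Delta_A)$ is immediate, and the corollary follows by a one-line appeal to Theorem~\ref{thm:dualED}.
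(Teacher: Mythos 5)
Your proposal is correct and matches the paper's own argument: the corollary is, as the paper itself remarks, the same immediate consequence of Theorem~\ref{thm:dualED} used for the tensor case, with $Y = X_A^*$ identified as the hypersurface $V(\Delta_A)$ and $u-x$ landing on $Y$ by the duality bijection. The one small caveat is direction of hypothesis: the paper fixes $X_A$ with $(X_A)^*$ a hypersurface and then defines $\Delta_A$ as its equation, whereas you phrase it as ``$\Delta_A$ is a polynomial implies dual is a hypersurface''; these are equivalent in context, so this is purely a matter of exposition.
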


 The construction of singular vectors and the ED degree formula in Theorem \ref{thm:tensor1}
 generalizes to partially symmetric tensors.
Corollary \ref{prop:discrimvanish} continues to apply in this setting.
We denote by $S^a\R^m$ the $a$-th symmetric power of $\R^m$.
Fix positive integers $\omega_1, \ldots, \omega_p$.
We consider the embedding of the Segre variety $X=\PP(\R^{m_1})\times\cdots \times\PP(\R^{m_p})$
 into the space of tensors $\PP(S^{\omega_1}\R^{m_1}\otimes\cdots \otimes S^{\omega_p}\R^{m_p})$,
sending $(v_1,\ldots, v_p)$ to $v_1^{\omega_1}\otimes\cdots\otimes v_p^{\omega_p}$.
The image is called a {\em Segre-Veronese variety}.
When $p=1$ we get the classical
{\em Veronese variety} whose points are symmetric decomposable tensors in
$\PP(S^{\omega_1}\R^{m_1})$.
A symmetric tensor $x \in S^{\omega_1}\R^{m_1}$ corresponds to a homogeneous polynomial of degree $\omega_1$
in $m_1$ indeterminates. Such a polynomial  sits in the Veronese variety
$X$ if it can be expressed as the power of a linear form.

At this point, it is extremely important to note the correct choice of coordinates
on the space $S^{\omega_1}\R^{m_1}\otimes\cdots \otimes S^{\omega_p}\R^{m_p}$.
We want the group  $SO(m_1) \times \cdots \times SO(m_p)$ to act by
rotations on that space, and our Euclidean distance
must be compatible with that action.
In order for this to happen, we must include
square roots of appropriate multinomial coefficients in
the parametrization of the Segre-Veronese variety.
We saw this Example \ref{ex:twistedcubic} for the twisted cubic curve
($p=1, m_1 = 2, \omega_1 = 3$) and in Example \ref{ex:closesym} for  symmetric
matrices ($p=1, \omega_2=3$). In both examples, the
Euclidean distances comes from the ambient
space of all tensors.

\begin{example}
Let $p = 2, m_1 = 2, m_2 = 3, \omega_1 = 3, \omega_2 = 2$.
The corresponding space
$S^3 \R^2 \otimes S^2 \R^3$ of partially symmetric tensors
has dimension $24$. We regard this as a subspace
in the $72$-dimensional space of
$2 {\times} 2 {\times} 2 {\times} 3 {\times} 3$-tensors.
With this, the coordinates on $S^3 \R^2 \otimes S^2 \R^3$ are
$x_{ijklm}$ where $1 \leq i \leq j \leq k \leq 2$
and $1 \leq l \leq m \leq 3$, and the squared distance function is
$$ \begin{matrix} d_u(x) & = &
 \,\quad (u_{11111} - x_{11111})^2 +
2 (u_{11112} - x_{11112})^2
+ \cdots +    (u_{11133} - x_{11133})^2 \\
& &
+ \, 3  (u_{12111} - x_{12111})^2
+ 6  (u_{12112} - x_{12112})^2
+ \cdots +
(u_{22233} - x_{22233})^2.
\end{matrix}
$$
In the corresponding projective space $\PP^{23} = \PP(S^3 \R^2 \otimes S^2 \R^3)$,
the threefold $X = \PP^1 \times \PP^2$ is
embedded by the line bundle $\mathcal{O}(3,2)$.
It is cut out by {\em scaled} binomial equations such as
$3 x_{11111} x_{22111} - x_{12111} x_{12111}$.
The ED degree of this
Segre-Veronese variety $X$ equals $27$.
\hfill $\diamondsuit$
\end{example}

\begin{theorem} {\rm (\cite[Theorem 5]{FO})}.
\label{thm:tensor2}
Let $X \subset \PP(S^{\omega_1}\C^{m_1}\otimes \cdots \otimes S^{\omega_p}\C^{m_p})$ be the Segre-Veronese variety of  partially symmetric tensors of rank one. In
the invariant coordinates described above, the ED degree of $X$ is the
coefficient of  the monomial $z_1^{m_1-1}\cdots z_p^{m_p-1}$
in the polynomial
$$ \qquad \quad \prod_{i=1}^p \frac{(\widehat z_i)^{m_i}-z_i^{m_i}}{\, \widehat z_i\,-\,z_i}
\qquad \hbox{ where $\,\,\,\widehat z_i \, = \,
(\sum_{j=1}^p \omega_j z_j) - z_i$.}
$$
\end{theorem}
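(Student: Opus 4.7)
The plan is to extend the Chern-class argument of \cite{FO} (which handles Theorem~\ref{thm:tensor1}) to the Segre-Veronese setting, keeping track of the $\omega_i$'s via the natural line bundle $\sO(\omega_1, \ldots, \omega_p)$. First, parametrize the cone over $X$ by
\[
\psi \colon (t_1, \ldots, t_p) \,\longmapsto\, t_1^{\omega_1} \otimes \cdots \otimes t_p^{\omega_p}.
\]
Because the inner product on $S^{\omega_i}\C^{m_i}$ is the $\OO(m_i)$-invariant one (so that $\|v^{\omega_i}\|^2 = \|v\|^{2\omega_i}$), the perpendicularity condition $u - \psi(t) \perp T_{\psi(t)} X$ becomes, after contracting against each partial derivative $\partial \psi/\partial t_i$, the partially symmetric singular vector system
\[
u \cdot \bigl( t_1^{\omega_1} \otimes \cdots \otimes t_i^{\omega_i - 1} \otimes \cdots \otimes t_p^{\omega_p} \bigr) \,=\, \lambda_i \, t_i \qquad (i = 1, \ldots, p),
\]
which generalizes Lim's equations \eqref{eq:eigentuple} used in the proof of Theorem~\ref{thm:tensor1}.

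The next step is to reinterpret this system on the parameter space $Y := \PP^{m_1 - 1} \times \cdots \times \PP^{m_p - 1}$. Let $h_i$ be the hyperplane class pulled back from the $i$th factor, let $\sQ_i$ denote the pullback to $Y$ of the tautological quotient bundle on $\PP^{m_i - 1}$ (so that $0 \to \sO(-1) \to \underline{\C^{m_i}} \to \sQ_i \to 0$), and set $L_i := \sO(\omega_1, \ldots, \omega_i - 1, \ldots, \omega_p)$. The $i$th singular vector equation says that a particular section of $\underline{\C^{m_i}} \otimes L_i$, obtained from $u$ by contraction, projects to zero in $\sQ_i \otimes L_i$. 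Hence critical points on the cone, modulo the $(p-1)$-dimensional torus of scalings of $(t_1,\ldots,t_p)$ preserving $\psi$, are in bijection with zeros of a section $\sigma_u$ of
\[
\sF \,:=\, \bigoplus_{i=1}^p \sQ_i \otimes L_i.
\]
Since $\mathrm{rank}\,\sF = \sum_i (m_i - 1) = \dim Y$, and since a generic smoothness argument (as in the proof of Theorem~\ref{thm:sumpolar}) shows $\sigma_u$ to be transverse to the zero section for \generic{} $u$, we obtain $\ED(X) = \int_Y c_{\mathrm{top}}(\sF) = \int_Y \prod_{i=1}^p c_{m_i - 1}(\sQ_i \otimes L_i)$.

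The remaining step is a clean Chern-class identity. Tensoring the tautological sequence by $L_i$ and applying Whitney's product formula yields
\[
c(\sQ_i \otimes L_i) \,=\, \frac{(1 + c_1(L_i))^{m_i}}{1 + c_1(L_i) - h_i}.
\]
Writing $z_j := h_j$ and $\widehat{z}_i := \sum_{j} \omega_j z_j - z_i$, one has $c_1(L_i) = \widehat{z}_i$. Since $z_i^{m_i} = h_i^{m_i} = 0$ in $H^\ast(Y)$, long division simplifies the right-hand side to
\[
c(\sQ_i \otimes L_i) \,=\, \sum_{k=0}^{m_i - 1} (1 + \widehat{z}_i)^{m_i - 1 - k}\, z_i^k,
\]
whose top-degree component is exactly $c_{m_i - 1}(\sQ_i \otimes L_i) = \sum_{k=0}^{m_i - 1} \widehat{z}_i^{m_i - 1 - k} z_i^k = (\widehat{z}_i^{m_i} - z_i^{m_i})/(\widehat{z}_i - z_i)$. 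Multiplying over $i$ and integrating over $Y$ extracts the coefficient of $\prod_i h_i^{m_i - 1}$, which is precisely the coefficient of $z_1^{m_1 - 1} \cdots z_p^{m_p - 1}$ in $\prod_i (\widehat{z}_i^{m_i} - z_i^{m_i})/(\widehat{z}_i - z_i)$, as claimed.

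The hard part will be the first step. Writing the perpendicularity condition in the clean contraction form above requires careful bookkeeping of the scaling factors (square roots of multinomial coefficients) that are built into the invariant metric on $S^{\omega_i}\C^{m_i}$, and establishing the bijection between critical points of $d_u$ on the cone and zeros of $\sigma_u$ on $Y$ requires accounting both for the $(p-1)$-dimensional torus acting on parameters and for potential spurious critical points lying on the isotropic quadric, which must be excluded by genericity of $u$.
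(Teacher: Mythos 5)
The paper does not give a self-contained proof of this theorem but cites \cite[Theorem 5]{FO}, indicating only that the singular vector equations \eqref{eq:eigentuple} and the Chern class techniques of Section~\ref{sec:Chern} generalize to the partially symmetric case. Your reconstruction---recasting the singular vector equations as the vanishing of a generic section of $\bigoplus_i \sQ_i \otimes L_i$ on $\PP^{m_1-1}\times\cdots\times\PP^{m_p-1}$ and evaluating its top Chern class via the twisted Euler sequence---is exactly the argument of that reference, and the bundle identifications, the Whitney computation $c(\sQ_i\otimes L_i)=(1+\widehat z_i)^{m_i}/(1+\widehat z_i-z_i)$, and the extraction of $c_{m_i-1}=(\widehat z_i^{m_i}-z_i^{m_i})/(\widehat z_i-z_i)$ are all correct.
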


The critical points of $d_u$ on $X$ are characterized by the
singular vector equations (\ref{eq:eigentuple}),
obtained by restricting from
ordinary tensors to partially symmetric tensors.
Of special interest is the case $p = 1$,  with
$m_1 = m $ and $\omega_1 =\omega$. Here $X$ is the
Veronese variety of  symmetric $m \times m \times \cdots \times m$
 tensors with $\omega$ factors that have rank one.

 \begin{corollary} \label{cor:veronese}
 The Veronese variety $X \subset \PP(S^\omega \C^m)$,
 with $SO(m)$ invariant coordinates,~has
  $$  {\rm EDdegree}(X) \,\, = \,\,  \frac{(\omega-1)^m - 1}{\omega-2}.  $$
 \end{corollary}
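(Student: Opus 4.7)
The plan is to derive this corollary as an immediate specialization of Theorem~\ref{thm:tensor2} to the one-factor case. First I would set $p = 1$, $m_1 = m$, and $\omega_1 = \omega$ in the formula of that theorem. With only one index, the hat-sum reduces to
\[
\widehat{z}_1 \,=\, \Bigl(\sum_{j=1}^{1} \omega_j z_j\Bigr) - z_1 \,=\, \omega z_1 - z_1 \,=\, (\omega-1)\,z_1,
\]
so the product over $i$ collapses to a single factor. Hence $\ED(X)$ equals the coefficient of $z_1^{m-1}$ in
\[
\frac{\bigl((\omega-1)z_1\bigr)^m - z_1^m}{(\omega-1)z_1 - z_1}.
\]

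Next I would simplify this expression. Factoring $z_1^m$ from the numerator and $z_1$ from the denominator gives
\[
\frac{\bigl((\omega-1)^m - 1\bigr)\,z_1^m}{(\omega-2)\,z_1} \,=\, \frac{(\omega-1)^m - 1}{\omega - 2}\cdot z_1^{m-1},
\]
whose coefficient of $z_1^{m-1}$ is precisely $\bigl((\omega-1)^m - 1\bigr)/(\omega-2)$, as claimed.

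The only point warranting care is the case $\omega = 2$, where both numerator and denominator vanish and the closed-form expression is $0/0$. Here one should interpret the rational function via the polynomial identity
\[
\frac{a^m - b^m}{a-b} \,=\, \sum_{k=0}^{m-1} a^{m-1-k}\,b^{k},
\]
applied to $a = (\omega-1)z_1$, $b = z_1$. Evaluating at $\omega = 2$ gives $m\,z_1^{m-1}$, matching the limit $\lim_{\omega\to 2}\bigl((\omega-1)^m-1\bigr)/(\omega-2) = m$. Since the computation is a single application of a previously established theorem followed by an elementary polynomial identity, there is no real obstacle: the substantive content was already carried by Theorem~\ref{thm:tensor2}, and this corollary is a one-line specialization.
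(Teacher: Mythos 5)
Your proof is correct and follows exactly the route the paper intends: Corollary~\ref{cor:veronese} is stated as the immediate specialization of Theorem~\ref{thm:tensor2} to $p=1$, $m_1=m$, $\omega_1=\omega$, where $\widehat z_1=(\omega-1)z_1$ and the coefficient of $z_1^{m-1}$ is $\bigl((\omega-1)^m-1\bigr)/(\omega-2)$. Your extra remark on interpreting the expression at $\omega=2$ via the identity $\frac{a^m-b^m}{a-b}=\sum_{k=0}^{m-1}a^{m-1-k}b^k$ is a sensible clarification but does not change the argument.
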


This is the formula in \cite{CS} for the number of eigenvalues of a tensor.
Indeed, for symmetric tensors,  the eigenvector equations of \cite{CS}
translate into (\ref{eq:eigentuple}).
This is well-known in the
matrix case $(\omega=2)$: computing eigenvalues and computing singular
values is essentially equivalent.
At present, we do not know how
to extend our results to tensors of rank $r \geq 2$.

\smallskip

We now shift gears and examine the {\em average} ED degrees of rank one tensors.
As above, we write $X$ for the cone over the Segre variety, given by its
distinguished embedding (\ref{eq:segrepara}) into $\R^{m_1 m_2 \cdots
m_p}$.  We fix the standard Gaussian distribution $\omega$ centered
at the origin in $\R^{m_1 m_2 \cdots m_p}$.

In \cite{DH} the average ED degree of  $X$ is
expressed in terms of the average absolute value of the determinant on
a Gaussian-type matrix ensemble constructed as follows.  Set $m:=\sum_i
(m_i-1)$ and let $A=(a_{k\ell})$ be the symmetric $m \times m$-matrix with
$p \times p$-block division into blocks of sizes $m_1-1,\ldots,m_p-1$
whose upper triangular entries $a_{k\ell},\,1 \leq k \leq \ell \leq m$,
are
\[ a_{k\ell}=\begin{cases}
U_{k\ell} & \text{if $k,\ell$ are from distinct blocks,}\\
U_0 & \text{if $k=\ell$, and}\\
0 & \text{otherwise}.
\end{cases}
\]
Here $U_0$ and the $U_{k\ell}$ with $k<\ell$ in distinct blocks
are independent  normally distributed scalar random variables.
 For instance, if $p=3$ and $(m_1,m_2,m_3)=(2,2,3)$, then
\[ A=\begin{bmatrix}
U_0 & U_{12} & U_{13} & U_{14} \\
U_{12} & U_0 & U_{23} & U_{24} \\
U_{13} & U_{23} & U_0 & 0 \\
U_{14} & U_{24} & 0 & U_0
\end{bmatrix}
\]
with $U_0, U_{12}, U_{13}, U_{14}, U_{23},U_{24} \sim N(0,1)$ independent.

\begin{theorem}[\cite{DH}] \label{thm:aedtensor}
The average ED degree of the Segre variety $X$ relative to the standard Gaussian
distribution on $\R^{m_1 m_2 \cdots m_p}$ equals
\[ \aED(X) \,\, = \,\, \frac{\pi^{p/2}}{2^{m/2} \cdot \prod_{i=1}^p
\Gamma\left(\frac{m_i}{2}\right)} \cdot \mathbb{E}(|\det(A)|), \]
where $\mathbb{E}(|\det(A)|)$ is the expected absolute
determinant of the random matrix $A$.
\end{theorem}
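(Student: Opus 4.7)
The plan is to make the integral formula~\eqref{eq:aED3} fully explicit for the Segre variety and identify the result as an expectation involving the random matrix $A$. Using the parametrization of $\cE_X$ from Corollary~\ref{cor:unirational}, namely $\phi(t,s) = (\psi(t),\, \psi(t) + n(t,s))$ with $\psi(t_1,\ldots,t_p) = t_1 \otimes \cdots \otimes t_p$ and $n(t,s) = \sum_\alpha s_\alpha N_\alpha(t)$ ranging over $(T_{\psi(t)}X)^\perp$, and working in an orthonormal moving frame adapted to the tangent--normal decomposition, the Jacobian of $\pi_2 \circ \phi$ is block-triangular with the identity on normal coordinates and the matrix $I - S_n$ on tangent coordinates, where $S_n$ is the shape operator at $\psi(t)$ in direction $n$. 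Because $X$ is a cone, $x \perp n$ implies $\|x+n\|^2 = \|x\|^2 + \|n\|^2$, so the Gaussian density factors and \eqref{eq:aED2} reduces to
\[
\aED(X) \,=\, \frac{1}{(2\pi)^{N/2}} \int_X e^{-\|x\|^2/2}
\int_{(T_xX)^\perp} |\det(I - S_n)|\, e^{-\|n\|^2/2}\, dn\, dV_X(x),
\]
where $N = m_1 m_2 \cdots m_p$.

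I would then exploit the $G := SO(m_1) \times \cdots \times SO(m_p)$-action on $X$. Polar decomposition of the cone gives $dV_X = \rho^m \, d\rho\, dV_{X \cap S^{N-1}}$ with $\rho := \|x\|$, and $G$ acts transitively on the unit slice $X \cap S^{N-1}$. A tangent-space check at $e_{1,1} \otimes \cdots \otimes e_{p,1}$ shows that the restricted Segre map $\prod_i S^{m_i-1} \to X \cap S^{N-1}$ is a $2^{p-1}$-to-one local isometry, so $\mathrm{vol}(X \cap S^{N-1}) = 2\prod_i \pi^{m_i/2}/\Gamma(m_i/2)$. Twice differentiating $\psi$ at $x_\rho := \rho \cdot e_{1,1} \otimes \cdots \otimes e_{p,1}$ shows that the second fundamental form $\mathrm{II}(\tilde T_i^k, \tilde T_j^l)$ vanishes whenever $i = j$ (multilinearity of $\psi$) or one of $k,l$ equals $1$, and otherwise equals $\rho^{-1}$ times the basis tensor $e_{1,1} \otimes \cdots \otimes e_{i,k} \otimes \cdots \otimes e_{j,l} \otimes \cdots \otimes e_{p,1}$. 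Consequently, restricted to the $m$-dimensional non-radial tangent space, $S_n$ has zero diagonal blocks and off-diagonal-block entries $\rho^{-1} s_\alpha$ indexed by two-deviation basis tensors; the normal directions with $\geq 3$ deviations decouple from the determinant and contribute a pure Gaussian factor $(2\pi)^{(N-m-1-d_2)/2}$, where $d_2 := \sum_{i<j}(m_i-1)(m_j-1)$.

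Finally, substitute $\sigma_\alpha := s_\alpha/\rho$ in the remaining two-deviation normal integral and then $\tau_\alpha := \rho \sigma_\alpha$. The identity $\det(I - S_n) = \rho^{-m}\det(\rho I - \Sigma(\tau))$, where $\Sigma(\tau)$ has entries $\tau_\alpha$ in off-diagonal different-block positions and zeros elsewhere, combines with the Jacobians $\rho^{d_2}$ and $\rho^{-d_2}$ from the substitutions and the $\rho^m$ from polar decomposition so that all $\rho$-powers cancel, producing a standard Gaussian integral in the $1 + d_2$ variables $(\rho, \tau) \in (0,\infty) \times \R^{d_2}$. Since the distribution of $\Sigma(\tau)$ is invariant under $\tau \mapsto -\tau$, the $\rho$-integration extends to $\R$ at the cost of a factor $\tfrac{1}{2}$. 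Identifying $\rho$ with the common diagonal scalar $U_0$ and the $\tau_\alpha$'s with the off-diagonal Gaussians $U_{k\ell}$ reproduces the matrix $A$, so the integral equals $\tfrac{1}{2}(2\pi)^{(1+d_2)/2}\,\mathbb{E}(|\det A|)$, and collecting all prefactors yields $\pi^{p/2}/(2^{m/2} \prod_i \Gamma(m_i/2))$. The main obstacle is the explicit shape-operator computation together with the bookkeeping of $\rho$-powers through the substitutions: the $\rho^{-1}$ in the second fundamental form, the $\rho^m$ from polar decomposition, and the substitution Jacobians must conspire precisely to place $\rho$ on the diagonal of $A$ as the common scalar $U_0$.
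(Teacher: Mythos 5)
Your outline is correct and is essentially the argument the paper points to: Theorem~\ref{thm:aedtensor} is not proved here but cited from \cite{DH} with the remark that the proof is a computation like Example~\ref{ex:rationalnormalaed}, and your proposal carries out exactly that computation --- the ED-correspondence integral \eqref{eq:aED2} with Jacobian $|\det(I-S_n)|$ in adapted frames, reduction by $SO(m_1)\times\cdots\times SO(m_p)$-invariance and the cone structure, the explicit second fundamental form of the Segre cone (nonzero only in mixed-block, two-deviation directions, with factor $\rho^{-1}$), decoupling of the deeper normal directions, and the $\rho\mapsto-\rho$ symmetrization identifying $\rho$ with $U_0$ --- and the prefactor $\pi^{p/2}/(2^{m/2}\prod_i\Gamma(m_i/2))$ checks out. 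Since this is the same route as \cite{DH}, there is nothing essentially different to compare.
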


The proof of this theorem, which can be seen as a first step in
{\em random tensor theory}, is a computation similar to that in
Example~\ref{ex:rationalnormalaed}, though technically more difficult.
Note the dramatic decrease in dimension: instead of sampling tensors
$u$ from an $m_1 \cdots m_p$-dimensional space and computing the critical
points of $d_u$, the theorem allows us to compute the average ED degree by
sampling $m \times m$-matrices and computing their determinants. Unlike in
Example~\ref{ex:rationalnormalaed}, we do not expect that there exists
a closed form expression for $\mathbb{E}(|\det(A)|)$, but existing
asymptotic results on the expected absolute determinant, e.g. from
\cite{TV}, should still help in comparing $\aED(X)$ with $\ED(X)$ for
large $p$.  The following table from \cite{DH} gives some values
for the average ED degree  and compares them with
Theorem \ref{thm:tensor1}:
\[
\centering
    \begin{tabular}{l|l|l}
      Tensor format & average ED degree & ED degree \\ \hline
    $n\times m$ & $\min(n,m)$  & $\min(n,m)$ \\
    $2^3=2\times2\times2$         & 4.287         & 6
\\
    $2^4$         & 11.06       & 24        \\
    $2^5$         & 31.56       & 120        \\
    $2^6$         & 98.82       & 720        \\
    $2^7$         & 333.9       & 5040        \\
    $2^8$         & $1.206\cdot 10^3$         & 40320
\\
    $2^9$         & $4.611\cdot 10^3$         & 362880
\\
    $2^{10}$      & $1.843\cdot 10^4$         & 3628800 \\
    $2\times 2 \times 3$         & 5.604         & 8
\\
    $2\times 2 \times 4$         & 5.556         & 8
\\
    $2\times 2 \times 5$         & 5.536         & 8
\\
    $2\times 3 \times 3$         & 8.817         & 15
\\
    $2\times 3 \times 4$         & 10.39         & 18
\\
    $2\times 3 \times 5$         & 10.28         & 18
\\
    $3\times 3\times 3$          & 16.03         & 37
\\
    $3\times 3\times 4$          & 21.28         & 55 \\
    $3\times 3\times 5$         & 23.13         & 61
\\
    \end{tabular}
\]
It is known from \cite{FO} that $\ED(X)$ stabilizes outside the range
(\ref{eq:boundaryformat}), i.e., if the $m_i$ are ordered increasingly,
for $m_p-1 \geq \sum_{i=1}^{p-1} (m_i-1)$. This can be derived
from Theorem~\ref{thm:tensor1}. For $\aED(X)$ we observe
a similar behavior experimentally, except that the average seems to
slightly {\em decrease} with $m_p-1$ beyond this bound.
At present we have neither a geometric explanation for this
phenomenon nor a proof using the formula in Theorem~\ref{thm:aedtensor}.

\section*{Epilogue}
We conclude our investigation of the Euclidean distance degree by loosely
paraphrasing Hilbert and Cohn-Vossen in their famous book {\em Anschauliche
Geometrie} \cite[Chapter I, \S 1]{HCV}:

\begin{quote}
{\em  The simplest curves are the planar curves. Among them, the simplest one
is the line} (ED degree 1). {\em The next simplest curve is the circle} (ED
degree 2). {\em After that come the parabola} (ED degree 3),
{\em and, finally, general conics} (ED degree 4).
\end{quote}

\bigskip

\noindent
{\bf Acknowledgements.}\\
Jan Draisma was supported by a Vidi grant from the
Netherlands Organisation for Scientific Research (NWO), and
Emil Horobe\c{t} by  the NWO Free Competition grant
{\em Tensors of bounded rank}. Giorgio Ottaviani is member of
GNSAGA-INDAM. Bernd Sturmfels was supported
by  the NSF (DMS-0968882), DARPA (HR0011-12-1-0011),
and the Max-Planck Institute f\"ur Mathematik in Bonn, Germany.
Rekha Thomas was supported by the NSF (DMS-1115293).

\bigskip

\noindent
\footnotesize {\bf Authors' addresses:}

\noindent
Jan Draisma, TU Eindhoven, P.O. Box 513, 5600 MB Eindhoven,
The Netherlands, {\tt j.draisma@tue.nl};\\ and Centrum Wiskunde \& Informatica,
Science Park 123, 1098 XG Amsterdam, The Netherlands

\noindent Emil Horobe\c{t}, TU Eindhoven, P.O. Box 513, 5600 MB Eindhoven, The Netherlands,
{\tt e.horobet@tue.nl}

\noindent Giorgio Ottaviani, Universit\`a di Firenze, viale Morgagni 67A, 50134 Firenze,
Italy, {\tt ottavian@math.unifi.it}

\noindent Bernd Sturmfels,  University of California, Berkeley, CA 94720-3840, USA,
{\tt bernd@math.berkeley.edu}

\noindent Rekha Thomas, University of Washington, Box 354350, Seattle, WA 98195-4350, USA, {\tt rrthomas@uw.edu}

\end{document}